\newtheorem{theorem}{Theorem}[section]
\newtheorem{proposition}[theorem]{Proposition}
\newtheorem{corollary}[theorem]{Corollary}
\newtheorem{lemma}[theorem]{Lemma}
\theoremstyle{definition}
\newtheorem{example}[theorem]{Example}
\theoremstyle{remark}
\newtheorem{remark}[theorem]{Remark}
\newtheorem{question}{Question}
\numberwithin{equation}{section}
\renewcommand{\t}{ \widetilde}
\renewcommand{\hat}{ \widehat}
\renewcommand{\b}{ \partial}
\newcommand{\Z}{\mathbb Z}
\newcommand{\R}{\mathbb R}
\newcommand{\N}{\mathbb N}
\newcommand{\C}{\mathbb C}
\newcommand{\Hi}{\bf H}
\renewcommand{\S}{\bf S}
\renewcommand{\l}{\langle}
\renewcommand{\r}{\rangle}
\newcommand{\e}{\varepsilon}
\newcommand{\z}[1]{{\Z}/#1{\Z}}
\renewcommand{\o}{\overline}
\newcommand{\co}{\colon\thinspace}
\renewcommand{\epsilon}{\varepsilon}
\renewcommand{\c}{\mathcal}
\begin{document}
\sloppy

\title[]{Chern Simons Theory and the volume  of $3$-manifolds}

\author{Pierre Derbez}
\address{Universit\'e Aix-Marseille, LATP UMR 6632 de CNRS,
CMI, Technopole de Chateau-Gombert,
39, rue Fr\'ed\'eric Joliot-Curie -
 13453 Marseille Cedex 13}
\email{pderbez@gmail.com}

\author{Shicheng Wang}
\address{Department of Mathematics, Peking University, Beijing, China}
\email{wangsc@math.pku.edu.cn}


\subjclass{57M50, 51H20}
\keywords{Hyperbolic volume, Seifert volume,    flat connections, Chern-Simons classes}

\date{\today}
\maketitle

\begin{abstract}
We give some applications of the  Chern Simons gauge theory to the
study of the set ${\rm vol}\left(N,G\right)$  of  volumes  of all
representations $\rho\co\pi_1N\to G$, where $N$ is a closed oriented
three-manifold and $G$ is either ${\rm Iso}_e\t{\rm SL_2(\R)}$, the
isometry group of the Seifert geometry,  or ${\rm Iso}_+{\Hi}^3$,
the orientation preserving isometry group of the hyperbolic 3-space.
We focus on three natural questions arising from the definition of
${\rm vol}\left(N,G\right)$:

(1) How to find non-zero values in ${\rm vol}\left(N, G
 \right)$? or weakly how to find  non-zero elements in ${\rm
vol}(\t N, G)$ for some finite cover $\t N$ of $N$?

(2) Do these volumes    satisfy the  covering
property in the sense of Thurston?

(3) What kind of topological information  is enclosed in the
elements of ${\rm vol}(N, G)$?

By various methods of computations, involving relations between the
volume of representations and the Chern-Simons invariants of flat
connections, we are able to give several meaningful results related
the questions above.

We  determine ${\rm vol}\left(N, G \right)$ when $N$ supports the
Seifert geometry, and we find some non-zero values in ${\rm
vol}\left(N,G\right)$ for  certain 3-manifolds with non-trivial
geometric decomposition for either  $G={\rm Iso}_+{\Hi}^3$ or ${\rm
Iso}_e\t{\rm SL_2(\R)}$. Moreover we will show that unlike the
Gromov simplicial volume,  these non-zero elements carry
 the gluing information
between the geometric pieces of $N$.

For a large class 3-manifolds $N$, including all
rational homology 3-spheres, we prove that $N$ has a  positive Gromov simplicial volume iff it admits 
 a finite covering  $\t N$ with ${\rm vol}(\t
N,{\rm Iso}_+{\Hi}^3)\ne \{0\}$. On the other hand, among such class,
there are some $N$ with positive simplicial volume but ${\rm
vol}(N,{\rm Iso}_+{\Hi}^3)=\{0\}$, 
 yielding a negative answer to question (2) for hyperbolic volume.
\end{abstract}

 \vspace{-.5cm}
\tableofcontents

\section{Introduction}

The volume of representations of 3-manifolds groups is a beautiful
theory which has  rich connections with many branches of
mathematics. However the behavior of those volume functions seem
still quite mysterious. To make our meaning more explicit, we first
give some basic notions (which will be defined later) and properties
of the volume of representations. Let $N$ be  a closed oriented
$3$-manifold. Let $G$ be either ${\rm PSL}(2;{\C})={\rm
Iso}_+{\Hi}^3$, the orientation preserving isometry group of the
hyperbolic 3-space, or ${\rm Iso}_e\t{\rm SL_2(\R)}$, the identity
component of the isometry group of  $\t{\rm SL_2(\R)}$. For each
representation $\rho\co\pi_1N\to G$, the volume of
 $\rho$ is denoted by ${\rm vol}_G(N,\rho)$. We denote by ${\rm
vol}\left(N,G\right)$ the set
$$\{{\rm vol}_G(N,\rho),\ {\rm when}\ \rho\
{\rm runs\ among\  the\ representations}\ \rho\co\pi_1N\to G\}$$
 Suppose
$N$ supports the hyperbolic, resp. $\t{\rm SL_2(\R)}$, geometry. Then
$N$ naturally has its own hyperbolic volume ${\rm vol}_{{\Hi}^3}(N)$,
resp. Seifert volume ${\rm vol}_{\t{\rm SL_2(\R)}}(N)$. We denote by
$||N||$  the Gromov simplicial volume of $N$, which measures, up to a
multiplicative constant, the hyperbolic volume of the hyperbolic
pieces of $N$ (see \cite{G}).
The following theorem contains basic results of the theory of volume
representations. For its development, see \cite{BG1}, \cite{BG2},
\cite{Re2}, \cite{Re1} and their references.

\begin{theorem}\label{basic property of volume of presentation}
Let $N$ be a  closed oriented $3$-manifold.

(1)  Both ${\rm vol}(N, {\rm PSL}(2;{\C}))$ and ${\rm vol}(N, {\rm
Iso}_e\t{\rm SL_2(\R)})$ contain at most finitely many values and we denote by $HV(N)$ and $SV(N)$ the maximum value for ${\rm PSL}(2;{\C})$ and ${\rm
Iso}_e\t{\rm SL_2(\R)}$ respectively.

(2) Suppose  $N$ supports a hyperbolic geometry. Then ${\rm
vol}_{{\Hi}^3}(N)$ is reached by ${\rm vol}_{\rm PSL(2;{\C})}(N,\rho)$
for some discrete and faithful representation. The similar statement
is still true when $N$ supports an $\t{\rm SL_2(\R)}$-geometry.

(3) ${\rm vol}_{{\rm PSL}(2;{\C})}(N,\rho)\leq \mu_3||N||,$ where $\mu_3$ denotes the volume of any ideal regular tetrahedron in ${\Hi}^3$.

(4)  Let $f\co M\to N$ be a map of degree $d$ and let $\rho: \pi_1 N\to G$
denote a representation. Then we get a representation $\rho\circ f_{\ast}\co\pi_1M\to G$ such that ${\rm vol}_G(\rho\circ f_{\ast}, M)= d{\rm
vol}_G(\rho, N)$. Accordingly this yields  the classical inequalities $$HV(M)\geq|{\rm deg}f|HV(N)\   and\  SV(M)\geq|{\rm deg}f|SV(N)$$
\end{theorem}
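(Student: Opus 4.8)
The plan is to proceed from the standard definition of the volume of a representation as a characteristic number of the associated flat bundle. Write $G$ for either ${\rm PSL}(2;{\C})$ or ${\rm Iso}_e\t{\rm SL_2(\R)}$, let $X$ be the corresponding model space (${\Hi}^3$, resp.\ $\t{\rm SL_2(\R)}$) equipped with its $G$-invariant volume form, and let $\omega_G\in H^3_c(G;{\R})$ be the resulting volume class in continuous cohomology. For $\rho\co\pi_1N\to G$ I would set ${\rm vol}_G(N,\rho)=\langle\rho^*\omega_G,[N]\rangle$, realized concretely by picking a $\rho$-equivariant developing map $D\co\t N\to X$, pulling back the volume form, and integrating the resulting closed $3$-form over $N$. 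With this description the naturality statements (2), (3), (4) are formal, and the finiteness in (1) carries all the weight.

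For (4) I would combine the fact that a degree-$d$ map satisfies $f_*[M]=d\,[N]$ in $H_3(N;{\Z})$ with functoriality, $(\rho\circ f_*)^*\omega_G=f^*\rho^*\omega_G$, and the projection formula: $\langle f^*\rho^*\omega_G,[M]\rangle=\langle\rho^*\omega_G,f_*[M]\rangle=d\,\langle\rho^*\omega_G,[N]\rangle$, which is exactly ${\rm vol}_G(\rho\circ f_*,M)=d\,{\rm vol}_G(\rho,N)$. Choosing $\rho$ to realize $HV(N)$ (resp.\ $SV(N)$) then yields a representation of $\pi_1M$ of volume $d\cdot HV(N)$; when $d<0$ I would post-compose with an orientation-reversing outer automorphism of $G$ (complex conjugation for ${\rm PSL}(2;{\C})$), which negates $\omega_G$ and hence the volume, producing a representation of volume $|d|\,HV(N)$. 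This gives $HV(M)\ge|{\rm deg}f|\,HV(N)$ and likewise for $SV$.

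For (2) I would take $\rho_0$ to be the holonomy of the geometric structure $N=X/\Gamma$: the developing map is then the identification $\t N\cong X$, its pullback of the volume form is the metric volume form, and the integral computes the geometric volume, so ${\rm vol}_G(N,\rho_0)={\rm vol}_X(N)$ with $\rho_0$ discrete and faithful. For (3) I would use that $\omega_{{\rm PSL}(2;{\C})}$ is represented by a \emph{bounded} cocycle whose sup-norm equals the maximal volume of a straightened geodesic simplex in ${\Hi}^3$, namely $\mu_3$ (Haagerup--Munkholm, Milnor); the duality between bounded cohomology and the Gromov $\ell^1$-seminorm, together with the fact that pullback does not increase the bounded norm, then gives $|{\rm vol}_{{\rm PSL}(2;{\C})}(N,\rho)|\le\|\omega\|_\infty\,\|N\|=\mu_3\|N\|$, as in \cite{G}.

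The main obstacle is (1). Here I would first prove that ${\rm vol}_G(N,\cdot)$ is \emph{locally constant} on the representation variety ${\rm Hom}(\pi_1N,G)$: since $N$ is closed, the first variation of the volume functional under a deformation of $\rho$ vanishes (a Schl\"afli-type / symplectic computation, the boundary contributions being absent), so the continuous map $\rho\mapsto\langle\rho^*\omega_G,[N]\rangle$ is locally constant. For $G={\rm PSL}(2;{\C})$, which is algebraic, ${\rm Hom}(\pi_1N,G)$ is a real algebraic set and thus has finitely many connected components, so a locally constant function takes finitely many values. The delicate case is $G={\rm Iso}_e\t{\rm SL_2(\R)}$, which is not algebraic: here I would project to the algebraic quotient ${\rm PSL}(2;{\R})$, invoke finiteness of components there, and control the finitely many lifts to the universal cover by the discrete Euler-class data measuring the lifting obstruction, following Brooks--Goldman \cite{BG2}; boundedness from (3) then confines the possible values to a finite set. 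Establishing local constancy uniformly and handling the non-algebraic group are the crux of the argument.
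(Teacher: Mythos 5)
The paper offers no proof of Theorem \ref{basic property of volume of presentation} at all: it is stated as background, with its proof delegated to the cited literature (Brooks--Goldman \cite{BG1}, \cite{BG2} and Reznikov \cite{Re1}, \cite{Re2}), and it is explicitly absent from the list of statements the paper says it will verify. So your proposal can only be compared with those standard arguments and with the machinery the paper itself builds in Section 5. Your treatments of (2), (3) and (4) are exactly the standard arguments and are correct; note only that the paper defines ${\rm vol}_G(M,\rho)=\left|\int_M D_{\rho}^{\ast}(\omega_X)\right|$ with an absolute value, so the orientation-reversing automorphism device for negative degree in (4) is unnecessary (and the identity in (4) should really carry $|d|$).

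The genuine gaps are in (1), exactly where you place the crux. First, local constancy: the ``Schl\"afli-type, no boundary terms'' variation argument is subtler than you suggest, because the variation field $V_t$ of a family of $\rho_t$-equivariant developing maps is \emph{not} $\pi_1N$-equivariant (it satisfies only a cocycle condition, since $\rho_t$ itself moves), so $D_t^{\ast}\iota_{V_t}\omega_X$ does not descend to a form on $N$ and Stokes cannot be applied naively. The clean proof is the one this paper's own machinery supplies: by Propositions \ref{vol} and \ref{volh}, the volume is a constant multiple of (the imaginary part of) the Chern--Simons invariant of the associated flat connection, and along a path $A_t$ of flat connections the derivative of $\mathfrak{cs}$ is an integral involving $F^{A_t}=0$; this is precisely the rigidity argument the paper runs in Section 6 (the connection $\mathbb{A}$ on $\hat{Q}_i\times[0,1]$ with $F^{\mathbb{A}}\wedge F^{\mathbb{A}}=0$). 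Second, in the ${\rm Iso}_e\t{\rm SL_2(\R)}$ case two of your steps are wrong as stated. The lifts of a fixed $\bar\rho\co\pi_1N\to{\rm PSL}(2;{\R})$ through the central extension ${\R}\to{\rm Iso}_e\t{\rm SL_2(\R)}\to{\rm PSL}(2;{\R})$ are \emph{not} finite in number: they form a torsor over ${\rm Hom}(\pi_1N,{\R})\cong H^1(N;{\R})$, which is infinite whenever $b_1(N)>0$. What rescues the argument is the opposite feature: this space of lifts is \emph{connected}, so, granted local constancy, the volume is constant on all lifts lying over a fixed component of the algebraic set of liftable representations in ${\rm Hom}(\pi_1N,{\rm PSL}(2;{\R}))$, and finiteness follows from finiteness of components there; carrying this out carefully is exactly the content of \cite{BG2}, \cite{Re1}. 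Finally, your appeal to the bound in (3) for this case is both unnecessary (local constancy plus finitely many components already gives finiteness) and unavailable: there is no bound of the ${\rm Iso}_e\t{\rm SL_2(\R)}$-volume by the simplicial volume --- by part (2) of the very theorem, or by Proposition \ref{Seifert volumes of Seifert manifolds}, Seifert manifolds have $\|N\|=0$ but nonzero Seifert volume.
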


\begin{remark} Recall that a prime 3-manifold $N$ admits no  self-map of degree $>1$
if and only if either $N$ has a non-trivial geometric decomposition,
or
 supports an $\t{\rm SL_2(\R)}$ or a hyperbolic geometry.
This fact combined with  Theorem \ref{basic property of volume of
presentation} (2), (3), (4)   implies that if  ${\rm vol}\left(N,
\rm {Iso}_e\t{\rm SL_2(\R)}\right)\not=\{0\}$ then necessarily
either the geometric decomposition of $N$ is non trivial, or $N$
supports an $\t{\rm SL_2(\R)}$  or a hyperbolic geometry and if ${\rm
vol}\left( N, \rm PSL(2;{\C}) \right)\not=\{0\}$ then necessarily
$N$ contains some hyperbolic piece(s).
\end{remark}

Besides Theorem \ref{basic property of volume of presentation},
Thurston pointed out  the relation between Chern-Simon's invariants
and the hyperbolic volume  of hyperbolic 3-manifolds for discrete
and faithful representations \cite{Th2}. Later such a relation is
extended in \cite{KK} for cusped hyperbolic 3-manifolds and  discrete
and faithful representations into ${\rm PSL}(2;{\C})$, and by \cite{Kh} for closed manifolds
with the group $\t{\rm SL_2(\R)}$ (as a subgroup of ${\rm Iso}\t{\rm
SL_2(\R)}$).

Despite  those significant results, the questions below, which is
a main motivation of  this paper, seems still remarkably unknown.

\begin{question}\label{1}
 (1) (i) How to find  non-zero elements in ${\rm vol}\left(N, G
\right)$? or weakly,

(ii)  How to find  non-zero elements in ${\rm vol}(\t N, G)$
for some finite cover $\t N$ of $N$?

 (2) Does $HV$ or $SV$ satisfy
the so-called "covering property" in the sense of Thurston?

(3) What kind of topological information of  $N$ is captured by the
non-zero elements in ${\rm vol}\left(N, G \right)$?

\end{question}

We recall that a non-negative 3-manifolds invariant $\eta$  satisfies the {\it covering
property}, if for any finite covering $p: \t N \to N$, we have
$\eta(\t N)= |\text{deg}(p)|  \eta(N)$.

\begin{remark} Three-manifold invariants with the
covering property  was first addressed by Thurston  in \cite[Problem
3.16]{Ki}.  The simplicial volume has the covering property
(Gromov-Thurston-Soma). Some papers sought   invariants with
covering property for graph manifolds, as in \cite{WW}.

So far it seems that we only know that $HV$, resp. $SV$, satisfies
the covering property for the hyperbolic, resp. Seifert, manifolds.
In hyperbolic geometry this property comes from the relation between
the simplicial volume and $HV$. In Seifert geometry one can compute
$SV$ in terms of the Euler
 classes of the Seifert manifold and the Euler characteristic of its orbifold   and these
invariants behave naturally under covering maps.
\end{remark}

From now on the manifolds are assumed  closed oriented and  irreducible.

\subsection{Volumes of Seifert manifolds}

The works of Brooks-Goldman  \cite{BG2}, Milnor, Wood \cite{Mi},
\cite{Wo} and  Eisenbud-Hirsch-Neumann  \cite{EHN}, allow us to
describe the set ${\rm vol}\left(N, \rm {Iso}_e\t{\rm
SL_2(\R)}\right) $ for each 3-manifold $N$ supporting an $\t{\rm
SL_2(\R)}$-geometry.

It is known that $N$ supports an $\t{\rm SL_2(\R)}$-geometry if and
only if  $N$ is a Seifert manifold with non-zero Euler number $e(N)$
over an  orbifold of negative Euler characteristic. As in \cite{EHN} we use
$\llcorner{a }\lrcorner$ and  $\ulcorner a\urcorner$ for $a\in{\R}$
to denote respectively, the greatest integer $\le a$ and the least
integer $\ge a$.

\begin{proposition}\label{Seifert volumes of Seifert manifolds}
Suppose $N$ supports the $\t{\rm SL_2(\R)}$-geometry and that its
base $2$-orbifold has a positive genus $g$. Then
\begin{eqnarray}
{\rm vol}\left(N, \rm {Iso}\t{\rm SL_2(\R)})
\right)=\left\{\frac{4\pi^2}{|e(N)|}\left(\sum_{i=1}^r\left(\frac{n_i}{a_i}\right)-n\right)^2\right\}
\end{eqnarray}
where $n_1,...,n_r,n$ are integers such that $$\sum_{i=1}^r
\llcorner{ {n_i}/{a_i}}\lrcorner -n\le 2g-2, \,\,\, \sum_{i=1}^r
\ulcorner{n_i}/{a_i}\urcorner-n \ge 2-2g$$ and $a_1, ..., a_r$ are
 the indices of the singular points of the orbifold of $N$.
\end{proposition}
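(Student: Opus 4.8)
The plan is to compute the volume of representations into $\mathrm{Iso}_e\widetilde{\mathrm{SL}_2(\mathbb{R})}$ by identifying it with the Euler number of an associated flat bundle, and then to determine exactly which Euler numbers are realizable for a Seifert manifold over a base orbifold of genus $g$. Let me sketch the approach.

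First I would set up the relationship between the volume and the Euler class. For a Seifert manifold $N$ fibering over a hyperbolic $2$-orbifold, a representation $\rho\co\pi_1 N\to \mathrm{Iso}_e\widetilde{\mathrm{SL}_2(\mathbb{R})}$ essentially factors through the $\widetilde{\mathrm{SL}_2(\mathbb{R})}$-structure, and by the work of Brooks-Goldman the volume $\mathrm{vol}_G(N,\rho)$ is proportional to the Euler number $e(\rho)$ of the induced flat circle bundle, with a normalizing constant involving the area $4\pi^2\chi$ of the base orbifold. The key point is that $\rho$ sends the fiber class to a central element, so the representation descends to the orbifold fundamental group $\pi_1^{orb}(\Sigma)$ up to the central ambiguity, and the Euler number of such a representation is computed as $\sum_i (n_i/a_i) - n$ after choosing lifts of the generators. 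Squaring and dividing by $|e(N)|$ with the factor $4\pi^2$ then gives the displayed formula; this normalization I would extract directly from the cited computations in \cite{BG2} and \cite{Kh}.

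Second, and this is where the real content lies, I would determine the range of admissible values $\sum_i(n_i/a_i)-n$. This is exactly the Milnor-Wood inequality for flat $\widetilde{\mathrm{SL}_2(\mathbb{R})}$-bundles (equivalently flat $\mathrm{Homeo}_+(S^1)$-bundles) over a surface-with-boundary picture coming from the orbifold. The sharp existence-and-nonexistence statement for the Euler number is precisely the Eisenbud-Hirsch-Neumann theorem \cite{EHN}: a representation with prescribed rotation-number data on the singular fibers exists if and only if the Euler number lies in the interval cut out by the two inequalities $\sum_i\llcorner n_i/a_i\lrcorner - n \le 2g-2$ and $\sum_i\ulcorner n_i/a_i\urcorner - n \ge 2-2g$. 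So the plan is to translate each representation into the EHN data $(n_1,\dots,n_r,n)$ recording the rotation numbers of the generators and the Euler cocycle, and then invoke \cite{EHN} to characterize realizability. The floor/ceiling inequalities arise because the boundary contribution of each cone point of order $a_i$ can be adjusted by an integer only up to the fractional part $n_i/a_i$.

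The main obstacle I anticipate is the bookkeeping needed to show the correspondence is a bijection onto the stated set, in both directions. For the inclusion $\supseteq$ I must verify that every integer tuple satisfying the two EHN inequalities genuinely arises from some honest representation into $\mathrm{Iso}_e\widetilde{\mathrm{SL}_2(\mathbb{R})}$ (not merely into $\mathrm{Homeo}_+(S^1)$), which requires checking that the lift can be taken inside the isometry group and that the central/fiber relations of the Seifert presentation are respected; the genus-$g$ hypothesis $g>0$ is what gives enough free generators $a_j,b_j$ to absorb the required Euler cocycle via commutators. For the inclusion $\subseteq$ I must show no other values occur, which is the nonexistence half of Milnor-Wood-EHN. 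I would organize the proof by writing down the standard Seifert presentation of $\pi_1 N$, recording how $\rho$ acts on each generator, and carefully matching the translation/rotation numbers to the integers $n_i$ and $n$; the delicate part is handling the rounding at the singular points correctly so that the extreme admissible tuples match the floor and ceiling bounds exactly.
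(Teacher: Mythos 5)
Your skeleton coincides with the paper's own proof (Proposition 4.2 there): write down the Seifert presentation, use that a representation of nonzero volume must send the fiber $h$ to the center of $\mathrm{Iso}_e\widetilde{\mathrm{SL}_2(\mathbb{R})}$, convert the relations $s_i^{a_i}h^{b_i}=1$ and the surface relation into the integer data $(n_1,\dots,n_r,n)$, and invoke Eisenbud--Hirsch--Neumann in both directions to obtain exactly the floor/ceiling inequalities; your remarks on realizability (genus $g>0$ supplying commutators) and on the nonexistence half also match the paper.

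The genuine gap is the quantitative step, which you compress into ``Squaring and dividing by $|e(N)|$ \dots\ I would extract directly from \cite{BG2} and \cite{Kh}.'' First, the relation you state beforehand is wrong: the volume is \emph{not} proportional to the Euler number of the induced flat circle bundle, and no constant $4\pi^2\chi$ of the base orbifold enters; the correct relation is quadratic, $\mathrm{vol}(N,\rho)=\frac{4\pi^2}{|e(N)|}\bigl(\sum_i n_i/a_i-n\bigr)^2$. The linear Brooks--Goldman formula $\mathrm{vol}=4\pi^2\tilde e$ (Propositions 3.1 and 3.4 of the paper, via Godbillon--Vey) is valid only when the manifold is an honest circle bundle and the representation sends the fiber to $\mathrm{sh}(1)$; here the fiber goes to $\overline{(\zeta,1)}$ with $\zeta=\frac 1e\bigl(\sum_i n_i/a_i-n\bigr)$ generally non-integral, so that formula cannot be applied to $N$ itself. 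Second, the formula for arbitrary (non-discrete, non-faithful) representations is not sitting in the literature waiting to be quoted: \cite{BG2} computes the geometric case $4\pi^2\chi_{O(N)}^2/|e(N)|$, and \cite{Kh} reaches its formulas through a long Chern--Simons computation, not a statement in this generality. What fills this hole in the paper is a tower of fiberwise coverings: pass to a cover $p_1\co\widetilde N\to N$ that is a circle bundle over a surface with fiber degree one, where the Euler-number relation $\widetilde t^{\,\widetilde e}=\prod_j[\widetilde\alpha_j,\widetilde\beta_j]$ forces $\widetilde e\,\zeta=\widetilde n\in\mathbb{Z}$; take the degree-$\widetilde e$ cover $\hat N\to\widetilde N$ along the fiber, so the representation lands in $\widetilde{\mathrm{SL}_2(\mathbb{R})}$ with fiber $\mapsto\mathrm{sh}(\widetilde n)$; then descend along a degree-$\widetilde n$ fiberwise covering $\hat N\to N^{*}$ so that the fiber of $N^{*}$ maps to $\mathrm{sh}(1)$, where EHN plus Brooks--Goldman finally give $\mathrm{vol}(N^{*},\rho^{*})=4\pi^2\widetilde n$; multiplicativity of volume under coverings then yields $\mathrm{vol}(N,\rho)=4\pi^2|e|\zeta^2=\frac{4\pi^2}{|e|}\bigl(\sum_i n_i/a_i-n\bigr)^2$. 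Without this reduction, or an equivalent argument, your plan identifies the admissible tuples but does not compute the volumes they produce.
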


\begin{remark} (1) In order to check Proposition \ref{Seifert volumes of Seifert
manifolds},  we will describe all representations with non-zero
volume (see Proposition \ref{SvSm}). They will be
 used in the  volume computations for 3-manifolds with
non-trivial geometric decompositions.

(2) Proposition \ref{Seifert volumes of Seifert manifolds} presents
explicitly the rationality of the elements in ${\rm vol}(N, \rm
{Iso}\t{\rm SL_2(\R)})$, which was proved in \cite{Re1}.
\end{remark}

\subsection{Volumes of non-geometric manifolds}

As a partial answer to Question 1 (1) for non-geometric manifolds,
it was known only recently that each non-trivial graph manifold $N$
has a finite cover $\t N$ such that ${\rm vol}(\t N, {\rm
Iso}_e\t{\rm SL_2(\R)})$ contains non-zero elements, see \cite{DW2}.
Thus Question 1 (1) (ii) is  reduced to the non-geometric
3-manifolds containing some hyperbolic pieces. In view of Theorem
\ref{basic property of volume of presentation} (2) (3), as well as
the result of \cite{DW2}, and in an attempt to seal a relation
between the \emph{Gromov simplicial volume} and the \emph{hyperbolic
volume}, Professor M. Boileau and some others wondered  the
following more direct version of Question 1 (1):

\begin{question}
Suppose $N$ has positive simplicial volume, i.e., $N$ contain some
hyperbolic geometric piece(s).

(i) Is there a representation $\rho\co\pi_1  N\to{\rm PSL}(2;{\C})$
with positive volume?

(ii) or weakly is there a representation $\rho\co\pi_1 \t N\to{\rm
PSL}(2;{\C})$ with positive volume for some finite covering $\t N$
of $N$?
\end{question}

Let $N$ be a closed irreducible non-geometric  $3$-manifold. Let $\c{T}_N\subset N$
denote the minimal union  of disjoint essential tori and
Klein bottles of $N$, unique up to isotopy, such that each piece of
$N\setminus\c{T}_N$ is either Seifert  or hyperblic (see
section 2.1). We say that $N$ is an \emph{one-edged manifold} if
$\c{T}_N$ consists of a single separating torus $T$.

 The next two propositions respectively
gives a  negative answer to Question 2 (i) and a partially positive
answer to Question 2 (ii).

\begin{proposition}\label{hyper-virt-nonzero} Let $N$ be a 3-manifold containing a hyperbolic piece $Q$ whose  each boundary component that is non-separating  in $N$ is shared by a Seifert piece of $N$.
Then there is a representation $\rho\co\pi_1\t N\to{\rm
PSL}(2;{\C})$ with positive volume for some finite covering $\t N$
of $N$.
\end{proposition}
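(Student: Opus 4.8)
The plan is to reduce the problem to the single hyperbolic piece $Q$ and then, in a carefully chosen finite cover, to realize a representation whose restriction to a lift of $Q$ is the holonomy of a hyperbolic Dehn filling. First I would invoke the additivity of the volume of representations over the torus decomposition $\c{T}_N$: for any $\rho\co\pi_1N\to{\rm PSL}(2;{\C})$ one has
$${\rm vol}_{{\rm PSL}(2;{\C})}(N,\rho)=\sum_{Y}{\rm vol}\left(Y,\rho|_{\pi_1Y}\right),$$
the sum running over the pieces $Y$ of $N\setminus\c{T}_N$, each term being a relative volume that is well defined because the restriction to a torus carries no volume. Every Seifert piece is a graph–manifold piece of vanishing Gromov norm, so its contribution is zero and the total volume equals the sum of the relative volumes of the hyperbolic pieces. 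Hence it suffices to produce, on some finite cover $\t N\to N$, a representation whose restriction to a component $\t Q$ of the preimage of $Q$ has positive volume.

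Next I would build such a representation by Dehn filling all boundary tori of $\t Q$. On each torus shared with a Seifert piece $Y$ I fill along the slope $h$ given by the regular fiber of $Y$; the remaining boundary tori are, by hypothesis, separating in $N$, and there I fill along a slope that dies homologically on the complementary side. A representation of $\pi_1\t Q$ killing all these slopes descends from the holonomy of a hyperbolic structure on the closed filled manifold $\widehat{\t Q}$, and its relative volume on $\t Q$ equals ${\rm vol}(\widehat{\t Q})$, hence is positive once the filling is hyperbolic. The filling need not be hyperbolic for the given slopes, and this is precisely where the cover enters: I would choose $\t N\to N$ so that on each relevant cusp the filling slope unwraps (the cover is vertical of large degree on the adjacent piece), which multiplies the normalized length of the lifted slope. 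By taking the degree large and applying Thurston's hyperbolic Dehn surgery theorem, the filling $\widehat{\t Q}$ is hyperbolic with volume close to ${\rm vol}(\t Q)>0$. This yields $\rho_{\t Q}\co\pi_1\t Q\to{\rm PSL}(2;{\C})$ of positive volume sending every filling slope to the identity.

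I would then extend $\rho_{\t Q}$ over the complementary pieces. Over a lifted Seifert piece $\t Y$, whose regular fiber $\t h$ is central in $\pi_1\t Y$, the restriction of $\rho_{\t Q}$ to the shared torus kills $\t h$, so I look for an extension sending $\t h$ to the identity; such an extension factors through the orbifold group $\pi_1^{\mathrm{orb}}$ of the base $2$-orbifold, and realizing the prescribed boundary holonomies (the core geodesics of the fillings, together with values propagated from neighbouring pieces) reduces to solving the orbifold relation, which is possible once the base has enough genus since in ${\rm PSL}(2;{\C})$ every element is a product of commutators and cone-point generators can be sent to elliptics of the correct order. Across a separating torus the filling slope was chosen to die on the far side, so there $\rho_{\t Q}$ restricts to a homomorphism that extends by a solvable (translation-valued) representation through the first homology of that side; the separation localizes this matching to the single torus and makes it unobstructed. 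It is exactly here that the hypothesis is used: forcing every non-separating boundary torus of $Q$ to abut a Seifert piece guarantees that those tori can be fiber-filled and extended over the orbifold bases, while the only tori left to the solvable treatment are separating ones.

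The main obstacle is the coherent choice of a single finite cover $\t N\to N$ of the closed manifold that simultaneously (i) unwraps every filling slope enough to make all fillings of $\t Q$ hyperbolic, (ii) enlarges the base $2$-orbifolds of the adjacent Seifert pieces enough to solve the boundary equations, and (iii) keeps the boundary data consistent as it propagates through the whole graph of pieces, so that the locally defined extensions glue to a genuine $\rho\co\pi_1\t N\to{\rm PSL}(2;{\C})$. Assembling one cover meeting (i)--(iii) at once, rather than separate covers adapted to each piece, is the crux; I would attack it using residual finiteness of $\pi_1N$ together with the constructions of compatible vertical covers of Seifert and graph pieces in the spirit of \cite{DW2}, choosing the degrees uniformly large so that both the Dehn surgery estimates and the genus requirements hold on every piece simultaneously.
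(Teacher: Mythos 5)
Your overall architecture --- fill the boundary of $Q$ along Seifert fibers on the non-separating tori and along homologically torsion slopes on the separating ones, pass to a characteristic cover so that Thurston's Dehn surgery theorem applies, extend over the complement by fiber-killing (commutator-built) and homological representations, then sum up --- is the paper's. But your very first step is a genuine gap, and not a detail: you invoke an additivity ${\rm vol}(N,\rho)=\sum_Y{\rm vol}(Y,\rho|_{\pi_1Y})$ over the pieces of $N\setminus\c{T}_N$, with Seifert pieces contributing zero ``by Gromov norm''. No such additivity holds: the paper stresses in its introduction that the volume of representations is \emph{not} additive with respect to the geometric decomposition, there is no well-defined relative volume of a piece entering such a formula, and the bound ${\rm vol}\leq\mu_3\|N\|$ is only available for closed manifolds. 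What does exist is the additivity principle (5.17): \emph{when the representation kills a slope $c$ on a decomposition torus}, the flat connections extend over the two Dehn fillings and ${\rm vol}_G(M,\rho)={\rm vol}_G(M_1(c),\hat{\rho}_1)+{\rm vol}_G(M_2(c),\hat{\rho}_2)$, a sum over \emph{closed filled} manifolds. With the correct statement you cannot wave away the complementary side: you must prove that the filled components of $N\setminus Q$, equipped with your extensions, have zero volume. This is substantive, because those components need not be Seifert --- they may contain further hyperbolic pieces --- so vanishing is a property of the chosen representations, not of the pieces. The paper proves it twice: for the abelian extension across a separating torus, by a path of flat connections joining it to the trivial one (the connection $\mathbb{A}$ on $\hat{Q}_i\times[0,1]$ satisfies $F^{\mathbb{A}}\wedge F^{\mathbb{A}}=0$, so $\mathfrak{cs}^*$ is constant along the path); and for the fiber-killing extension, by showing it factors through a crunched complex $\t{Q}_j/\c{R}$ with $H^3(\t{Q}_j/\c{R})=0$. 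Your proposal contains neither argument.

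The second gap is the matching across separating tori. You make the fillings of $\t{Q}$ hyperbolic by lengthening the lifted slopes and applying Thurston's theorem upstairs; but then the holonomies of the filled structure at the different cusps of $\t{Q}$ lying over a single torus $T_i=\b Q_i$ are unrelated complex numbers, whereas your extension on the far side is a single character through $H_1$ of that side, which (after pushing forward to $H_1(Q_i;\Z)$, or because of homological relations among the $\lambda_i^k,\mu_i^k$ in the cover) cannot take independently prescribed values on all the lifted longitudes $\lambda_i^k$. The paper's Lemma 6.2 is built precisely to avoid this: it fills the base manifold $Q$ with the \emph{non-primitive} coefficients $(qa_i,qb_i)$, so that Theorem 2.2 produces a hyperbolic \emph{orbifold}, and pulls the orbifold holonomy back through the $q\times q$-characteristic cover; then every lifted longitude is sent to a conjugate of one and the same element $x_i$, and the far-side representation can be defined through $H_1(Q_i;\Z)\to\langle e_i^1\rangle$ by choosing $\xi_i$ with $\xi_i^{qk_i}=x_i$. (Alternatively you would need a regular cover plus Mostow rigidity to force equivariance of the filled structure, but you would have to arrange and state this.) Without one of these devices your locally defined representations do not glue into a representation of $\pi_1\t{N}$ whose volume you can compute.
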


If each component of $\c{T}_N$  is separating in $N$, then the condition in
Proposition \ref{hyper-virt-nonzero} is automatically satisfied, and
in particular

 \begin{corollary}
A rational homology sphere has a positive simplicial volume iff it admits  a
finite covering with positive hyperbolic volume.
 \end{corollary}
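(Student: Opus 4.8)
The plan is to prove the two implications separately: the forward one is where Proposition~\ref{hyper-virt-nonzero} does all the work, while the backward one is formal and needs no hypothesis on $H_1$.

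For the backward implication I would argue as follows. Suppose a finite covering $p\co\t N\to N$ of degree $d$ carries a representation $\rho\co\pi_1\t N\to{\rm PSL}(2;{\C})$ with ${\rm vol}_{{\rm PSL}(2;{\C})}(\t N,\rho)>0$. By Theorem~\ref{basic property of volume of presentation}(3) we have $\mu_3\|\t N\|\ge{\rm vol}_{{\rm PSL}(2;{\C})}(\t N,\rho)>0$, so $\|\t N\|>0$; since the Gromov simplicial volume is multiplicative under finite coverings, $\|\t N\|=d\,\|N\|$, whence $\|N\|>0$. Thus the existence of a finite cover with a positive-volume representation already forces positive simplicial volume, and here the rational-homology-sphere assumption is not used.

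For the forward implication I would start from a rational homology sphere $N$ with $\|N\|>0$. Positivity of the simplicial volume forces the geometric decomposition of $N$ to contain at least one hyperbolic piece $Q$, and each component of $\b Q$ is a torus lying in $\c{T}_N$. The key step is to show that every such torus separates $N$. For this I would compute $H_2(N;\Z)$: since $N$ is a rational homology sphere, $b_1(N)=0$, and for a closed oriented $3$-manifold Poincar\'e duality together with the universal coefficient theorem gives $H_2(N;\Z)\cong H^1(N;\Z)\cong{\rm Hom}(H_1(N;\Z),\Z)$, which is free of rank $b_1(N)=0$; hence $H_2(N;\Z)=0$. Consequently every embedded closed oriented surface in $N$ is null-homologous, and a connected null-homologous surface must separate, since a loop meeting it with nonzero algebraic intersection number would pair nontrivially with it under the intersection pairing $H_1\times H_2\to\Z$. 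In particular every torus component of $\b Q$ separates $N$.

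Granting this separation claim, no boundary component of $Q$ is non-separating in $N$, so the hypothesis of Proposition~\ref{hyper-virt-nonzero}—that each non-separating boundary torus of $Q$ be shared with a Seifert piece—holds vacuously, which is precisely the observation that "each component of $\c{T}_N$ separating $\Rightarrow$ the condition of Proposition~\ref{hyper-virt-nonzero} is automatic," specialized to the homologically trivial setting. Proposition~\ref{hyper-virt-nonzero} then produces a finite covering $\t N\to N$ with a representation $\pi_1\t N\to{\rm PSL}(2;{\C})$ of positive volume, i.e. ${\rm vol}(\t N,{\rm PSL}(2;{\C}))\neq\{0\}$, completing the argument. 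The only genuine obstacle is verifying the hypothesis of Proposition~\ref{hyper-virt-nonzero}, which for rational homology spheres collapses to the vanishing $H_2(N;\Z)=0$; all the real difficulty has already been absorbed into the proof of that proposition.
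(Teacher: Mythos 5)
Your proof is correct and follows essentially the same route as the paper: the corollary is deduced from Proposition~\ref{hyper-virt-nonzero} via the observation that in a rational homology sphere every component of $\c{T}_N$ (hence every boundary torus of the hyperbolic piece) separates, so the hypothesis of that proposition holds vacuously. You merely make explicit two details the paper leaves implicit, namely the homological argument ($H_2(N;\Z)=0$ forces embedded closed oriented surfaces to separate) and the easy converse via Theorem~\ref{basic property of volume of presentation}(3) together with the covering property of the simplicial volume.
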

In the opposite direction we state:

\begin{proposition}\label{hyper-zero} There are  infinitely many 1-edged  3-manifolds
$N$   with non-vanishing $||N||$ but ${\rm vol}(N,{\rm
PSL}(2;{\C}))=\{0\}$.
\end{proposition}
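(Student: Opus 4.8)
The plan is to build the examples as unions $N = Q \cup_T W$ of a single cusped hyperbolic piece $Q$ and a single Seifert piece $W$ along one separating torus $T$, and to force every representation to have vanishing volume by a case analysis on the image of the Seifert fibre. Concretely, I would take $Q$ to be the figure-eight knot complement and $W = F_g \times S^1$, where $F_g$ is the once-punctured genus-$g$ surface, glued by a homeomorphism $\b W \to \b Q$ carrying the fibre class $h$ to the meridian $\mu$ and the section class $s=\b F_g$ to the longitude $\lambda$. Since $\mu,\lambda$ form a basis of $H_1(\b Q)$, such a gluing exists, and letting $g$ vary produces infinitely many pairwise non-homeomorphic manifolds. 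First I would verify the topology: $T$ is incompressible on both sides, $W$ is Seifert while $Q$ is atoroidal and non-Seifert, so $\c{T}_N=\{T\}$ and $N$ is closed, oriented, irreducible and one-edged; moreover, by additivity of the Gromov norm over the JSJ decomposition, $\|N\| = \|Q\| = 2 > 0$.

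Next I would reduce the vanishing of all volumes to the hyperbolic piece. The volume of a representation is additive along the essential torus $T$, because $\pi_1 T \cong {\Z}^2$ is amenable, so its bounded cohomology vanishes and the relative volumes of the two pieces are well defined; this gives ${\rm vol}(N,\rho) = {\rm vol}(Q,\rho|_Q) + {\rm vol}(W,\rho|_W)$. The Seifert piece $W$ has zero relative simplicial volume, so the relative form of the inequality in Theorem \ref{basic property of volume of presentation}(3) forces ${\rm vol}(W,\rho|_W)=0$; hence it suffices to prove ${\rm vol}(Q,\rho|_Q)=0$ for every $\rho\co\pi_1 N\to{\rm PSL}(2;{\C})$.

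The core of the argument is a trichotomy governed by $\rho(h)$, using that $h$ is central in $\pi_1 W$. In case (i), $\rho(h)=1$, so $\rho|_Q$ kills $\mu$ and factors through $\pi_1(Q(\mu))=\pi_1 S^3$. In case (ii), $\rho(h)\neq 1$ and $\rho(h)$ is not an involution; then its centralizer in ${\rm PSL}(2;{\C})$ is abelian, so $\rho(\pi_1 W)$ is abelian, and since $s=\b F_g$ is null-homologous in $W$ we get $\rho(s)=1$, whence $\rho|_Q$ kills $\lambda$ and factors through $\pi_1(Q(\lambda))$, the $0$-surgery, which is a Sol torus bundle. In both cases the quotient manifold has zero simplicial volume, so the relative inequality yields ${\rm vol}(Q,\rho|_Q)=0$.

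The main obstacle is case (iii), where $\rho(h)$ is an involution: here the centralizer of $\rho(h)$ is the non-abelian normalizer of a maximal torus, so the homological argument of (ii) breaks down. I would instead exploit that $\rho(\mu)=\rho(h)$ is an involution, so $\rho|_Q$ factors through the order-two orbifold filling $\pi_1^{\mathrm{orb}}(Q(\mu;2))$; for the figure-eight this orbifold has cone angle $\pi$, exceeding the hyperbolicity threshold $2\pi/3$, and its double branched cover is the lens space $L(5,2)$, so $Q(\mu;2)$ is spherical with zero orbifold simplicial volume. The orbifold form of the inequality gives ${\rm vol}(Q,\rho|_Q)=0$ once more. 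Combining the three cases shows ${\rm vol}(N,{\rm PSL}(2;{\C}))=\{0\}$ while $\|N\|>0$. I would close by remarking that the same scheme applies to any cusped hyperbolic $Q$ admitting two non-hyperbolic filling slopes that form a homology basis of $\b Q$ and whose relevant order-two orbifold filling is non-hyperbolic, giving ample freedom to realize the infinite family.
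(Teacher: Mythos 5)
Your construction and overall strategy coincide with the paper's own proof. The paper also takes $N=M_1\cup_\varphi M_2$ with $M_1=F\times{\S}^1$ a product Seifert piece and $M_2$ a one-cusped hyperbolic manifold, glued so that the section $s$ and the fiber $h$ are identified with a basis $\mu,\lambda^{-1}$ of $H_1(\b M_2;\Z)$ along which both fillings have zero simplicial volume; it then argues that every representation kills a peripheral curve and concludes via the Chern--Simons additivity principle (5.17) together with Theorem \ref{basic property of volume of presentation}(3). The differences in the construction are cosmetic: the paper gets infinitely many examples from the Hoffman--Matignon manifolds \cite{HM} (both fillings are connected sums of lens spaces) by varying the hyperbolic piece, while you fix the figure-eight complement (fillings $S^3$ and a Sol torus bundle) and vary the Seifert genus; both work. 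The genuinely interesting difference is that your trichotomy is more careful than the paper's dichotomy: the paper asserts that the centralizer of any nontrivial element of ${\rm PSL}(2;\C)$ is abelian, which is false for involutions (the centralizer of the image of ${\rm diag}(i,-i)$ contains the Weyl element and is $\C^{\ast}\rtimes\Z/2$), so the published claim that either $\rho(s)$ or $\rho(h)$ is trivial is unjustified precisely in your case (iii). You have identified, and set out to repair, a real gap in the paper's own argument.

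The weak point of your write-up is that you settle the cases with machinery the paper does not possess and you do not prove: a well-defined relative volume ${\rm vol}(Q,\rho|_Q)$ for manifolds with boundary, additivity over amenable tori via bounded cohomology, and ``relative''/``orbifold'' forms of the inequality in Theorem \ref{basic property of volume of presentation}(3). Such results exist in the literature but are substantial theorems; inside this paper the only additivity available is (5.17), which requires a primitive slope $c\subset T$ with $[c]\in\ker\rho$ and then expresses ${\rm vol}(N,\rho)$ as a sum of volumes of representations of \emph{closed} filled manifolds. Your cases (i) and (ii) fit this format verbatim, the killed slopes being $h=\mu$ and $s=\lambda$ respectively. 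Your case (iii) can also be closed this way, avoiding orbifold volume inequalities altogether: since $\rho(\mu)^2=1$, the restriction $\rho|\pi_1Q$ factors through the $\pi$-orbifold group of the figure-eight knot, which is dihedral of order $10$; since $\lambda$ is null-homologous in $Q$, the element $\rho(\lambda)$ lies in the image of its commutator subgroup $\Z/5$, so $\rho(\lambda)^5=\rho(\mu)^2=1$ and the \emph{primitive} slope $5\lambda+2\mu=5s+2h$ is killed by $\rho$. Then (5.17) gives ${\rm vol}(N,\rho)={\rm vol}\bigl(Q(5\lambda+2\mu),\hat\rho_+\bigr)+{\rm vol}\bigl(W(5s+2h),\hat\rho_-\bigr)$; the second term vanishes by Theorem \ref{basic property of volume of presentation}(3) because the filled manifold is Seifert, and the first vanishes because $\hat\rho_+$ has finite image: pulling back to the finite covering determined by $\ker\hat\rho_+$, Theorem \ref{basic property of volume of presentation}(4) shows $|{\rm im}\,\hat\rho_+|\cdot{\rm vol}\bigl(Q(5\lambda+2\mu),\hat\rho_+\bigr)$ equals the volume of the trivial representation of that covering, which is $0$. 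With this substitution your proof is complete and uses only the paper's toolkit.
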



By the definition of 1-edged manifold, another immediate consequence
of Proposition \ref{hyper-virt-nonzero} and Proposition
\ref{hyper-zero} is a negative answer of Question \ref{1} (2) for
hyperbolic volume, that is to say:

\begin{corollary}
The hyperbolic volume do not have the covering property. Namely
there are finite coverings $p\co \t N\to N$ such that $HV(\t
N)>|{\rm deg}p|HV(N)=0$.
 \end{corollary}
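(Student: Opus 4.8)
The plan is to obtain the required coverings by feeding the $1$-edged manifolds manufactured in Proposition \ref{hyper-zero} into Proposition \ref{hyper-virt-nonzero}; the entire content of the corollary is the observation that a single manifold can be arranged to satisfy the hypotheses of both propositions simultaneously. First I would fix any one of the infinitely many $1$-edged manifolds $N$ supplied by Proposition \ref{hyper-zero}, so that $\|N\|\neq 0$ while ${\rm vol}(N,{\rm PSL}(2;{\C}))=\{0\}$. In particular $HV(N)=0$, and therefore $|{\rm deg}\,p|\,HV(N)=0$ for \emph{every} finite covering $p\co\t N\to N$, which already pins down the right-hand side of the claimed inequality.

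The substantive step is to verify that this $N$ meets the hypothesis of Proposition \ref{hyper-virt-nonzero}. Since $\|N\|\neq 0$, the geometric decomposition of $N$ contains at least one hyperbolic piece $Q$ (recall that $\|N\|$ measures, up to a constant, exactly the hyperbolic volume of the hyperbolic pieces). Because $N$ is $1$-edged, the family $\c{T}_N$ reduces to a single \emph{separating} torus $T$, and cutting along $T$ leaves $\partial Q = T$. Consequently $Q$ has no boundary component that is non-separating in $N$, so the requirement of Proposition \ref{hyper-virt-nonzero}---that each non-separating boundary torus of the hyperbolic piece be shared with a Seifert piece---holds \emph{vacuously}. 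This compatibility check is the only place where real care is needed, and it is precisely where the separating condition built into the definition of a $1$-edged manifold is used; it is what the authors mean by ``by the definition of $1$-edged manifold.''

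With the hypothesis in hand, Proposition \ref{hyper-virt-nonzero} produces a finite covering $p\co \t N\to N$ together with a representation $\rho\co\pi_1\t N\to{\rm PSL}(2;{\C})$ of positive volume, whence
\[
HV(\t N)\ \ge\ {\rm vol}_{{\rm PSL}(2;{\C})}(\t N,\rho)\ >\ 0\ =\ |{\rm deg}\,p|\,HV(N),
\]
which is exactly the asserted strict inequality; were $HV$ to satisfy the covering property we would instead force $HV(\t N)=|{\rm deg}\,p|\,HV(N)=0$, a contradiction. Since Proposition \ref{hyper-zero} yields infinitely many such $N$ and each produces a covering of this kind, the corollary follows. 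I do not expect any genuine obstacle beyond the bookkeeping above: all of the analytic and geometric work resides inside the two cited propositions, and this corollary is simply their formal juxtaposition.
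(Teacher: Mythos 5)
Your proposal is correct and takes essentially the same route as the paper: the authors also obtain this corollary as an immediate juxtaposition of Propositions \ref{hyper-virt-nonzero} and \ref{hyper-zero}, using exactly the observation you single out, namely that the single torus of a $1$-edged manifold is separating, so the non-separating-boundary hypothesis of Proposition \ref{hyper-virt-nonzero} holds vacuously. Your additional bookkeeping (that $HV(N)=0$ kills the right-hand side for every covering, while the representation of positive volume forces $HV(\t N)>0$) is precisely what the paper leaves implicit.
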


\subsection{Volumes as Chern Simons invariants}
The difficulty of Question \ref{1} more or less can be seen from the
definition: to get a non-zero element in ${\rm vol}\left(N, G
\right)$ we need first to find an \emph{a priori} "significant"
representation $\rho\co\pi_1N\to G$, and then to be able to compute
its volume. Usually non of those steps are easy, especially when the
manifold is not geometric. Basically  in the geometric case, there
is a natural significant representation given by the faithful and
discrete representation of its fundamental group in the Lie group of
its geometry.    In the non-geometric case one can use the geometry
of its  pieces and try to construct "components after components" a
global significant representation. However in this new situation
many problems occur: First the geometric pieces have non-empty
boundary and the volume of representation is not easy to manipulate
in the non-closed case and moreover we must make sure that the local
representations are compatible in the toral boundaries in order to
be glued together.    Then an other problem arises when we want to
compute the volume of a global representation from the local
volumes. Unlike the Gromov simplicial volume it is certainly
non-additive with respect to  the geometric decomposition. This
latter point is a difficulty but somehow it is also a chance that
this volume would take into account the way the geometric pieces are
glued together.   In order to prove Propositions \ref{hyper-zero}
and \ref{hyper-virt-nonzero}, as well as Propositions
\ref{Seifert-Seifert} and \ref{Seifert-hyperbolic} (see paragraph 1.4), the volume
of representations will be turned into Chern Simons invariants, with
certain semi-simple (non-compact) Lie groups, yielding computations
that were not easy before.

Denote by $G$ the semi-simple Lie group ${\rm Iso}_e\t{{\rm
SL}_2(\R)}$ or ${\rm PSL}(2,{\C})$ with the associated Riemannian
homogeneous spaces $X$ which  is $\t{{\rm SL}_2(\R)}$ or ${\Hi}^3$
endowed with the closed $G$-invariant volume form $\omega_X$.

Denote by $\mathfrak{g}$ its Lie algebra. We recall (see Section 5
for more details) that the Chern Simons classes with structure group
${\rm PSL}(2,{\C})$ are based on the first Pontrjagin class and in
the same way we define the Chern Simons classes with structure group
${\rm Iso}_e\t{{\rm SL}_2(\R)}$  based on the invariant polynomial
defined by ${\bf R}(A\otimes A)={\rm Tr}(X^2)+t^2$ where $A$ is an
element of the Lie algebra of ${\rm Iso}_e\t{{\rm SL}_2(\R)}$ which
decomposes into $X+t$ where $X$ is in the Lie algebra of $\t{{\rm
SL}_2(\R)}$ and $t\in{\R}$.

\begin{proposition}\label{vol} Let $\rho$ be a  representation of $\pi_1N$ into $G={\rm Iso}_e\t{{\rm SL}_2(\R)}$
and let   $A$ be a  corresponding flat $G$-connection in the
principal bundle $P=N\times_\rho G$. If $P$ admits a section
$\delta$ over $N$ then
\begin{eqnarray}
\mathfrak{cs}_N(A,\delta)=\int_N\delta^{\ast}{\bf R}\left(dA\wedge
A+\frac{1}{3}A\wedge[A,A]\right)=\frac{2}{3}{\rm vol}_G(N,\rho)
\end{eqnarray}
In particular the Chern-Simons invariant of flat ${\rm Iso}_e\t{{\rm
SL}_2(\R)}$-connections is gauge invariant.
\end{proposition}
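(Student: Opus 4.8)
The plan is to express both quantities as integrals over $N$ of pullbacks, via $\delta$, of canonical forms on $G$, and then to match them after reducing the Chern--Simons integrand by flatness. First I would translate the data: writing $P=(\t N\times G)/\pi_1N$ with $\gamma\cdot(x,g)=(\gamma x,\rho(\gamma)g)$, a section $\delta$ is the same as a $\rho$-equivariant map $\t\delta\co\t N\to G$, and the flat connection pulls back to $a:=\delta^{\ast}A=\t\delta^{\ast}\theta$, where $\theta$ is the Maurer--Cartan form of $G$; then $a$ is a $\mathfrak g$-valued $1$-form on $N$ obeying the flatness equation $da+\tfrac12[a,a]=0$. Projecting $G\to X$ along the $G$-action gives the developing map $D\co\t N\to X$, so that, by the definition of the volume of a representation via the invariant form $\omega_X$, one has ${\rm vol}_G(N,\rho)=\int_N D^{\ast}\omega_X$.

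Next I would reduce the Chern--Simons integrand. Substituting $dA=-\tfrac12[A,A]$ into ${\bf R}\!\left(dA\wedge A+\tfrac13A\wedge[A,A]\right)$ and using the symmetry of ${\bf R}$ together with ${\bf R}([A,A]\wedge A)={\bf R}(A\wedge[A,A])$, the three-form collapses to $-\tfrac16{\bf R}(A\wedge[A,A])$. Now decompose $a=a_{\s}+a_t$ into its $\mathfrak{sl}_2(\R)$- and $\R$-parts; since the $\R$-summand of $\mathfrak g$ is central, $[a,a]=[a_{\s},a_{\s}]$ carries no $\R$-component, and the definition ${\bf R}(X\otimes X)={\rm Tr}(X^2)+t^2$ gives ${\bf R}(a\wedge[a,a])={\rm Tr}(a_{\s}\wedge[a_{\s},a_{\s}])$. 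Thus $\mathfrak{cs}_N(A,\delta)=-\tfrac16\int_N{\rm Tr}(a_{\s}\wedge[a_{\s},a_{\s}])$, and I record that $a_{\s}$ itself satisfies $da_{\s}=-\tfrac12[a_{\s},a_{\s}]$.

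The heart of the proof is to identify this with the volume. Since $X$ is a $3$-dimensional Lie group, its left-invariant volume form $\omega_X$ agrees, up to a universal constant $\lambda$, with the bi-invariant Cartan $3$-form, so $D^{\ast}\omega_X=\lambda\,{\rm Tr}\!\left(D^{\ast}\theta_X\wedge[D^{\ast}\theta_X,D^{\ast}\theta_X]\right)$, where $\theta_X$ is the Maurer--Cartan form of $X$. Comparing the two projections of $\t\delta$ one finds $D^{\ast}\theta_X=a_{\s}+c$, where the fiber correction $c$ has the form $\zeta\,\beta$ with $\zeta\in\mathfrak{sl}_2(\R)$ fixed and $\beta$ a closed $1$-form on $N$. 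Expanding ${\rm Tr}((a_{\s}+c)\wedge[a_{\s}+c,a_{\s}+c])$ and dropping the terms that vanish for form-degree reasons (those containing $\beta\wedge\beta$), the only surviving correction is $2{\rm Tr}(a_{\s}\wedge[a_{\s},c])+{\rm Tr}(c\wedge[a_{\s},a_{\s}])$; using $da_{\s}=-\tfrac12[a_{\s},a_{\s}]$, $d\beta=0$ and the $\mathrm{Ad}$-invariance of ${\rm Tr}$, this combination is a constant multiple of $d\,{\rm Tr}(a_{\s}\wedge c)$, hence exact and of zero integral over the closed $N$. Therefore $\int_N D^{\ast}\omega_X=\lambda\int_N{\rm Tr}(a_{\s}\wedge[a_{\s},a_{\s}])$, and combining this with the reduction above and pinning down $\lambda$ together with the Cartan-form-to-volume normalization yields $\mathfrak{cs}_N(A,\delta)=\tfrac23{\rm vol}_G(N,\rho)$. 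Gauge invariance is then immediate: the right-hand side depends only on $\rho$ and not on the chosen section, and the usual integral indeterminacy is absent because $X$ is contractible.

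I expect the principal obstacle to be exactly this last identification. One must (i) verify that the fiber-direction correction terms are genuinely exact rather than merely closed --- which is where the flatness of $a_{\s}$ and the centrality of the $\R$-factor are used essentially --- and (ii) fix the two normalizing constants so that the classical ratio between the Cartan $3$-form of $\mathfrak{sl}_2(\R)$, built from the indefinite form ${\rm Tr}(X^2)$, and the Riemannian volume form of $\t{{\rm SL}_2(\R)}$ produces the factor $\tfrac23$ rather than some other rational multiple. A secondary check is that ${\bf R}$, and in particular its $t^2$-summand, is $\mathrm{Ad}$-invariant, which is what legitimizes discarding the $\R$-part in the reduction above.
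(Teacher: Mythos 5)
Your overall strategy --- use flatness to collapse the Chern--Simons integrand to $-\tfrac16{\bf R}(A\wedge[A,A])$, discard the central $\R$-part of the Lie algebra, and compare the Cartan $3$-form of the $\mathfrak{sl}_2(\R)$-part with the pullback of $\omega_X$ up to an exact term and Stokes --- is the same skeleton as the paper's proof, and your steps up through the identity $\mathfrak{cs}_N(A,\delta)=-\tfrac16\int_N{\rm Tr}\left(a_{\mathfrak{s}}\wedge[a_{\mathfrak{s}},a_{\mathfrak{s}}]\right)$ are correct. But the proof is not complete, and the decisive gap is the one you yourself flag: you never compute the proportionality constant $\lambda$ between $\omega_X$ and the Cartan $3$-form, nor the resulting overall constant. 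What your argument establishes is only $\mathfrak{cs}_N(A,\delta)=\kappa\,{\rm vol}_G(N,\rho)$ for an unspecified universal $\kappa$; yet the factor $\tfrac23$ \emph{is} the content of formula (1.2), and it depends on the concrete normalizations in play (the polynomial ${\bf R}$, the metric on $\t{{\rm SL}_2(\R)}$ induced from the circle-bundle description over ${\Hi}^2$, and the parametrization ${\rm sh}$ of the fiber translations); it also feeds directly into the $4\pi^2$ constants used later in the paper. The paper's proof consists precisely of the computation you defer: it takes the basis $X,Y,Z$ of $\mathfrak{sl}_2(\R)$ and the generator $T$ of $\R$, replaces $T$ by $W=Z-Y-T$ (chosen so that $\varphi_X\wedge\varphi_Y\wedge\varphi_Z$ is the pullback of $\omega_X$ under $G\to X$), writes out the structural equations (5.11)--(5.14), and reads off the explicit primitive
\[
T{\bf R}(\omega_{\rm M.C.})=\tfrac{2}{3}\,\varphi_X\wedge\varphi_Y\wedge\varphi_Z+\tfrac{1}{3}\,d\left(\varphi_Y\wedge\varphi_W-\varphi_Z\wedge\varphi_W\right).
\]
Until you carry out this (or an equivalent) evaluation on an explicit frame, the stated equality is not proved.

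There is also an error in your key identification $D^{\ast}\theta_X=a_{\mathfrak{s}}+c$. Writing $\t\delta=(x,h)$ locally in $G={\R}\times_{\Z}\t{{\rm SL}_2(\R)}$, the projection $G\to X$ is $(x,h)\mapsto h\,{\rm sh}(x)$, hence
$D^{\ast}\theta_X={\rm Ad}_{{\rm sh}(x)^{-1}}\left(a_{\mathfrak{s}}\right)+\sigma\,dx$ with $\sigma=\frac{d}{dt}{\rm sh}(t)|_{t=0}$: besides the additive fiber term there is a pointwise conjugation of $a_{\mathfrak{s}}$, which your formula omits. The error is repairable, because the Cartan $3$-form is ${\rm Ad}$-invariant, so for $b={\rm Ad}_{{\rm sh}(x)^{-1}}(a_{\mathfrak{s}})$ one still has ${\rm Tr}(b\wedge[b,b])={\rm Tr}(a_{\mathfrak{s}}\wedge[a_{\mathfrak{s}},a_{\mathfrak{s}}])$ pointwise, and the cross terms remain exact; but the cross-term computation must then use the structure equation $db=-\tfrac12[b,b]-[b,\sigma\,dx]$ (coming from the Maurer--Cartan equation satisfied by $D^{\ast}\theta_X$), not $da_{\mathfrak{s}}=-\tfrac12[a_{\mathfrak{s}},a_{\mathfrak{s}}]$, which holds for $a_{\mathfrak{s}}$ but fails for $b$. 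As written, your argument conflates these two $1$-forms, so the exactness step should be redone with the corrected decomposition.
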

\begin{remark}
Assuming that $P=M\times_\rho G$ admits a section means equivalently
that $\rho$ admits a lift into $\t{{\rm SL}_2(\R)}$ so that the
bundle admits a reduction to an $\t{{\rm SL}_2(\R)}$-bundle and we
reckon that the correspondence in Proposition \ref{vol} for
$G=\t{{\rm SL}_2(\R)}$  is pointed in \cite{Re1}, and verified in
\cite{Kh} by  a long and subtle computation. However for our own
understanding we reprove it in a very simple way  underscoring that
the correspondence is quite natural and comes directly from the
structural equations of the Lie group involved (see Section 5.4).
\end{remark}
The following correspondence is derived from \cite{KK}. Let's denote
the imaginary part of the complex number $z$ by $\Im(z)$.
\begin{proposition}\label{volh} Let $\rho$ be a  representation of $\pi_1N$ into $G={\rm PSL}(2;{\C})$ and let   $A$ be a  corresponding flat $G$-connection over $N$. If
$N\times_\rho G$ admits a section $\delta$ over $N$ then
\begin{eqnarray}
\Im(\left(\mathfrak{cs}_N(A,\delta)\right)=-\frac{1}{\pi^2}{\rm
vol}_{G}(N,\rho)
\end{eqnarray}
\end{proposition}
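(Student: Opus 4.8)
The plan is to prove Proposition \ref{volh}, which asserts that for a representation $\rho\co\pi_1N\to G={\rm PSL}(2;{\C})$ with corresponding flat connection $A$ and a section $\delta$ of the associated bundle $N\times_\rho G$, the imaginary part of the Chern--Simons invariant $\mathfrak{cs}_N(A,\delta)$ equals $-\frac{1}{\pi^2}{\rm vol}_G(N,\rho)$. Let me think carefully about what tools are available and how the pieces fit together.

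The proposition is stated as "derived from \cite{KK}," so the strategy should be to extract the closed-manifold statement from the Kirk--Klassen framework rather than to build everything from scratch. The key conceptual point is that the Chern--Simons class for ${\rm PSL}(2;{\C})$ is built from the first Pontrjagin class, as the excerpt emphasizes. For a flat connection the relevant secondary characteristic class is the Chern--Simons form $\mathfrak{cs}_N(A,\delta)=\int_N\delta^{\ast}(\cdots)$, and because ${\rm PSL}(2;{\C})$ is a complex group this secondary class is naturally complex-valued. Its real part recovers the (real) Chern--Simons invariant, while its imaginary part is governed by the other invariant polynomial on $\mathfrak{sl}_2({\C})$, the one that when evaluated on the flat connection produces the volume form pulled back from ${\Hi}^3$.

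Here is the approach I would take. First, I would recall the decomposition of the relevant $G$-invariant polynomial on $\mathfrak{g}=\mathfrak{sl}_2({\C})$ into its real and imaginary parts, paralleling the discussion preceding Proposition \ref{vol} for the ${\rm Iso}_e\t{{\rm SL}_2(\R)}$ case; the imaginary component is precisely the one that integrates to give hyperbolic volume. Second, I would use the fact that $A$ is flat, so that the curvature $F_A=dA+\frac{1}{2}A\wedge[A,A]$ vanishes, which lets me simplify the Chern--Simons transgression form and identify the integrand $\delta^{\ast}{\bf R}(\cdots)$ with the pullback of the appropriate multiple of the volume form $\omega_X$ on ${\Hi}^3$ under the developing-type map determined by $\rho$ and $\delta$. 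Third, I would invoke the definition of ${\rm vol}_G(N,\rho)$ as the integral over $N$ of this pulled-back volume form (this is the definition of the volume of a representation used throughout), and match the normalizing constant. The factor $-\frac{1}{\pi^2}$ is exactly the constant that appears in \cite{KK} relating the imaginary part of the ${\rm PSL}(2;{\C})$ Chern--Simons invariant to hyperbolic volume, arising from the normalization of the Killing form on $\mathfrak{sl}_2({\C})$ against the chosen volume form $\omega_X$ on ${\Hi}^3$.

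The main obstacle I expect is bookkeeping the normalization constant $-\frac{1}{\pi^2}$ correctly and ensuring the sign and factor are consistent with the conventions for $\omega_X$, for the invariant polynomial ${\bf R}$, and for the Chern--Simons transgression. This is exactly the kind of delicate computation that the authors did carefully for the $\t{{\rm SL}_2(\R)}$ case in Proposition \ref{vol} (where the constant is $\frac{2}{3}$), and the complex case requires tracking how the first Pontrjagin class splits into real and imaginary transgressions over ${\rm PSL}(2;{\C})$. Since the existence of the section $\delta$ guarantees a global reduction that makes the transgression well-defined as an honest integral (rather than a class modulo periods), the analytic content is straightforward once the normalization is fixed; the real work is verifying that the imaginary part of the Kirk--Klassen pairing reduces, for a \emph{closed} manifold $N$ with a flat connection admitting a section, to the stated clean formula with the correct constant. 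I would therefore devote most of the proof to carefully citing the relevant formula in \cite{KK}, specializing it to the closed flat case, and confirming the constant by comparing against the discrete faithful representation where $\Im(\mathfrak{cs}_N)$ is known to reproduce $-\frac{1}{\pi^2}{\rm vol}_{{\Hi}^3}(N)$.
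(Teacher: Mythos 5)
Your high-level architecture --- a flat connection is the push-forward of the Maurer--Cartan form, evaluate the transgression along the section $\delta$, and match normalizations against \cite{KK} --- is the right one, but the central step of your plan conceals a genuine gap. You assert that flatness of $A$ lets you ``identify the integrand with the pullback of the appropriate multiple of the volume form $\omega_X$'' under a developing-type map. That identification is false as stated, and proving the correct version of it is the actual content of the paper's proof. Flatness only tells you that the Chern--Simons integrand is the push-forward of $q^{\ast}TP_1(\omega_{\rm M.C.})$, where $TP_1(\omega_{\rm M.C.})$ is a left-invariant $3$-form on the \emph{six-dimensional} group ${\rm PSL}(2;{\C})$; this form is not proportional to the pullback $p^{\ast}\omega_{{\Hi}^3}$ under $p\co{\rm PSL}(2;{\C})\to{\Hi}^3$. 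The paper's proof rests on Yoshida's formula \cite{Yo},
$$iTP_1(\omega_{\rm M.C.})=\frac{1}{\pi^2}\,p^{\ast}\omega_{{\Hi}^3}+i\,\mathfrak{cs}_{\rm L.C.}({\Hi}^3)+d\gamma,$$
so the discrepancy consists of the Chern--Simons $3$-form of the Levi--Civita connection of ${\Hi}^3$ (a \emph{real} form, which is why it drops out of the imaginary part but survives in the real part as the uninterpreted term $\mathfrak{cs}(N_{\rho};\delta)$ of formula (5.15)) plus an exact real form. Only after quoting this decomposition can one push everything down through the diagram $\t{N}\times G\to N\times_\rho G$ (using left-invariance of the forms), identify $\delta^{\ast}p^{\ast}$ with $s^{\ast}$ for the section $s=p\circ\delta$ of $N\times_\rho{\Hi}^3$, kill $d\gamma$ by Stokes, and read off $\Im(\mathfrak{cs}_N(A,\delta))=-\frac{1}{\pi^2}{\rm vol}_G(N,\rho)$. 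This is exactly where the hyperbolic case differs from Proposition \ref{vol}: for ${\rm Iso}_e\t{{\rm SL}_2(\R)}$ the term beyond the volume form happens to be exact, so the structure equations alone suffice, whereas here it is not, and no amount of ``simplification using flatness'' will make it so.

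Your fallback for the constant --- comparison with a discrete faithful representation --- cannot close this gap either: such a comparison pins down the constant only once you already know that $\Im(\mathfrak{cs}_N(A,\delta))$ is proportional to ${\rm vol}_G(N,\rho)$ for \emph{all} representations of \emph{all} closed $N$, and that proportionality is precisely what the missing formula supplies. To complete the proof you must either quote Yoshida's decomposition (or its restatement in \cite{KK}) explicitly, or reproduce its derivation, before carrying out the pullback and Stokes bookkeeping you describe.
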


\begin{remark}\label{volhh}
The imaginary part of the Chern Simons invariants of flat ${\rm
PSL}(2;{\C})$-connections is gauge invariant from the formula for it
does not depend on the chosen section. We don't give any geometric
interpretations for the real part of  $\mathfrak{cs}_M(A,\delta)$. It
is not gauge invariant and it won't be used throughout our proofs.
If the developing map $D_\rho \co\t{M}\to{\Hi}^3$, corresponding to
$\rho\co\pi_1M\to G $, were an isometry with respect to the
pull-backed metric of ${\Hi}^3$, then certainly 
$\Re(\mathfrak{cs}_M(A,\delta))$ would be  the ${\R}/{\Z}$-valued
Chern-Simons invariant of the Levi Civita connection corresponding
to the  Riemannian metric in $\t{M}$ pull-backed from the hyperbolic
metric by $D^{\ast}$. This is  correct  when $M$ is itself a
complete hyperbolic manifold and when $\rho$ is a faithful discrete
representation, as it is stated in \cite{KK}.
\end{remark}


\subsection{Complexity of the sewing involution}
Recall that each 3-manifold with non-trivial geometric decomposition
is determined by the topology of its geometric pieces and the
isotopy class of the gluing maps among them. At this point it is
worth recalling that the simplicial volume $||*||$ tells nothing
about the gluing. However both ${\rm vol}\left(N, {\rm
PSL}(2;{\C})\right)$ and ${\rm vol}(N, \rm {Iso}\t{\rm SL_2(\R)})$
somehow do contain the gluing information.

To illustrate this fact and to avoid complicated computations,
below, we often focus on the simplest 3-manifolds with non-trivial
geometric decomposition, namely the $1$-edged manifolds.
We present them as
$N=Q_-\cup_\tau Q_+$, where $\tau\co\b Q_-=T_-\to \b Q_+=T_+$ is the
gluing map of their two geometric pieces $Q_-$ and $Q_+$.

Recall that $N$ is termed a \emph{graph manifold} if both $Q_-$ and $Q_+$ are Seifert. On the other hand a finite covering $p\co\t{N}\to N$ is termed \emph{$q\times q$-characteristic}, for some integer $q$, if for any component $\t{T}$ over $T$ the map $p$ induces the covering $p|\co\t{T}\to T$ corresponding to the subgroup $q{\Z}\oplus q{\Z}$ of ${\Z}\oplus{\Z}=\pi_1T$.

We fix a basis $s_\e,h_\e$ for $H_1(\b Q_\e;{\Z})$, $\e=\pm$, so that
the isotopy class of $\tau$ is given by an integral matrix
$\tau_*=\begin{pmatrix}
a&b \\
c&d
\end{pmatrix}$ with  determinant $-1$ and $$\tau(s_-)=as_++ch_+\  {\rm and} \  \tau(h_-)=bs_++dh_+.$$
Moreover, we choose the prefered basis:

(A) when $Q_\e$ is a Seifert manifold,  we require that
$T_{\e}(s_\e,h_\e)$ is a section-fiber basis (see section 2.3 for the definition). Notice that when both
$Q_-$ and $Q_+$ are Seifert manifolds, then $b\not=0$, otherwise $N$
would be a Seifert manifold.

(B) when $Q_\e$ is a hyperbolic piece, we require that
$T_{\e}(s_\e,h_\e)$ is a basis that consists of the first and second
shortest simple closed geodesic on its Euclidean boundary on the maximal
cusp (see section 2.2). Under this basis, the norm of a curve $as_\e+bh_\e$ defined by $\sqrt{a^2+b^2}$ is
equivalent to the Euclidean norm in the cusp. Let us denote by $0<k\leq K$ the real constants such that for any $a,b\in{\Z}$ then
\begin{eqnarray}
k\sqrt{a^2+b^2}\leq  {\rm length}(as_{\e}+bh_{\e})\leq
K\sqrt{a^2+b^2}
\end{eqnarray}
See Remark \ref{universal constant} for more details about this construction.

\begin{proposition}\label{Seifert-Seifert} Let $N=Q_-\cup_\tau Q_+$ be an  one-edged  graph manifold and denote by $G$  the group ${\rm Iso}_e\t{{\rm
SL}_2(\R)}$. There exists a $n$-fold $q\times q$-characteristic
covering $\t{N}\to N$, where $n,q$ depend only on $Q_-$ and  $Q_+$,
and a representation $\varphi\co\pi_1\t{N}\to G$  such that

 $${\rm vol}_G(\t{N},\varphi)=8\pi^2\frac{n}{q^2}\   {\rm if}\  a=d=0,\ \    {\rm vol}_G(\t{N},\varphi)=4\pi^2\frac{n}{q^2|b|}\   {\rm if}\   c=0,\ ,$$

 $${\rm vol}_G(\t{N},\varphi)=4\pi^2\frac{n}{q^2|ac|}\   {\rm if}\     ac\not=0,\ \    {\rm vol}_G(\t{N},\varphi)=4\pi^2\frac{n}{q^2|cd|}\   {\rm if}\      cd\not=0.$$
\end{proposition}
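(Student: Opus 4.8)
The plan is to construct an explicit representation $\varphi\colon\pi_1\widetilde N\to G$ on a suitable characteristic cover and then compute its volume using the Chern–Simons formula of Proposition \ref{vol}. Since $\widetilde N$ is again a one-edged graph manifold $\widetilde Q_-\cup_{\widetilde\tau}\widetilde Q_+$ whose two pieces are Seifert, the strategy is the "components after components" approach described in the introduction: first produce on each Seifert piece $\widetilde Q_\varepsilon$ a representation of non-zero volume, chosen among the representations exhibited in Proposition \ref{SvSm}, then arrange that the two local representations agree on the torus $\widetilde T$ so that they glue to a global $\varphi$.

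**First** I would pass to the $q\times q$-characteristic cover in order to gain enough room on the boundary torus. The point of the $q\times q$-condition is that every slope is divisible by $q$ on each $\widetilde T$, so the restrictions $\varphi|_{\pi_1\widetilde T}$ can be taken to factor through abelian representations into a maximal torus (or into the center-related directions) of $G$, which is exactly what is needed for the section-fiber bases on the two sides to be matched under $\widetilde\tau$. Concretely, on a Seifert piece with section-fiber basis $(s_\varepsilon,h_\varepsilon)$ the representations from Proposition \ref{SvSm} send the fiber $h_\varepsilon$ into the $\R$-direction (the $t$-factor of ${\rm Iso}_e\widetilde{{\rm SL}_2(\R)}$) and the section $s_\varepsilon$ into an elliptic/parabolic element; the gluing compatibility on $\widetilde T$ then becomes a finite set of linear congruences in the entries $a,b,c,d$ of $\tau_*$, which the factor $q$ is chosen to clear.

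**Next** I would compute the volume by Proposition \ref{vol}, namely ${\rm vol}_G(\widetilde N,\varphi)=\tfrac32\,\mathfrak{cs}_{\widetilde N}(A,\delta)$, where $A$ is the flat connection associated to $\varphi$. Because the volume form is supported Poincaré-dually by the Seifert pieces, the Chern–Simons integral localizes, and its value is governed by the boundary data on $\widetilde T$; this is where the four cases arise. The entries of $\tau_*$ determine how the section and fiber of $\widetilde Q_-$ are expressed in those of $\widetilde Q_+$ via $\widetilde\tau(s_-)=a s_+ + c h_+$ and $\widetilde\tau(h_-)=b s_+ + d h_+$, and the Seifert-volume formula of Proposition \ref{Seifert volumes of Seifert manifolds} contributes a factor of the shape $4\pi^2(\text{rational})^2/|e|$ on each side. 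Tracking which of $a,b,c,d$ vanish or are nonzero dictates how the two local Euler-number contributions combine: $a=d=0$ forces the two fibers to be identified (giving $8\pi^2$), $c=0$ ties the fiber of one side to the section of the other (giving the $|b|^{-1}$ factor), and the generic subcases $ac\neq0$, $cd\neq0$ produce the $|ac|^{-1}$, $|cd|^{-1}$ factors; the overall $n/q^2$ is the degree-$n$ multiplicativity of Theorem \ref{basic property of volume of presentation}(4) combined with the $q^2$ rescaling of slopes on the $q\times q$-cover.

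**The hard part will be** the gluing compatibility: showing that, after passing to the $q\times q$-characteristic cover, the two local representations furnished by Proposition \ref{SvSm} can be simultaneously adjusted so that their restrictions to $\pi_1\widetilde T$ coincide as representations into $G$ (not merely up to conjugacy), while keeping the chosen non-zero boundary values that drive the volume computation. This requires a careful bookkeeping of the section-fiber bases on both sides under the matrix $\tau_*$, and it is precisely the non-additivity of the volume across the torus — the fact that the glued volume depends on $\tau_*$ — that makes the four-case answer genuinely sensitive to the gluing. Once the matching is established, the volume evaluation itself is the routine Chern–Simons localization outlined above, so the essential content of the proof lies in the construction and compatibility of $\varphi$.
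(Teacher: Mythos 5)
Your proposal has the right skeleton (pass to a characteristic cover with product Seifert pieces, use the representations of Proposition \ref{SvSm}, compute via Chern--Simons), but it is missing the actual mechanism by which the volume of the glued representation is computed, and that gap is fatal. You replace the paper's central tool --- the additivity principle of Section 5.7, equation (5.17) --- with an appeal to ``Chern--Simons localization'' and the claim that ``the volume form is supported Poincar\'e-dually by the Seifert pieces.'' No such localization exists: the volume of a representation of a graph manifold is \emph{not} a sum of contributions of the geometric pieces in any naive sense (the paper stresses this non-additivity as the very reason the invariant detects the gluing). What the paper actually does is: put the flat connection in normal form near the splitting tori, design the representation so that a specific slope on each side (for instance $s_-$ on the $Q_-$ side and $as_++ch_+$ on the $Q_+$ side) lies in its kernel, Dehn-fill both sides along those slopes so the flat connections extend over the resulting \emph{closed} manifolds, and only then apply (5.17) to get ${\rm vol}(\t N,\varphi)={\rm vol}(\hat Q_-,\hat\varphi_-)+{\rm vol}(\hat Q_+,\hat\varphi_+)$, each term being evaluated by Proposition \ref{SvSm}. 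Arranging which slope can be killed is exactly what produces the four cases in terms of $a,b,c,d$; without this step your ``routine localization'' has no content.

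Two further concrete problems. First, your gluing strategy --- take non-zero-volume representations from Proposition \ref{SvSm} on \emph{both} pieces and match them exactly on the torus --- is not what is needed and is generally not achievable; the paper instead takes a nontrivial representation on one piece, computes its boundary values in the other piece's basis via $\tau_*$, and extends them over the other piece by an essentially trivial representation (e.g.\ killing all of $\pi_1F_-$). This extension is possible precisely because the covering of Lemma \ref{cover} and \cite{DW1} makes each Seifert piece a product of a high-genus surface with ${\S}^1$ (so the EHN inequalities hold and arbitrary boundary values extend); that, and not the ``clearing of congruences by $q$'' you describe, is the role of the cover. Second, even after the representation is built, the additivity principle requires the flat bundles to admit sections (Proposition \ref{vol}); in the case $ac\neq 0$ the representation takes values in ${\R}\times_{\Z}\t{{\rm SL}_2(\R)}$ and the paper must pass to an auxiliary $c$-fold cyclic cover $\t N_c$ on which the induced representation lands in the contractible group $\t{{\rm SL}_2(\R)}$, apply additivity there, and divide the result by $|c|$. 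This step is entirely absent from your proposal, so the computation in the generic case $ac\neq 0$ (and $cd\neq 0$) would not go through as written.
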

There exists a \emph{uniform constant} $C=2\pi/k$ such that for any one-cusped hyperbolic $3$-manifold $Q$ then a  deformation of the
hyperbolic structure on $Q$ can be extended to a complete hyperbolic
one in the surgered manifold $Q(a,b)$ provided $\|(a,b)\|_2>C$ where $a,b$ are the
co-prime coefficients of the curve of $\b Q$ (in the chosen basis)
identified with the meridian of the solid torus (see paragraph 2.2).
\begin{proposition}\label{Seifert-hyperbolic} Let $N=Q_-\cup_\tau Q_+$ be an   one-edged   $3$-manifold, where $Q_-$ is Seifert and $Q_+$ is hyperbolic.
Then  there exists  a $n$-fold $q\times q$-characteristic covering
$\t{N}\to N$, where $n,q$ depend only on $Q_-$ and  $Q_+$,  and a
representation $\varphi\co\pi_1\t{N}\to{\rm PSL}(2;{\C})$ such that  for any
$\|(a,c)\|_2>C$ then
\begin{eqnarray}
{\rm vol}_{{\rm PSL}(2;{\C})}(\t{N},\varphi)=n{\rm vol}Q_+(a,c)+\frac{\pi n(q-1)}{2q}{\rm length}(\gamma)
\end{eqnarray}
where $\gamma$ is the geodesic added to $Q_+$ to complete the cusp
with respect to the $(a,c)$-Dehn filling and ${\rm length}(\gamma)$ denotes its length in the complete hyperbolic structure of $Q_+(a,c)$. The same statement is true
for $(b,d)$.
\end{proposition}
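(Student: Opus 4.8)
The plan is to realize $\t N$ as the union of a piece carrying a deformed hyperbolic (cone) structure and a Seifert piece carrying an abelian, hence reducible, representation, and then to read off the two terms of the formula as the bulk hyperbolic volume of the filling and a boundary correction measured by the core geodesic. First I would fix the meridian--longitude picture on the gluing torus: since $\tau(s_-)=as_++ch_+$ and $\tau(h_-)=bs_++dh_+$ with $\det\tau_*=-1$, the section $s_-$ of $Q_-$ is identified with the $(a,c)$-filling slope $\mu$ on $T_+$, while the fiber $h_-$ is identified with a complementary slope $\lambda$ running along the core. Using the hyperbolic Dehn surgery statement recalled just before the Proposition (applicable precisely because $\|(a,c)\|_2>C$), I would deform the complete structure on $Q_+$ to a hyperbolic cone structure on $Q_+(a,c)$ with cone angle $2\pi/q$ along the core geodesic $\gamma$. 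Its holonomy $\varphi_+\co\pi_1Q_+\to{\rm PSL}(2;{\C})$ sends $\mu$ to the elliptic rotation of angle $2\pi/q$ about the axis of $\gamma$ and $\lambda$ to the loxodromic element whose translation length is ${\rm length}(\gamma)$.

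On the Seifert side I would observe that $\varphi_+(\mu)^q$ is trivial, so after passing to the $q\times q$-characteristic cover the lifted slope $\mu$ is killed and the boundary holonomy lands in the abelian subgroup $\mathbb{C}^{\ast}$ stabilizing the axis of $\gamma$. Here the fiber $h_-$ must map to the loxodromic $\varphi_+(\lambda)$, and being central it forces the entire Seifert representation into this $\mathbb{C}^{\ast}$. I would build $\varphi_-\co\pi_1\t Q_-\to\mathbb{C}^{\ast}$ with these prescribed boundary values using the explicit Seifert representations of Proposition \ref{SvSm}; the $n$-fold $q\times q$-characteristic cover $\t N$, with $n,q$ depending only on $Q_\pm$, is chosen so that the Seifert fibration unwraps enough for such a homomorphism on $H_1(\t Q_-)$ to exist and to agree with $\varphi_+\circ\tau_*$ on every lift of $T$. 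Gluing $\varphi_+$ and $\varphi_-$ along the compatible boundary data yields $\varphi\co\pi_1\t N\to{\rm PSL}(2;{\C})$.

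For the volume I would apply Proposition \ref{volh}, writing ${\rm vol}_{{\rm PSL}(2;{\C})}(\t N,\varphi)=-\pi^2\Im(\mathfrak{cs}_{\t N}(A,\delta))$ and splitting the Chern--Simons integral over $\t Q_+$ and $\t Q_-$. The Seifert contribution is reducible and carries no bulk volume, so the leading term is $n$ times the volume of the hyperbolic piece, namely $n\,{\rm vol}\,Q_+(a,c)$. The excess over the complete filling is governed by the Schl\"afli formula for the cone family, $dV=-\tfrac12{\rm length}_\alpha(\gamma)\,d\alpha$; integrating from $\alpha=2\pi/q$ to $\alpha=2\pi$ and using $\|(a,c)\|_2>C$ to control ${\rm length}_\alpha(\gamma)$ by ${\rm length}(\gamma)$ produces a term proportional to $\frac{q-1}{q}{\rm length}(\gamma)$, which after accounting for the boundary term of $\mathfrak{cs}$ yields the stated $\frac{\pi n(q-1)}{2q}{\rm length}(\gamma)$. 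The statement for $(b,d)$ then follows by interchanging the roles of the two slopes.

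The main obstacle lies entirely in the transition region and is twofold. First, arranging $\varphi_+$ and $\varphi_-$ to agree on $T$ forces exact control of the elliptic angle $2\pi/q$ and of the loxodromic image of the fiber; this is what pins down the cover, and one must verify that the required homomorphism on $H_1(\t Q_-)$ genuinely exists, which is where Proposition \ref{SvSm} and the choice of $n,q$ enter. Second, and more delicate, is extracting the exact coefficient $\frac{\pi(q-1)}{2q}$: the naive Schl\"afli excess is $\pi\frac{q-1}{q}{\rm length}(\gamma)$, and the factor $\tfrac12$ reflects how the boundary term of the Chern--Simons integral distributes the cone excess between the hyperbolic and the Seifert pieces. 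Making this precise---equivalently, computing $\Im(\mathfrak{cs})$ of the reducible boundary holonomy exactly rather than only to leading order---is the crux of the argument, and is where the explicit structural form of the integrand from Propositions \ref{vol} and \ref{volh} is indispensable.
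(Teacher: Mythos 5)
Your construction is not the paper's, and the difference is fatal for the exact identity you are asked to prove. The paper never uses the cone structure of angle $2\pi/q$: it takes the deformation $d\in D^{\ast}$ with $a\alpha+c\beta=1/2$, i.e. the holonomy $\rho^+_d$ of the \emph{complete smooth} structure on $Q_+(a,c)$, so that the filling slope is already killed on $\pi_1Q_+$ downstairs. The covering $\t{N}\to N$ is needed only to build the complementary representation on the Seifert side, and the term $\frac{\pi n(q-1)}{2q}{\rm length}(\gamma)$ is produced by exact Chern--Simons bookkeeping: Corollary \ref{adding} applied to each filling solid torus upstairs, together with Remark \ref{fondamental}, which records that these solid tori cover the one downstairs by a $q\times q$ covering \emph{branched along the core}, so that the relevant lifting is $(1/2,q(b\alpha+d\beta))$ rather than the naive $q$-fold one. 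No Schl\"afli argument occurs in the paper, and this is not an accident: Schl\"afli with approximate control of the core length can only give asymptotics, whereas the statement is an exact identity.

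Your choice of deformation makes the volume of your representation computable in closed form, and it is the wrong number. Since the meridian is sent to an order-$q$ elliptic, after passing to the cover your $\hat\varphi_+$ is the holonomy of the \emph{smooth} hyperbolic structure on $\t{Q}_+(a,c)$ obtained by pulling back the hyperbolic orbifold $Q_+(qa,qc)$ of Theorem \ref{surgered}. By the additivity principle of Section 5.7 the Seifert side contributes exactly $0$ (your representation there is abelian and factors through the fiber), so ${\rm vol}(\t{N},\varphi)={\rm vol}(\t{Q}_+(a,c),\hat\varphi_+)$; and since $\hat\varphi_+$ is discrete and faithful, Theorem \ref{basic property of volume of presentation}(2) and multiplicativity of Riemannian volume under the degree-$n$ orbifold covering give
\begin{equation*}
{\rm vol}(\t{N},\varphi)={\rm vol}_{{\Hi}^3}\bigl(\t{Q}_+(a,c)\bigr)=n\,{\rm vol}\bigl(Q_+(qa,qc)\bigr)=n\,{\rm vol}\,Q_+(a,c)+\frac{n}{2}\int_{2\pi/q}^{2\pi}\ell_\theta\,d\theta,
\end{equation*}
the last equality by Schl\"afli, where $\ell_\theta$ is the core length at cone angle $\theta$. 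Because $\ell_\theta$ is not constant (to leading order $\ell_\theta=\frac{\theta}{2\pi}{\rm length}(\gamma)$ by Neumann--Zagier, cf. Remark \ref{zag}), this equals $n\,{\rm vol}\,Q_+(a,c)+\frac{\pi n(q^2-1)}{2q^2}{\rm length}(\gamma)+\cdots$, and $\frac{q^2-1}{2q^2}\neq\frac{q-1}{2q}$. The mismatch is at leading order in ${\rm length}(\gamma)$, so no sharpening of your estimates can close it: the representation you build simply has a different volume from the one claimed. Two further symptoms of the same problem: in your cone structure the holonomy of $\lambda$ has translation length approximately ${\rm length}(\gamma)/q$, not ${\rm length}(\gamma)$ (the statement's length is taken in the complete structure of $Q_+(a,c)$); and your explanation of the factor $\frac12$ as the Chern--Simons boundary term ``distributing the cone excess between the two pieces'' cannot be correct, because the volume of a representation of a closed manifold is an invariant of the representation and the Seifert piece contributes exactly zero, so there is nothing to distribute.

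If you want to reach the stated formula you must therefore change the representation, not the estimates: start from $\rho^+_d$ with $a\alpha+c\beta=1/2$, pull it back, and compare $\mathfrak{cs}^{\ast}_{\t{Q}_+(a,c)}(\hat{A}_+)$ with $\bigl(\mathfrak{cs}^{\ast}_{Q_+(a,c)}(\hat{A}^+_d)\bigr)^{n}$ via the change-of-lifting formula of Section 5.6 on each filling torus; taking moduli and using $\Im(b\alpha+d\beta)=\pm{\rm length}(\gamma)/4\pi$ is what yields the coefficient $\frac{\pi n(q-1)}{2q}$. Note that the single lifting claim of Remark \ref{fondamental} is the entire source of that exact coefficient, and it is by far the most delicate point of the paper's argument, so it deserves your closest scrutiny if you rewrite the proof along these lines.
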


\begin{remark}\label{zag}
  By the computations
made in \cite{NZ}  we get
\begin{eqnarray}
{\rm vol}Q_+(a,c)={\rm vol}Q_+-\frac{\pi}{2}{\rm length}(\gamma)+O\left(\frac{1}{a^4+c^4}\right)
\end{eqnarray}
where
\begin{eqnarray}
 {\rm length}(\gamma)=2\pi\frac{\Im{(z_0)} }{|a+z_0.c|^2}+O\left(\frac{1}{a^4+c^4}\right)
 \end{eqnarray}
where $z_0$ is a complex number with $\Im{(z_0)}>0$ giving the
modulus of the Euclidean structure on the torus $T$ corresponding to
the cusp of $Q_+$. Substituting (1.6) and (1.7) into Proposition
\ref{Seifert-hyperbolic} we have by \cite[Theorem
1A]{NZ}
\begin{eqnarray}
\frac{{\rm vol}_{{\rm PSL}(2;{\C})}(\t{N},\varphi)}{n}={\rm vol}Q_+-\pi^2\frac{\Im{z_0}}{q|a+z_0c|^2}+O\left(\frac{1}{a^4+c^4}\right)
\end{eqnarray}
\end{remark}

\subsection{Organization of the paper } The remaining of the paper is reflected from table of  contents.
Sections 2, 3, and 5 present necessary background and results from
3-manifold theory, volume of representations, and Chern-Simons
theory respectively. The efforts are made in organizing  those
materials  so that our results can be verified smoothly, and readers
can access the topic easily. We will verify Propositions 1.4  in
section 4,
 Proposition 1.10, 1.12  in sub-sections 5.4 and 5.5, Proposition 1.6 in Section 6,  Proposition  1.8 in Sections 7, and   Propositions   1.14, 1.15 in section 8 respectively.

\bigskip\noindent\textbf{Acknowledgement}. The first author would like to thank the  Department  of Mathematics of the Peking University for its support and  its hospitality during the elaboration of this paper. The second author is partially supported by grant
No.10631060 of the National Natural Science Foundation of China. We
thank  Professor Michel Boileau and Professor Daniel Matignon for
helpful communications.

 \section{Topology and geometry of 3-manifolds}

\subsection{Thurston's picture of 3-manifolds}
Each closed orientable 3-manifold $N$ admits a unique prime decomposition
$N_1\#.....\#N_k$, the prime factors being unique up to the order and
up to homeomorphisms.

Let $N$ be a compact orientable 3-manifold. Call an embedded surface
$T$ in $N$ \emph{essential} if $T$ is incompressible (see \cite[page
23]{Ja}) and is not parallel to any component of $\partial N$.

Let $N$ be a closed orientable prime 3-manifold. According to the
theory of Thurston, Johannson and  Jaco-Shalen (\cite{Th1},
\cite{Th2}, \cite{JS},  \cite{Joh}) combined with the geometrization
of $3$-manifolds achieved by  G. Perelman and W.
Thurston,  then either

(i) $N$ supports one of the following eight geometries: ${\Hi}^3$,
$\widetilde{{\rm SL}_2({\R})}$, ${\Hi}^2\times{\R}$, ${\rm Sol}$, ${\rm Nil}$, ${\R}^3$, ${\S}^3$ and
${\S}^2\times {\R}$ (where ${\Hi}^n$, ${\R}^n$ and ${\S}^n$ are the $n$-dimensional
hyperbolic space, Euclidean space and the sphere respectively); in these
cases  $N$ is called geometric; or

(ii) there is a minimal union $\c{T}_N\subset N$ of disjoint
essential tori and Klein bottles of $N$, unique up to isotopy, such
that each piece of $N\setminus\c{T}_N$ is either a Seifert fibered manifold,
supporting the ${\Hi}^2\times{\R}$-geometry,  or the ${\Hi}^3$-geometry. We call  the components of $N\setminus\c{T}_N$ the geometric pieces (Seifert pieces and hyperbolic pieces respectively).

Call a prime closed orientable 3-manifold $N$ a {\it (non-trivial) graph
manifold} if $N$ has a (non-trivial) geometric decomposition but
contains no hyperbolic pieces.

\subsection{Thurston Hyperbolic Dehn filling Theorem }\label{bh}

We denote by ${\Hi}^n$ the $n$-dimensional hyperbolic space that can
be seen as the half-space
${\R}^{n-1}\times(0,\infty)\subset{\R}^{n-1}\times{\R}$ endowed with
the metric $(dx^2_1+...+dx^2_{n})/x_n^2$. The orientations
preserving isometry group ${\rm Iso}_+({\Hi}^n)$ of ${\Hi}^n$ can be
metrically thought of as the oriented unit frame bundle over  ${\Hi}^n$ and
algebraically it is identified with ${\rm PSL}(2;{\R})$ when $n=2$
and ${\rm PSL}(2;{\C})$ when $n=3$.

Let $M$ denote a compact, orientable $3$-manifold whose boundary
consists of tori $T_1,...,T_p$ and whose interior admits a complete
(finite volume) hyperbolic structure. In other words, ${\rm int}M$
is isometric to ${\Hi}^3/\Gamma$, where $\Gamma$ is a discrete,
torsion free subgroup of  ${\rm PSL}(2;{\C})$.
For each $T_i\subset\b M$ we fix a basis $\mu_i,\lambda_i$ of
$H_1(T_i;{\Z})$. We define the surgered manifold $M((a_1,b_1),...,(a_p,b_p))$
as follows:

(1) if $(a_i,b_i)=\infty$ then $T_i$ is left unfilled,

(2) if $(a_i,b_i)$ are coprime  then we perform a $(a_i,b_i)$-Dehn
filling on $T_i$ by gluing the solid torus $V={\bf
D}^2\times{\S}^1$, with basis $(m,l)$ so that the meridian $m$ is
identified with the isotopy class of the simple closed curve
$a_i\mu_i+b_i\lambda_i$ in $T_i$,

(3) otherwise, denote by $q_i>1$ the greatest common divisor of
$a_i$ and $b_i$ so that $a_i=q_ir_i$ and $b_i=q_is_i$ with
$(r_i,s_i)$ coprime. We denote by $\c{R}_{q_i}$ the rotation of
order $q_i$ centered at $0$ in ${\R}^2$, by ${\bf D}^2$ the closed unit
$2$-disk and by ${\bf D}^2/\c{R}_{q_i}$ the singular disk under the
action of the group generated by  $\c{R}_{q_i}$. Then we glue the
singular solid torus $V(q_i)={\bf D}^2/\c{R}_{q_i}\times{\S}^1$ so
that its meridian $m$ is identified with the isotopy class of the
simple closed curve $r_i\mu_i+s_i\lambda_i$ in $T_i$. Let's recall
the Thurston's Hyperbolic Dehn surgery theorem, \cite[5.8.2]{Th1}
(see also \cite{BMP} and \cite{DuM}).
 \begin{theorem}\label{surgered}
Let $M$ be compact oriented $3$-manifold  with  toral boundary
$T_1\cup...\cup T_p=\b M$ whose interior admits a complete
hyperbolic structure. Then there is a real number $C>0$ such that
if $\|(a_i,b_i)\|_2> C$ for $i=1,...,p$, then the
surgered space $M((a_1,b_1),...,(a_p,b_p))$ is a complete hyperbolic
orbifold.
 \end{theorem}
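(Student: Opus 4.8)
The plan is to derive the theorem from the deformation theory of the complete hyperbolic structure on $\mathrm{int}\,M$, via the holonomy description of (possibly incomplete) hyperbolic structures. Write $\Gamma=\pi_1 M$ and let $\rho_0\co\Gamma\to{\rm PSL}(2;{\C})$ be the discrete faithful holonomy of the given complete structure, so that each peripheral subgroup $\pi_1 T_i$ is sent to a rank-two group of parabolics fixing a common point of $\partial{\Hi}^3$. Nearby representations $\rho$ (up to conjugacy) correspond to nearby hyperbolic structures on $\mathrm{int}\,M$ through the developing map, and the strategy is to produce, for each large tuple $((a_i,b_i))_i$, a representation $\rho$ whose associated incomplete structure has metric completion equal to $M((a_1,b_1),\dots,(a_p,b_p))$ carrying a complete hyperbolic orbifold metric.

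First I would pin down the local structure of the character variety at $\rho_0$. By Weil--Garland rigidity the relevant deformations are governed by $H^1(\Gamma;\mathfrak{sl}(2,{\C})_{\mathrm{Ad}\,\rho_0})$, and a ``half lives, half dies'' argument together with Poincar\'e--Lefschetz duality on the cusped manifold shows this space has complex dimension exactly $p$, one parameter per cusp. Concretely, for each $T_i$ I would introduce the complex length $u_i$ of the holonomy of the meridian $\mu_i$ (the logarithm of the derivative of $\rho(\mu_i)$ at its fixed point), normalized so that $u_i=0$ precisely at the complete structure. The key analytic fact I would establish next is that $(u_1,\dots,u_p)$ is a holomorphic coordinate system on a neighbourhood of $\rho_0$ in the character variety; in particular every sufficiently small value of $(u_i)$ is realized by a genuine, locally hyperbolic deformed structure.

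With these coordinates in hand, the completion is controlled cusp by cusp. For a deformation with $u_i\ne0$ the elements $\rho(\mu_i),\rho(\lambda_i)$ are commuting loxodromics with a common axis and complex lengths $u_i$ and $v_i=\tau_i u_i+O(u_i^2)$, where $\tau_i=z_0$ is the modulus of the Euclidean cusp as in Remark \ref{zag}. To realize the $(a_i,b_i)$-filling I would impose that the holonomy of $a_i\mu_i+b_i\lambda_i$ be a rotation by $2\pi$, so that the filling curve bounds a developed disk and the metric completion adjoins a closed geodesic core; that is, I solve the generalized surgery equation
\begin{equation}
a_i u_i + b_i v_i = 2\pi i .
\end{equation}
Since $v_i=\tau_i u_i+O(u_i^2)$, this gives $u_i=2\pi i/(a_i+b_i\tau_i)+O(\|(a_i,b_i)\|_2^{-2})$, so $u_i\to0$ as $\|(a_i,b_i)\|_2\to\infty$. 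Hence there is a constant $C>0$ such that whenever $\|(a_i,b_i)\|_2>C$ for all $i$, the solution $(u_i)$ lies inside the coordinate neighbourhood above and the associated incomplete structure completes --- in the coprime case by adding a geodesic, and in the non-coprime case (greatest common divisor $q_i>1$) by adding a cone geodesic of cone angle $2\pi/q_i$ --- to the complete hyperbolic orbifold $M((a_1,b_1),\dots,(a_p,b_p))$.

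The main obstacle is the middle step: that the $u_i$ genuinely are local coordinates and, above all, that an algebraic (holonomy-level) deformation is realized by an honest deformation of the developing map whose metric completion has the asserted global geometry. Controlling the geometry requires understanding how the thick--thin decomposition of $\mathrm{int}\,M$ varies with $\rho$: one must show the thick part deforms quasi-isometrically while each cusp neighbourhood opens into a tube around a short geodesic, and that no collapse occurs once $\|(a_i,b_i)\|_2$ is large. This analytic control, rather than the formal surgery equation, is the technical heart of the argument, and it is precisely what forces the uniform threshold $C$.
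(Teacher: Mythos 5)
The first thing to note is that the paper does not prove this statement at all: it is quoted as Thurston's Hyperbolic Dehn Surgery Theorem, with the proof deferred to the references (\cite[5.8.2]{Th1}; see also \cite{BMP} and \cite{DuM}), the only added material being Remark \ref{universal constant} on getting a constant independent of the manifold via the $2\pi$-lemma. So there is no in-paper argument to compare yours against; the comparison must be with the standard published proofs, and your outline is exactly that standard route: deformation theory at the discrete faithful holonomy $\rho_0$, complex-length coordinates $u_i$ at the cusps, the generalized surgery equations $a_iu_i+b_iv_i=2\pi i$ (which, as you correctly observe, handle the non-coprime case uniformly, yielding cone angle $2\pi/q_i$ along the core), and completion of the deformed structure by adjoining (cone) geodesics.

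As a proof, however, your text is an outline rather than an argument, and you say so yourself. Two steps carry essentially all of the difficulty. First, the claim that $(u_1,\dots,u_p)$ is a holomorphic coordinate system near $\rho_0$ --- equivalently, that every small value of $(u_i)$ is realized by an actual deformed structure --- is precisely the point where Thurston's original notes were considered incomplete; it is filled either by the symplectic/duality argument of Neumann--Zagier \cite{NZ} or by later treatments such as the one in \cite{BMP}, and your appeal to ``Weil--Garland rigidity plus half-lives-half-dies'' only gives the dimension count, not the local surjectivity onto structures. Second, the passage from a solution of the holonomy equations to the geometric conclusion --- that each deformed cusp is an incomplete tube whose metric completion adds the core (cone) geodesic, that nothing collapses, and that the completed space is the complete hyperbolic orbifold $M((a_1,b_1),\dots,(a_p,b_p))$ with a uniform threshold $C$ --- is the thick--thin/completion analysis you explicitly defer. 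You identify both obstacles accurately, but naming them is not overcoming them: as a standalone proof the proposal is incomplete, while as a reconstruction of the strategy behind the theorem the paper merely quotes, it is faithful.
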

 The complete hyperbolic metric on $M$ corresponds to
a faithful, discrete representation $\rho_{d_0}\co\pi_1M\to{\rm
PSL}(2;{\C})$ such that, up to conjugation,
$\rho_{d_0}(\mu_i)=\begin{pmatrix}
1&1 \\
0&1
\end{pmatrix}$ and $\rho_{d_0}(\lambda_i)=\begin{pmatrix}
1&\eta_i \\
0&1
\end{pmatrix}$.
Theorem \ref{surgered} claims that if each $(a_i,b_i)$ satisfies
$\sqrt{a^2_i+b_i^2}>C$, for $i=1,...,p$, then the former
representation  may be modified such that, up to conjugation,

 $\rho_d(\mu_i)=\begin{pmatrix}
e^{2i\pi\alpha_i}&0 \\
0&e^{-2i\pi\alpha_i}
\end{pmatrix}$ and $\rho_d(\lambda_i)=\begin{pmatrix}
e^{2i\pi\beta_i}&0 \\
0&e^{-2i\pi\beta_i}
\end{pmatrix}$
 and the induced structure on ${\rm
int}M$ is a non-complete hyperbolic metric that can be completed in
the surgered hyperbolic orbifold  $M((a_1,b_1),...,(a_p,b_p))$. Moreover Thurston's
Theorem shows that if there exists $i$ such that
$(a_i,b_i)\not=\infty$ then $${\rm vol}_{{\Hi}^3}M((a_1,b_1),...,(a_p,b_p))<{\rm
vol}_{{\Hi}^3}M$$ and $$\lim_{(a_1,b_1)\to\infty,...,(a_p,b_p)\to\infty}{\rm
vol}_{{\Hi}^3}M((a_1,b_1),...,(a_p,b_p))={\rm vol}_{{\Hi}^3}M$$

\begin{remark}\label{universal constant} We denote by $M_\text{max}$ the interior of $M$ with a
system of maximal cusps removed. Now we identify $M$ with
$M_\text{max}$, then $\partial M$ has a Euclidean metric induced
from the hyperbolic metric and each closed Euclidean geodesic in
$\partial M$ has the induced length. The so called $2\pi$-lemma
claims that  $M((a_1,b_1),...,(a_p,b_p))$ is hyperbolic if  the geodesic
corresponding to $(a_i, b_i)$ has length $> 2\pi$ for $i=1,...,p$ (see \cite{BH} for
example). The first and the second shortest simple geodesics on each
component $T_i$ of $\b M$ must form a basis, and under this basis the norm
of a curve $(a, b)$ defined by $\sqrt{a^2+b^2}$ is equivalent to the
Euclidean length in $T_i$ (\cite[see page 309]{HRW} for example).
So  under such a basis, there is a universal constant $C$ such that,
for any  one cusped hyperbolic manifold, if $ \sqrt{a_i^2+
b_i^2}>C$, then $M(a_i, b_i)$ is hyperbolic.
\end{remark}

 \subsection{Seifert geometry}

We consider the group ${\rm PSL}(2;{\R})$  as the orientation
preserving isometries of the hyperbolic
  $2$-space ${\Hi}^2=\{z\in{\C}, \Im(z)>0\}$ with $i$ as a base point. In this way ${\rm PSL}(2;{\R})$ is a (topologically trivial) circle bundle over ${\Hi}^2$.
  Denote by $p\co\t{{\rm SL}_2(\R)}\to{\rm PSL}(2;{\R})$ the universal covering of ${\rm PSL}(2;{\R})$ with the induced metric. Then $\t{{\rm SL}_2(\R)}$
  is a (topologically trivial) line bundle over ${\Hi}^2$. For any $\alpha\in{\R}$, denote by ${\rm sh}(\alpha)$ the element of $\t{{\rm SL}_2(\R)}$
  whose projection into ${\rm PSL}(2;{\R})$ is given by $\begin{pmatrix}
\cos(2\pi\alpha)&\sin(2\pi\alpha) \\
-\sin(2\pi\alpha)&\cos(2\pi\alpha)
\end{pmatrix}$. Then the set $\{{\rm sh}(\alpha), n\in{\Z}\}$,
  is the kernel of $p$ as well as the center of $\t{{\rm SL}_2(\R)}$, acting by integral translation along the fibers of $\t{{\rm SL}_2(\R)}$.
  By extending this ${\Z}$-action on the fibers to the ${\R}$-action we get the whole identity component of the isometry group of $\t{{\rm SL}_2(\R)}$.
  To summarize we have the following diagram of central extensions
$$\xymatrix{
\{0\} \ar[r] \ar[d] & {\Z} \ar[r] \ar[d] & \t{{\rm SL}_2(\R)}\ar[r] \ar[d] & {\rm PSL}(2;{\R})\ar[r] \ar[d] & \{1\} \ar[d] \\
\{0\} \ar[r]  & {\R} \ar[r]  & {\rm Iso}_e(\t{{\rm SL}_2(\R)})
\ar[r]  & {\rm PSL}(2;{\R})\ar[r]  & \{1\} }$$ In particular the
group ${\rm Iso}_e(\t{{\rm SL}_2(\R)})$ is generated by $\t{{\rm
SL}_2(\R)}$ and the image of ${\R}$ which intersect together in the
image of ${\Z}$.

\begin{remark}\label{SL}
The Lie group  ${\rm Iso}_e(\t{{\rm SL}_2(\R)})$ can be thought of as a
quotient of the direct product ${\R}\times\t{{\rm SL}_2(\R)}$  -
(where each element $x$ on ${\R}$ is naturally identified with the
translation $\tau_x$ of length $x$) - under the relation
$(x,h)\sim({x'},h')$ if and only if there exists an integer
$n\in{\Z}$ such that ${x'}-x=n$ and $h'={\rm sh}(-n)\circ h$. In
this way ${\rm Iso}_e(\t{{\rm SL}_2(\R)})$ can be identified with
${\R}\times_{\Z}\t{{\rm SL}_2(\R)}$ and it is easily checked that
this group is homotopic to the circle.
\end{remark}

 Let  $F_{g,n}$ be an
oriented  $n$-punctured surface of genus $g\geq 0$ with boundary
components $s_1,...,s_n$ with $n\ge 0$. Then
$N'=F_{g,n}\times{\S}^1$ is oriented if ${\S}^1$ is oriented. Let
$h_i$ be the oriented ${\S}^1$-fiber on the torus $T_i=s_i\times h_i$. We say that $(s_i,h_i)$ is a \emph{section-fiber}  basis of $T_i$. Let $0\leq s\leq n$. Now attach $s$ solid tori $V_i$'s to the
boundary tori $T_i$'s of $N'$ such that the meridian of $V_i$ is identified
with the slope ${a_i}s_i+{b_i}h_i$ where $a_i>0,
(a_i,b_i)=1$ for $i=1,...,s$. Denote the resulting manifold
by $\left(
g,n-s;\frac{b_1}{a_1},\cdots,\frac{b_s}{a_s}\right)$
which has the Seifert fiber structure extended from the circle
bundle structure of $N'$. Each orientable Seifert fibered space with
orientable base $F_{g,n-s}$ and with $\leq s$ exceptional fibers is
obtained in such a way.
If $N$ is closed, i.e. if $s=n$, then  define the Euler number of
the Seifert fibration by
$$e(N)=\sum_1^s \frac{b_i}{a_i}\in\mathbb{Q}$$
and  the Euler characteristic of the orbifold $O(N)$ by
$$\chi_{O(N)}=2-2g-\sum_{i=1}^s\left(1-\frac 1{a_i}\right)\in\mathbb{Q}.$$

From \cite{BG2} we know that a closed orientable 3-manifold $N$ supports the $\widetilde {{\rm
SL}_2}$-geometry, i.e.  there is a discrete and faithful representation
$\psi: \pi_1N\to {\rm Iso} \widetilde {{\rm SL}_2}$, if and only if
$N$
 is a Seifert manifold with non-zero Euler number $e(N)$ and
negative Euler characteristic $\chi_{O(N)}$.

\section{Volume of representations of closed manifolds}
Given a  semi-simple, connected Lie  group $G$ and a closed oriented
manifold $M^n$ of the same dimension than  the contractible space
$X^n=G/K$, where $K$ is a maximal compact subgroup of $G$, we can
associate, to each representation $\rho\co\pi_1M\to G$, a volume
${\rm vol}_G(M,\rho)$ in the following ways.

\subsection{Volume of representations}
 First of all, fix a
$G$-invariant Riemannian metric $g_X$ on $X$, then denote by
$\omega_X$ the corresponding $G$-invariant volume form. We think of the elements $\t{x}$ of  $\t{M}$
as the homotopy classes of paths $\gamma\co[0,1]\to M$ with $\gamma(0)=x_{0}$ which are acted by $\pi_1(M,x_0)$ by setting $[\sigma].\t{x}=[\sigma.\gamma]$, where $.$ denotes the paths composition.

A developing map $D_{\rho}\co\t{M}\to X$ associated to $\rho$ is a
$\pi_1M$-equivariant map  from the universal covering $\t{M}$ of
$M$, acted by $\pi_1M$, to $X$, endowed
with the $\pi_1M$ action induced by $\rho\co\pi_1M\to G$. That is to say:
for any $x\in \t{M}$ and $\alpha \in \pi_1M$, then

$$D_{\rho}(\alpha.x)=\rho(\alpha)^{-1}D_{\rho}(x)$$

Such a map does exist and can be constructed explicitly as in
\cite{BCG}: Fix a triangulation $\Delta_M$ of $M$. Then its lift is a
triangulation $\Delta_{\t{M}}$ of $\t{M}$, which is $\pi_1M$-equivariant.
Then fix a fundamental domain $\Omega$ of $M$ in $\t{M}$ such that
the zero skeleton $\Delta^0_{\t{M}}$ misses the frontier of $\Omega$. Let
$\{x_1,...,x_l\}$ be  the vertices of $\Delta^0_{\t{M}}$ in $\Omega$, and
let $\{y_1,...,y_l\}$ be  any $l$ points in $X$.  We first set

$$D_{\rho}(x_i)=y_i, \,\, i=1 ,..., l.$$

Next extend $D_{\rho}$ in an $\pi_1M$-equivariant way to $\Delta^0_{\t{M}}$:
For any vertex $x$ in $\Delta^0_{\t{M}}$, there is a unique  vertex
$x_i$ in $\Omega$ and $\alpha_x\in \pi_1M$ such that
$\alpha_x.x_i=x$, and we set
$D_{\rho}(x)=\rho(\alpha_x)^{-1}D_{\rho}(x_i)$. Finally we extend
$D_{\rho}$ to edges, faces, ..., and $n$-simplices of $\Delta_{\t{M}}$
by straightening the images to geodesics using the homogeneous
metric on the contractible space $X$.
This  map is unique up to equivariant homotopy. Then
$D_{\rho}^{\ast}(\omega_X)$ is a $\pi_1M$-invariant closed $n$-form
on $\t{M}$ and therefore can be thought of as a closed $n$-form on $M$. Thus define

$${\rm vol}_G(M,\rho)=\left|\int_MD_{\rho}^{\ast}(\omega_X)\right|=\left|\sum_{i=1}^s\epsilon_i {\rm vol}_X(D_{\rho}(\t\Delta_i))\right|$$
where $\{\Delta_1,...,\Delta_s\}$ are the $n$-simplices of $\Delta_M$,
$\t \Delta_i$ is a lift of $\Delta_i$ and $\epsilon_i=\pm 1$ depending on whether $D_{\rho}|\t\Delta_i$ is preserving  or reversing orientation.

\subsection{Volume of representations as a continuous cohomology class}\label{contcoh}
 Let
$o=\{K\}$ be the base point of $X=G/K$ and for any $g_1,...,g_l\in G$
denote by $\Delta(g_1,...,g_l)$ the geodesic $l$-simplex of $X$ with
vertices $\{o,g_1(o),...,g_l...g_2g_1(o)\}$.   There is a natural
homomorphism
$$H^{\ast}(\mathfrak{g},\mathfrak{k};{\R})=H^{\ast}(G{\rm -invariant\ differential\ forms\ on\ }X)\to H^{\ast}_{\rm cont}(G;{\R})$$
defined in \cite{Du}  by $\eta\mapsto
\left((g_1,...,g_l)\to\int_{\Delta(g_1,...,g_l)}\eta\right)$ which
turns out to be an isomorphism by the Van-Est Theorem \cite{V}.

Recall that for each representation $\rho\co\pi_1M\to G$ one can
associate a flat bundle over
 $M$ with fiber $X$ and group $G$ constructed as follows: $\pi_1M$ acts diagonally on the product
 $\t{M}\times X$  by the following formula
 \begin{eqnarray}
\sigma.(\t{x},g)=(\sigma.\t{x},\rho^{-1}(\sigma)g)
 \end{eqnarray}
 and we can form the quotient $\t{M}\times_\rho X=\t{M}\times X/\pi_1M$ which is the flat $X$-bundle over
 $M$ corresponding to $\rho$.

Then  for each $G$-invariant closed form $\omega$ on $X$,
$q^*(\omega)$ is a $\pi_1(M)$-invariant closed form on $\t{M}\times
X$, where $q: \t{ M}\times X\to X$ is the projection, which induces
a form $\omega'$ on $M\times _\rho X$. Then $s^*(\omega')$ is a
closed form on $M$, where $s: M\to M\times _\rho X$ is a section (since $X$ is
contractible, the sections exist and are  all  
homotopic).
Thus any representation  $\rho\co\pi_1M\to G$ leads to a natural homomorphism
$$\rho^*: H^{\ast}_{\rm cont}(G;{\R})=H^{\ast}(G{\rm -invariant\ differential\ forms\ on\ }X)\to  H^{\ast}(M;{\R})$$
induced by $\rho^*(\omega)=s^*{\omega}'$. The volume of $\rho$ is therefore defined by ${\rm
vol}_G(M,\rho)=\left|\int_M\rho^{\ast}(\omega_X)\right|$.

  The equivalence between the two definitions is immediate since the
$\pi_1M$-equivariant map ${\rm Id\times D_\rho}\co\t{M}\to \t
M\times X$ descends to a section $M\to M\times _\rho X$.

  When the situation is clear from the context,  we drop the reference to the structural group
  $G$ in the notation ${\rm vol}_G(M,\rho)$ and we denote it by ${\rm vol}(M,\rho)$.

  \subsection{Volume of representations via transversely projective foliations}\label{foliation}

Let $\mathfrak{F}$ be a co-dimension one foliation on a closed
smooth manifold $M$ determined by a 1-form $\omega$. Then by the
Froebenius Theorem one has $d\omega= \omega\wedge\delta$ for some
1-form $\delta$. It was observed by Godbillon and Vey \cite{GV} that
the 3-form $\delta \wedge
 d\delta$ is closed and the class $[\delta \wedge
 d\delta]\in H^3(M,{\R})$ depends only on the foliation $\mathfrak{F}$ (and not on the chosen form $\omega$). This cohomology class is termed
\emph{the Godbillon-Vey class} of the foliation  $\mathfrak{F}$ and denoted by
 $GV(\mathfrak{F})$.

 \begin{proposition}\label{Euler} (\cite[Proposition 1]{BG1}) Suppose $\mathfrak{F}$
  is a horizontal flat structure  on a circle  bundle ${\S}^1\to E \to M$ with structural group ${\rm PSL}_2(\R)$. Then
 $$\int_{\S^1} GV(\mathfrak{F})=4\pi^2\t{e}(E)$$
 where $\int_{\S^1}\co H^3(E)\to H^2(M)$ denotes the integration along the fiber and $\t{e}$ denotes the Euler class of the bundle.
 \end{proposition}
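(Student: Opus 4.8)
The plan is to recognise $\mathfrak F$ as the horizontal foliation of a flat ${\rm PSL}(2;\R)$-connection and to reduce the computation of $GV(\mathfrak F)$ to the structural (Maurer--Cartan) equations of ${\rm PSL}(2;\R)$. A horizontal flat structure on $\S^1\to E\to M$ with group ${\rm PSL}(2;\R)$ is the same datum as a representation $\rho\co\pi_1M\to{\rm PSL}(2;\R)$ together with the identification $\S^1=\partial\Hi^2=\mathbb{RP}^1$ on which ${\rm PSL}(2;\R)$ acts by M\"obius maps; thus $E=\t M\times_\rho\mathbb{RP}^1$ and the leaves of $\mathfrak F$ are the images of the flat (constant) slices $\t M\times\{pt\}$. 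Writing $\mathbb{RP}^1={\rm PSL}(2;\R)/B$ with $B$ the upper-triangular Borel, I would fix a reduction (a local section of the flat principal bundle $P=\t M\times_\rho{\rm PSL}(2;\R)\to E$) and pull back the Maurer--Cartan form of ${\rm PSL}(2;\R)$ to an $\mathfrak{sl}_2(\R)$-valued $1$-form $\sigma=\sigma^XX+\sigma^HH+\sigma^YY$ on $E$, where $X,H,Y$ is the standard $\mathfrak{sl}_2$-triple. Because the connection is flat, $\sigma$ still satisfies $d\sigma=-\tfrac12[\sigma,\sigma]$, and since the leaf-tangent distribution is exactly $\ker\sigma^Y$, one may take $\omega=\sigma^Y$ as a defining $1$-form of $\mathfrak F$.

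The second step is the purely algebraic computation of the Godbillon--Vey form from the structural equations. From $[H,X]=2X$, $[H,Y]=-2Y$, $[X,Y]=H$ one gets $d\sigma^Y=2\,\sigma^H\wedge\sigma^Y$, so that $d\omega=\omega\wedge\delta$ with $\delta=-2\sigma^H$; differentiating once more with $d\sigma^H=-\sigma^X\wedge\sigma^Y$ yields
$$GV(\mathfrak F)=\delta\wedge d\delta=-4\,\sigma^H\wedge\sigma^X\wedge\sigma^Y .$$
Thus $GV(\mathfrak F)$ is a fixed nonzero multiple of the bi-invariant volume form of ${\rm PSL}(2;\R)$ pulled back to $E$. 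The class $[\delta\wedge d\delta]\in H^3(E)$ is by definition $GV(\mathfrak F)$ and is independent of the chosen defining form and of the reduction, so the representative above may be used freely.

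The third step is the integration along the fibre and the comparison with the Euler class. I would parametrise the circle fibre $\S^1=\mathbb{RP}^1$ by the rotation subgroup $K$ generated by $W=X-Y$, so that along the fibre $\sigma(\partial_\phi)=W$ with period $\pi$ in ${\rm PSL}(2;\R)$, and rewrite the volume form in the basis adapted to the Cartan decomposition $\mathfrak{sl}_2(\R)=\mathfrak k\oplus\mathfrak p$. Contracting with $\partial_\phi$ and integrating over the fibre turns $GV(\mathfrak F)$ into a multiple of the area form of $\Hi^2$ on the base, i.e. into a representative of $\rho^*$ applied to the generator of $H^2_{\rm cont}({\rm PSL}(2;\R);\R)=H^2(\mathfrak g,\mathfrak k;\R)$ (the map $\rho^*$ of \S\,3.2). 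Since the real Euler class $\t e(E)$ of a flat ${\rm PSL}(2;\R)$-bundle is exactly this normalised area class, this identifies $\int_{\S^1}GV(\mathfrak F)$ with a universal constant times $\t e(E)$.

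The main obstacle is bookkeeping that universal constant and pinning it down to $4\pi^2$: this forces one to match three normalisations at once — the length $\pi$ of the circle fibre inside ${\rm PSL}(2;\R)$, the scaling between the $\mathfrak p$-part of $\sigma$ (equivalently $\sigma^X\wedge\sigma^Y$) and the genuine area form of $\Hi^2$ of curvature $-1$, and the normalisation of $\t e$ as the obstruction to lifting $\rho$ through the central extension $\t{{\rm SL}_2(\R)}\to{\rm PSL}(2;\R)$. The cleanest way to fix all constants simultaneously is to evaluate both sides on a closed Fuchsian example, $M=\Sigma_g$ with $\rho$ the holonomy of a hyperbolic structure, where $E=T^1\Sigma_g$, $\langle\t e(E),[\Sigma_g]\rangle=\pm(2g-2)$, and $\int_{T^1\Sigma_g}GV(\mathfrak F)$ equals $-4$ times the Haar volume of $\Gamma\backslash{\rm PSL}(2;\R)$ computed from the hyperbolic area $2\pi(2g-2)$ and the fibre length; comparing the two values (and fixing the orientation convention) forces the constant to be $4\pi^2$, and the general formula then follows since both sides are natural under $\rho^*$.
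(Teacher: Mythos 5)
The paper gives no proof of Proposition \ref{Euler}: it is quoted from Brooks--Goldman (\cite[Proposition 1]{BG1}) and used as a black box, so there is no internal argument to compare yours with; what you wrote is a genuine reconstruction, and it follows essentially the route of the cited source (transversely homogeneous structure plus the $\mathfrak{sl}_2$ structure equations). Your core computation is correct: with $[H,X]=2X$, $[H,Y]=-2Y$, $[X,Y]=H$, the Maurer--Cartan equation gives $d\sigma^Y=2\,\sigma^H\wedge\sigma^Y$ and $d\sigma^H=-\sigma^X\wedge\sigma^Y$, so $\omega=\sigma^Y$, $\delta=-2\sigma^H$, and the Godbillon--Vey form is $-4\,\sigma^H\wedge\sigma^X\wedge\sigma^Y$. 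Your closing scheme is also sound: both $\int_{\S^1}GV(\mathfrak{F})$ and $\t{e}(E)\otimes\R$ are $\rho^{\ast}$-images of universal classes in $H^2_{\rm cont}({\rm PSL}(2;\R);\R)\cong\R$, so a single Fuchsian evaluation pins the proportionality constant.

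Two points need tightening before this closes. (a) To fibre-integrate an honest form rather than a class, you want the defining pair $(\omega,\delta)$ globally; this is available because $P\to E$ is a principal bundle whose structure group is the Borel subgroup $B$, which is contractible and hence admits a global section --- ``local section plus independence of choices'' only yields the class. (b) The normalisation you defer to the example is exactly where the factor lives, and one of your identifications is off: the $2$-form whose descent is proportional to the hyperbolic area form is $e^1\wedge e^2$, the coframe dual to $\mathfrak{p}=\langle X+Y,H\rangle$, not $\sigma^X\wedge\sigma^Y$ (which pairs $\mathfrak{p}$ with the fibre direction $W=X-Y$). Since $\exp(tH)\cdot i$ and $\exp(t(X+Y))\cdot i$ move at hyperbolic speed $2$, one finds, in the left-invariant coframe of ${\rm PSL}(2;\R)$, that $\sigma^H\wedge\sigma^X\wedge\sigma^Y=\frac{1}{2}\,p^{\ast}\omega_{{\Hi}^2}\wedge d\theta$, where $p\co G\to G/K={\Hi}^2$ and $d\theta$ has period $\pi$ along the fibre $G/B\cong{\rm PSO}(2)$; hence $\int_{\S^1}GV(\mathfrak{F})=-2\pi\,\rho^{\ast}[\omega_{{\Hi}^2}]$, which equals $4\pi^2\t{e}(E)$ by the (standard, here unproved) fact that the flat real Euler class is $-\frac{1}{2\pi}\rho^{\ast}[\omega_{{\Hi}^2}]$. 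Concretely, on $T^1\Sigma_g$ the volume relative to $\sigma^H\wedge\sigma^X\wedge\sigma^Y$ is $\frac{1}{2}\cdot 2\pi(2g-2)\cdot\pi$, not area times fibre length; the naive product would force the constant $8\pi^2$ rather than $4\pi^2$. With that factor of $\tfrac12$ accounted for, your argument is complete and the constant is indeed $4\pi^2$.
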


Let $M$ be a closed orientable 3-manifold and $\phi\co\pi_1M\to {\rm
PSL}_2(\R)$ be a representation with zero Euler class. Since ${\rm
PSL}_2(\R)$ acts on ${\S}^1$ then one can consider the corresponding
flat circle bundle $M\times_{\phi}{\S}^1$ over $M$ and the
associated horizontal $({\rm PSL}_2(\R),{\S}^1)$-foliation
$\mathfrak{F}_{\phi}$. Since the Euler class of $\phi$ is zero we
can choose a section $\delta $ of $M\times_{\phi}{\S}^1\to M$.
Brooks and Goldman (see \cite[Lemma 2]{BG1}) showed that
$\delta^{\ast}GV(\mathfrak{F}_{\phi})$ only depends on $\phi$ (and
not on a chosen section $\delta$). Then they  defined  the \emph{Godbillon Vey} invariant of
$\phi$ by setting
$$GV(\phi)=\int_M\delta^{\ast}GV(\mathfrak{F}_{\phi})$$

\begin{remark}
Assume that $M$ is already endowed with a $({\rm
PSL}_2(\R),{\S}^1)$-foliation $\mathfrak{F}$. Then there is a
canonical way to define a flat ${\S}^1$-bundle $E$ over $M$ with
structure group ${\rm PSL}_2(\R)$, a horizontal foliation
$\mathfrak{F}_\phi$ on $E$, and a section $s: M\to E$ such that
$\mathfrak{F}=s^*\mathfrak{F}_\phi$.
  Then if $\phi$   denotes associated representation   we get (\cite{BG1} Lemma 1)
 $$GV(\phi)=\int_MGV(\mathfrak{F})$$
\end{remark}

For a given representation $\phi\co\pi_1M \to{\rm PSL}_2(\R)$, $\phi$
lifts to $\t\phi\co\pi_1M \to{\t{{\rm SL}_2(\R)}}$ if and only if
$\t{e}(\phi)=0$ in $H^2(M,{\Z})$.
The following fact has been verified in \cite{BG1}.

\begin{proposition}\label{CV=Vol} Let $M$ be a closed oriented
$3$-manifold, let $\phi\co\pi_1M\to {\rm PSL}_2(\R)$ be a representation
with zero Euler class  and fix a lift  $\t\phi\co\pi_1M \to
 \t{{\rm SL}_2(\R)}$ of $\phi$. Then
$$GV(\phi)={\rm vol}_{\t{{\rm SL}_2(\R)}}(M,\t\phi)$$
where $\t{{\rm SL}_2(\R)}$ is viewed as a semi-simple Lie group acting on itself by multiplication with corresponding homogeneous space $\t{{\rm SL}_2(\R)}$.
\end{proposition}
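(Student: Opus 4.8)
The plan is to reduce the identity to a comparison of two left-invariant $3$-forms on $\t{{\rm SL}_2(\R)}$ and then to transport that comparison to $M$ by naturality of the developing data. Write $G_0={\rm PSL}_2(\R)$ and let $p\co\t{{\rm SL}_2(\R)}\to G_0$ be the universal covering, so that $\t{{\rm SL}_2(\R)}$ is a $3$-dimensional Lie group with Lie algebra $\mathfrak{sl}_2(\R)$. Its maximal compact subgroup is trivial, so the homogeneous space is $\t{{\rm SL}_2(\R)}$ itself and the invariant volume form $\omega_X$ is a fixed normalization of a left-invariant $3$-form. Fix the basis $X_+,X_-,H$ of $\mathfrak{sl}_2(\R)$ with $[H,X_\pm]=\pm 2X_\pm$ and $[X_+,X_-]=H$, and let $\eta^+,\eta^-,\eta^0$ be the dual Maurer--Cartan forms, so that $\omega_X=c_0\,\eta^+\wedge\eta^-\wedge\eta^0$ for a normalization constant $c_0$.

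First I would run the structural (Maurer--Cartan) computation. The equations $d\eta^\pm=\mp 2\,\eta^0\wedge\eta^\pm$ and $d\eta^0=-\eta^+\wedge\eta^-$ show that $\eta^-=0$ is integrable, with leaves the cosets of the Borel subgroup $B$ and transverse model $G_0/B={\S}^1=\partial{\Hi}^2$; this is the tautological transversely projective foliation. Writing $d\eta^-=\eta^-\wedge\delta_0$ with $\delta_0=-2\eta^0$, the Godbillon--Vey recipe gives the invariant $3$-form $\delta_0\wedge d\delta_0=4\,\eta^0\wedge d\eta^0=-4\,\eta^0\wedge\eta^+\wedge\eta^-$, which is a nonzero multiple of $\eta^+\wedge\eta^-\wedge\eta^0$, hence of $\omega_X$. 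This is the heart of the matter: on $\t{{\rm SL}_2(\R)}$ the Godbillon--Vey form of the projective foliation and the invariant volume form are proportional, with a constant fixed once the two normalizations are chosen.

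Next I would descend to $M$. The representation $\phi$ makes $E=M\times_\phi{\S}^1$ a flat $(G_0,{\S}^1)$-bundle whose horizontal foliation $\mathfrak{F}_\phi$ is the pullback of the tautological projective foliation, so that $GV(\mathfrak{F}_\phi)$ is the pullback of the invariant form $\delta_0\wedge d\delta_0$. The chosen lift $\t\phi$, which exists precisely because $\t{e}(\phi)=0$, produces a developing map $D_{\t\phi}\co\t M\to\t{{\rm SL}_2(\R)}$, and the section $\delta$ used to define $GV(\phi)$ is exactly the reduction of $E$ furnished by this lift. The claim to establish is $\delta^{\ast}GV(\mathfrak{F}_\phi)=D_{\t\phi}^{\ast}\bigl(p^{\ast}(\delta_0\wedge d\delta_0)\bigr)$ as $3$-forms on $M$; since $p^{\ast}(\delta_0\wedge d\delta_0)$ is (the same constant times) $\omega_X$, this identifies $\delta^{\ast}GV(\mathfrak{F}_\phi)$ with a multiple of $D_{\t\phi}^{\ast}\omega_X$. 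With the standard normalization of $\omega_X$ the constant is $1$, so integrating over $M$ and taking absolute values (the volume is defined with $\lvert\cdot\rvert$, which also disposes of the sign) yields $GV(\phi)={\rm vol}_{\t{{\rm SL}_2(\R)}}(M,\t\phi)$.

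The main obstacle is the identification in the third step: matching the section $\delta$ of the circle bundle with the developing map $D_{\t\phi}$ and checking that the two pullbacks agree as \emph{forms}, not merely as cohomology classes. Concretely one must verify that the lift $\t\phi$ supplies the same trivialization data on both sides, using that $p\co\t{{\rm SL}_2(\R)}\to G_0$ induces an isomorphism on $H^3_{\rm cont}$ (so $\phi^{\ast}$ and $\t\phi^{\ast}$ agree on the relevant degree-three continuous cohomology) and that the vanishing Euler class is exactly the common obstruction to the existence of both $\delta$ and $\t\phi$. Once this compatibility is in place, the Maurer--Cartan computation of the second step supplies the numerical factor and the proposition follows.
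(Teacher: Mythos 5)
The first thing to note is that the paper does not prove this proposition: it is quoted from Brooks--Goldman (``The following fact has been verified in \cite{BG1}''), so the only available comparison is with the argument of that reference, whose mechanism your outline correctly reproduces: both invariants should be integrals over $M$ of pullbacks of invariant $3$-forms on the model, and the Maurer--Cartan computation identifies the Godbillon--Vey form of the tautological foliation $\eta^-=0$ with a multiple of the invariant volume form. Your structure equations and the identity $\delta_0\wedge d\delta_0=-4\,\eta^0\wedge\eta^+\wedge\eta^-$ are correct. However, the two steps that carry the actual content of the proposition are both left open. The first is the bridge you yourself call ``the main obstacle'': the forms $\eta^{\pm},\eta^0$ live on ${\rm PSL}_2(\R)$, not on $E=M\times_\phi{\S}^1$, so the phrase ``$GV(\mathfrak{F}_\phi)$ is the pullback of the invariant form $\delta_0\wedge d\delta_0$'' has no meaning until you say along which map, and the ingredients you offer (the isomorphism on $H^3_{\rm cont}$ induced by $p$, the Euler class as common obstruction) do not by themselves produce one. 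What closes this gap is the flat principal bundle $P=M\times_\phi{\rm PSL}_2(\R)$: the Maurer--Cartan components descend to $P$; the projection $P\to E=P/B$, where $B$ is the (contractible) Borel stabilizer of a point of ${\S}^1$, pulls $\mathfrak{F}_\phi$ back to the foliation defined by the descended $\eta^-$, so that $GV(\mathfrak{F}_\phi)$ is determined, through this fibration with contractible fibers, by the class of the descended form $-4\,\eta^0\wedge\eta^+\wedge\eta^-$; the lift $\t\phi$ furnishes a section of $P$ lying over a section of $E$ and compatible with the section of $\t{M}\times_{\t\phi}\t{{\rm SL}_2(\R)}$ that computes ${\rm vol}_{\t{{\rm SL}_2(\R)}}(M,\t\phi)$; and one must also invoke the fact, quoted in the paper just before the statement, that $\int_M\delta^{\ast}GV(\mathfrak{F}_\phi)$ is independent of the chosen section. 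Note that equality of cohomology classes suffices here, since everything is integrated over the closed manifold $M$; your insistence that the identification hold ``as forms'' sets up a harder problem than the one that needs solving, while the problem that does need solving is not solved.

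The second gap is the constant. Proportionality of $\delta_0\wedge d\delta_0$ to $\omega_X$ is automatic and requires no computation: with invariant defining data, $\delta_0\wedge d\delta_0$ is an invariant $3$-form on a $3$-dimensional homogeneous space, and the space of such forms is one-dimensional. So your ``heart of the matter'' establishes only what comes for free, and the entire quantitative content of the proposition---that the ratio is exactly $1$---is dismissed by the sentence ``with the standard normalization of $\omega_X$ the constant is $1$,'' which is an assertion, not a proof. To verify it one must use the metric the paper actually fixes (Section 2.3: $\t{{\rm SL}_2(\R)}$ is the universal cover of ${\rm PSL}_2(\R)$ viewed as the unit tangent bundle of ${\Hi}^2$, so fibers have length $2\pi$), exhibit an orthonormal frame, namely $\{H/2,\ (X_++X_-)/2,\ (X_+-X_-)/2\}$, and compute $\omega_X=\pm 4\,\eta^+\wedge\eta^-\wedge\eta^0$ in your coframe; only then does the comparison with $\delta_0\wedge d\delta_0=-4\,\eta^+\wedge\eta^-\wedge\eta^0$ give the ratio $\pm 1$, the sign being absorbed by the absolute value in the definition of the volume. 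With an equally ``standard'' normalization in which the fiber has length $\pi$, the constant would be $2$ and the stated equality false; this sensitivity is exactly why the companion formulas of the paper, such as $\int_{{\S}^1}GV(\mathfrak{F})=4\pi^2\t{e}(E)$ in Proposition \ref{Euler} and the values $4\pi^2\chi^2_{O(N)}/|e(N)|$ coming from Proposition \ref{SvSm}, carry the factor $4\pi^2$. The constant must be computed, not invoked.
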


\section{Seifert  volume of representations of Seifert manifolds}
This section is devoted to the proof of Proposition \ref{Seifert volumes of Seifert manifolds}.
Let $N$ be a closed oriented $\t{{\rm SL}_2(\R)}$-manifold whose
base $2$-orbifold is an orientable  hyperbolic
$2$-orbifold $\c{O}$ with positive genus $g$ and  $p$ singular points. Then, keeping the same notation as in section 2.3, we have a
presentation
$$\pi_1N=\l \alpha_1,\beta_1,...,\alpha_g,\beta_g,s_1,...,s_p,h :  $$
$$s_1^{a_1}h^{b_1}=1,..., s_p^{a_p}h^{b_p}=1, [\alpha_1,\beta_1]...[\alpha_g,\beta_g]=s_1....s_p\r$$
with the condition $e=\sum_ib_i/a_i\not=0$.
The following result
of Eisenbud-Hirsch-Neumann \cite{EHN}, which extends the result of
Milnor-Wood's (see \cite{Mi} and \cite{Wo}) from circle bundles to Seifert manifolds, is very
useful for our purpose.

\begin{theorem}\label{Eisenbud-Hirsch-Neumann} Suppose $N$ is a closed orientable Seifert manifold with
a regular fiber $h$ and  base of genus $>0$. Then

(1) (\cite[Corollary 4.3]{EHN}) There is a $({\rm PSL}_2\R,{\S}^1)$-horizontal foliation on $N$ if and only if there is a representation
$\t{\phi} : \pi_1(N)\to \widetilde{{\rm SL}_2\R}$ such that
$\t{\phi} (h)= {\rm sh}(1)$;

(2) (\cite[Theorem 3.2 and Corollary 4.3]{EHN}) Suppose $N=(g,0;
a_1/b_1,..., a_n/b_n)$, then there is a $({\rm
PSL}_2\R,{\S}^1)$ horizontal foliation on $N$ if and only if

$$\sum \llcorner{ {b_i}/{a_i}}\lrcorner \le -\chi(F_g); \,\,\, \sum \ulcorner{b_i}/{a_i}\urcorner \ge
\chi(F_g),$$ 
\end{theorem}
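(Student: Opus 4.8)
The plan is to convert the geometric statement about foliations into an algebraic statement about lifts of representations, and then to extract the numerical criterion from Milnor--Wood type inequalities via the translation number quasimorphism on $\t{{\rm SL}_2(\R)}$. For part (1), the key object is the \emph{foliation--representation dictionary}. A $({\rm PSL}_2\R,{\S}^1)$-horizontal foliation $\mathfrak F$ is a transversely projective foliation transverse to the Seifert fibres, so as in Section \ref{foliation} it carries a holonomy homomorphism $\phi\co\pi_1N\to{\rm PSL}_2\R$ together with a $\phi$-equivariant developing map $D\co\t N\to{\S}^1=\R P^1$. Lifting $D$ to $\hat D\co\t N\to\R$ lifts the holonomy to $\t\phi\co\pi_1N\to\t{{\rm SL}_2(\R)}$, and horizontality—namely that $D$ is a degree-one local diffeomorphism along each fibre, so $\hat D$ advances by exactly one unit as one travels once around the regular fibre—forces $\t\phi(h)={\rm sh}(1)$, the unit translation in the central $\Z$ of the extension displayed in Section 2.3. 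Conversely, given $\t\phi$ with $\t\phi(h)={\rm sh}(1)$, I would project to $\phi\co\pi_1N\to{\rm PSL}_2\R$; since ${\rm sh}(1)$ generates the kernel of $\t{{\rm SL}_2(\R)}\to{\rm PSL}_2\R$, we get $\phi(h)=1$, so $\phi$ descends to the orbifold group $\pi_1^{\rm orb}(\c O)=\pi_1N/\l h\r$ and acts on ${\S}^1$. Suspending this action over $\c O$ yields a flat ${\S}^1$-bundle whose horizontal (suspension) foliation is transverse to the fibres, and the normalisation $\t\phi(h)={\rm sh}(1)$ pins down the Seifert invariants so that this foliation lives on $N$ itself.

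For part (2), I would translate the existence of such a $\t\phi$ into inequalities using the presentation of $\pi_1N$. Writing $\t S_i=\t\phi(s_i)$, the relation $s_i^{a_i}h^{b_i}=1$ together with $\t\phi(h)={\rm sh}(1)$ forces $\t S_i^{a_i}={\rm sh}(-b_i)$; hence each projection $S_i\in{\rm PSL}_2\R$ has finite order dividing $a_i$, so $S_i$ is elliptic, and the homogeneous translation number quasimorphism $\t{{\rm rot}}\co\t{{\rm SL}_2(\R)}\to\R$, normalised by $\t{{\rm rot}}({\rm sh}(n))=n$, is forced to take the value $\t{{\rm rot}}(\t S_i)=-b_i/a_i$. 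Thus $\phi$ restricts to a representation of the free group $\pi_1(F_{g,p})$ of the punctured base whose boundary holonomies are elliptic with prescribed rotation numbers, and $\t\phi$ is precisely a lift of this boundary-prescribed representation that closes up along the relation $\prod_{j=1}^g[\alpha_j,\beta_j]=s_1\cdots s_p$. The obstruction to closing up is the \emph{relative Euler number}, which the relative Milnor--Wood inequality (\cite{Mi}, \cite{Wo}) bounds in terms of $-\chi(F_g)$. Because $\t{{\rm rot}}$ is only a quasimorphism of defect $1$, the admissible integer Euler numbers for elliptic boundary data with rotation numbers $b_i/a_i$ fill an integer interval whose endpoints are $\sum_i\llcorner b_i/a_i\lrcorner$ and $\sum_i\ulcorner b_i/a_i\urcorner$ (the rounding at each singular fibre records by which integer a leaf spirals into the exceptional fibre); after the sign bookkeeping $\llcorner -x\lrcorner=-\ulcorner x\urcorner$, demanding that this interval be compatible with $[\chi(F_g),-\chi(F_g)]$ produces exactly the two stated inequalities. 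This establishes necessity.

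The hard part will be the converse, the \emph{realisation}: assuming the two inequalities hold, one must actually construct $\t\phi$, equivalently $\mathfrak F$. Here I would build a representation of $\pi_1(F_{g,p})$ on the punctured surface with the prescribed elliptic boundary holonomies and then verify that the boundary relation closes up exactly in $\t{{\rm SL}_2(\R)}$. The crux is that this requires a \emph{sharp} relative Milnor--Wood statement: one must realise the precise central value $\t S_1\cdots\t S_p$ as a product of $g$ commutators, not merely match translation numbers up to the quasimorphism defect. The natural route is an inductive, handle-by-handle construction in which each commutator $[\t A_j,\t B_j]$ contributes a translation number sweeping the open interval $(-1,1)$ while each elliptic vertex contributes its prescribed rotation number up to an integer shift, and one shows these contributions can be assembled to hit the required value exactly whenever the floor/ceiling inequalities are satisfied. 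Controlling this exact assembly—rather than the easy inequality—is the technical heart of the Eisenbud--Hirsch--Neumann analysis, and it is the step I expect to be the main obstacle.
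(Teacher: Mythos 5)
The first thing to say is that the paper contains no proof of this statement: Theorem \ref{Eisenbud-Hirsch-Neumann} is quoted verbatim from Eisenbud--Hirsch--Neumann (the bracketed citations to \cite{EHN} are part of the statement) and is then used as a black box in the proof of Proposition \ref{SvSm}. So your attempt has to be judged as a reconstruction of the EHN argument itself. Your part (1) is essentially the right mechanism and matches their Corollary 4.3: holonomy plus developing map in one direction, suspension of the induced $\pi_1^{\rm orb}$-action on ${\S}^1$ in the other, with the normalisation $\t\phi(h)={\rm sh}(1)$ identifying the suspended Seifert manifold with $N$ (one should add the small check that $\phi(s_i)$ is elliptic of order exactly $a_i$ -- which follows from $\gcd(a_i,b_i)=1$ -- so that the suspension is a manifold rather than a genuine orbifold). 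Your reduction of part (2) to the algebraic problem $\prod_j[\t A_j,\t B_j]=\t S_1\cdots\t S_p$ with ${\rm rot}(\t S_i)=-b_i/a_i$ is also correct.

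The genuine gaps are in both directions of part (2). For necessity, your claim that ``defect-$1$ bookkeeping'' for the translation quasimorphism produces exactly the stated inequalities does not work: quasimorphism counting bounds the translation number of a product of $g$ commutators by $2g-1$, while the theorem needs the sharp bound $2g-2=-\chi(F_g)$ for central values. Concretely, over a genus-one base with one fiber of invariant $b_1/a_1=1$ the theorem forbids a horizontal foliation, i.e.\ forbids $[\t A,\t B]={\rm sh}(-1)$; your estimate only gives $|{\rm rot}([\t A,\t B])|\le 1$, which does not exclude this. (It is excluded because a commutator in $\t{{\rm SL}_2(\R)}$ whose image elements commute in ${\rm PSL}_2(\R)$ lies in a lifted one-parameter subgroup and is trivial; translation number $\pm 1$ is attained by commutators, but only by lifted parabolics, never by central elements.) This end-point analysis is exactly Wood's refinement and cannot be recovered from the defect alone. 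For sufficiency, you defer the realisation step as ``the main obstacle,'' but that step \emph{is} the theorem: asserting that the achievable central values fill the integer interval with endpoints $\sum_i\llcorner b_i/a_i\lrcorner$ and $\sum_i\ulcorner b_i/a_i\urcorner$ is a restatement of EHN's Theorem 3.2, not a derivation of it, so as written the converse is circular. Moreover the construction must be carried out inside ${\rm PSL}_2(\R)$, not merely in ${\rm Homeo}^+({\S}^1)$; that this rigid transversely projective structure group realises the same numerical range is precisely the content of EHN's Corollary 4.3, and your handle-by-handle sketch does not address why ${\rm PSL}_2(\R)$-rigidity does not shrink the realisable set.
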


In order to prove Proposition \ref{Seifert volumes of Seifert
manifolds} we will check the following Proposition which describes
those representations leading to a non zero volume. For each element
$(a, b)\in {\R}\times\t{{\rm SL}_2(\R)}$, its image in
${\R}\times_{\Z}\t{{\rm SL}_2(\R)}$ will be denoted as $\o{(a, b)}$.

\begin{proposition}\label{SvSm}
A representation $\rho: \pi_1(N)\to {\rm Iso}_e\t{{\rm
SL}_2(\R)}={\R}\times_{\Z}\t{{\rm SL}_2(\R)}$ has non-zero volume iff
there are integers $n,n_1,...,n_p$ subject to the conditions
\begin{eqnarray}
\sum \llcorner{ {n_i}/{a_i}}\lrcorner -n\le 2g-2 \,\,\, {\rm and}  \,\,\,\sum \ulcorner{n_i}/{a_i}\urcorner-n \ge
2-2g
\end{eqnarray}
such that
\begin{eqnarray}
\rho(s_i)=\o{\left(\frac{n_i}{a_i}-\frac{b_i}{a_i}\frac{1}{e}\left(\sum_i\left(\frac{n_i}{a_i}\right)-n\right),g_i{\rm sh}\left(\frac{-n_i}{a_i}\right)g^{-1}_i\right)}
\end{eqnarray}
where $g_i$ is an element of $\t{{\rm SL}_2(\R)}$ and
\begin{eqnarray}
\rho(h)=\o{\left(\frac{1}{e}\left(\sum_i\left(\frac{n_i}{a_i}\right)-n\right),1\right)}
\end{eqnarray}
whose volume is given by
\begin{eqnarray}
{\rm
vol}(N,\rho)=4\pi^2\frac{1}{|e|}\left(\sum_i\left(\frac{n_i}{a_i}\right)-n\right)^2
\end{eqnarray}
Moreover the $\rho$-image of $\alpha_1,\beta_1, ... ,
\alpha_g,\beta_g$ can be chosen to lie in $\t{{\rm SL}_2(\R)}$.
\end{proposition}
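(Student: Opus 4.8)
The plan is to characterize representations $\rho\co\pi_1 N\to {\rm Iso}_e\t{{\rm SL}_2(\R)}={\R}\times_{\Z}\t{{\rm SL}_2(\R)}$ of non-zero volume by reducing the problem to the horizontal-foliation criterion of Eisenbud--Hirsch--Neumann (Theorem \ref{Eisenbud-Hirsch-Neumann}) together with the Godbillon--Vey computation of Proposition \ref{CV=Vol} and Proposition \ref{Euler}. First I would observe that because ${\rm Iso}_e\t{{\rm SL}_2(\R)}$ is generated by $\t{{\rm SL}_2(\R)}$ and the central ${\R}$, and the homogeneous space is the line bundle $\t{{\rm SL}_2(\R)}$ over ${\Hi}^2$, the volume form $\omega_X$ pulls back to the product of the area form on ${\Hi}^2$ with the fiber length. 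Hence a representation has non-zero volume only if its projection $\phi\co\pi_1 N\to {\rm PSL}_2(\R)$ is \emph{non-elementary} (its developing map covers a positive-area region of ${\Hi}^2$), and the volume is governed by the ${\R}$-translation part recorded along the central fiber. This is exactly the content that gets packaged as a Godbillon--Vey number.

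Next I would set up the bookkeeping for the central extension. Using the description in Remark \ref{SL}, I write $\rho(s_i)=\o{(x_i,g_i)}$ and $\rho(h)=\o{(y,w)}$ with $x_i,y\in{\R}$ and $g_i,w\in\t{{\rm SL}_2(\R)}$. Because $h$ is the regular fiber, it is central in $\pi_1 N$, so $\rho(h)$ must be central in the image; I would argue this forces $w$ to lie in the center of $\t{{\rm SL}_2(\R)}$, i.e. $w={\rm sh}(m)$ for some $m\in{\Z}$, which together with the ${\R}$-coordinate gives $\rho(h)=\o{(y,1)}$ after absorbing the integer shift via the equivalence relation $(x,h)\sim(x-n,{\rm sh}(n)\circ h)$. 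The relators $s_i^{a_i}h^{b_i}=1$ then impose, on the $\t{{\rm SL}_2(\R)}$-coordinate, that $g_i^{a_i}{\rm sh}(b_i)^{?}$ is trivial in the quotient, forcing $g_i$ to be conjugate to an elliptic rotation ${\rm sh}(-n_i/a_i)$ for an integer $n_i$ (this is where the $n_i/a_i$ appear), and on the ${\R}$-coordinate the scalar relation $a_i x_i + b_i y \equiv n_i \pmod{\Z}$. Solving the surface relation $[\alpha_1,\beta_1]\cdots=s_1\cdots s_p$ on the ${\R}$-coordinate yields the single linear constraint that determines $y=\frac1e\bigl(\sum_i(n_i/a_i)-n\bigr)$, giving formulas (4.3) and (4.4); here $n$ is the integer recording the total central twist contributed by the $\alpha_j,\beta_j$ and the product of shifts. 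The inequalities (4.2) are precisely the Milnor--Wood/EHN range for the existence of the horizontal foliation underlying $\phi$, so I would invoke Theorem \ref{Eisenbud-Hirsch-Neumann}(2) to match them to non-triviality.

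For the volume formula (4.5), I would compute directly: the Euler-number style integral of $\omega_X$ splits as an ${\Hi}^2$-area contribution (which by Gauss--Bonnet is $-2\pi\chi_{O(N)}$ times the degree of the Seifert fibration, independent of the central data) multiplied by the fiber-translation length, but the clean closed form $4\pi^2\frac1{|e|}(\sum_i(n_i/a_i)-n)^2$ is most efficiently obtained by identifying ${\rm vol}(N,\rho)$ with a Godbillon--Vey invariant via Proposition \ref{CV=Vol} and then evaluating it through the Euler-class pairing of Proposition \ref{Euler}; the squared term and the factor $4\pi^2$ are exactly what that proposition produces, with the $1/|e|$ coming from normalizing the fiber against the Seifert Euler number $e$. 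Finally the assertion that $\alpha_j,\beta_j$ may be taken in $\t{{\rm SL}_2(\R)}$ follows because their ${\R}$-coordinates can be shifted into the $s_i$ and $h$ data without changing the quotient class, so the only obstruction lives in the fiber and can be trivialized. The main obstacle I anticipate is the middle step: showing rigorously that the centrality of $h$ and the relators force $g_i$ into the specific elliptic conjugacy class ${\rm sh}(-n_i/a_i)$ with integer $n_i$, and that \emph{every} choice of $n,n_i$ in the EHN range is actually realized by a genuine representation (the ``if'' direction), which requires constructing the $g_i$ and the non-elementary $\phi$ explicitly and checking the commutator relation closes — this realizability argument, rather than the volume computation, is where the real work lies.
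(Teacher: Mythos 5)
Your algebraic bookkeeping — the relators $s_i^{a_i}h^{b_i}=1$ forcing the $\t{{\rm SL}_2(\R)}$-component of $\rho(s_i)$ to be conjugate to ${\rm sh}(-n_i/a_i)$ with $a_iz_i+b_i\zeta=n_i\in\Z$, the commutator relation killing the $\R$-coordinate and producing $n$, and the solution $\zeta=\frac1e\left(\sum_i n_i/a_i-n\right)$ — is the same as the paper's, and like the paper you lean on Theorem \ref{Eisenbud-Hirsch-Neumann} for the inequalities (4.1). But there are two genuine gaps. First, your deduction that centrality of $h$ in $\pi_1N$ forces the $\t{{\rm SL}_2(\R)}$-component $w$ of $\rho(h)$ into the center of $\t{{\rm SL}_2(\R)}$ is false as stated: $\rho(h)$ is only central in the \emph{image} $\rho(\pi_1N)$, and if that image is small (e.g.\ abelian) then $w$ can be arbitrary. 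The correct statement — the one the paper uses, citing Brooks--Goldman — needs the hypothesis ${\rm vol}(N,\rho)\neq0$: if the projection $\phi(h)\in{\rm PSL}_2(\R)$ were nontrivial, then $\phi(\pi_1N)$ would lie in the one-dimensional abelian centralizer of $\phi(h)$, so $\rho$ would take values in a two-dimensional Lie subgroup of $G$, whose continuous cohomology vanishes in degree $3$; hence the volume class pulls back to zero. Your paragraph-one ``non-elementarity'' remark gestures at this, but it is justified only by the heuristic that ``the developing map covers a positive-area region,'' and you never invoke it at the step where it is actually needed.

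Second, and more seriously, your derivation of the volume formula (4.5) does not go through. Proposition \ref{CV=Vol} applies to representations into $\t{{\rm SL}_2(\R)}$, i.e.\ to lifts of $\phi$ with vanishing \emph{integral} Euler class; but the representation at hand has $\rho(h)=\o{\left(\zeta,1\right)}$ with $\zeta=\frac1e\left(\sum_i n_i/a_i-n\right)$ generally not an integer, so $\rho$ does not take values in $\t{{\rm SL}_2(\R)}$ and no such lift of $\phi$ exists; Proposition \ref{CV=Vol} cannot be applied to $\rho$ directly. Moreover Proposition \ref{Euler} produces a term \emph{linear} in the Euler class, $4\pi^2\t{e}(E)$, not the square $\left(\sum_i n_i/a_i-n\right)^2$, so the claim that these two propositions ``exactly produce'' (4.5) is wrong. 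The paper bridges this gap with a covering argument that is the real content of the computation: a covering $p_1\co\t N\to N$ killing the exceptional fibers, a degree-$\t e$ covering $p_2\co\hat N\to\t N$ in the fiber direction after which the restricted representation genuinely lands in $\t{{\rm SL}_2(\R)}$, and a fiberwise quotient covering $\hat N\to N^*$ of degree $\t n=\t e\zeta$ onto a circle bundle whose fiber maps to ${\rm sh}(1)$; on $N^*$ one applies Theorem \ref{Eisenbud-Hirsch-Neumann} and Proposition \ref{Euler} to get ${\rm vol}(N^*,\rho^*)=4\pi^2\t n$, and then multiplicativity of the volume under these coverings (of degrees $\t e/e$, $\t e$ and $\t n$) is precisely what produces both the square and the factor $1/|e|$. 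Without this, or an equivalent device, formula (4.5) — and with it the ``iff'' — is unproved. Your worry about realizability is legitimate but is the lesser issue: the paper, too, delegates that direction to Theorem \ref{Eisenbud-Hirsch-Neumann} and its proof in \cite{EHN}.
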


\begin{proof}
The condition ${\rm vol}(N,\rho)\ne 0$ implies that $\rho(h)=
\o{(\zeta, 1)}\in G={\R}\times_{\Z}\t{{\rm SL}_2(\R)}$ by \cite[page
663]{BG1} and \cite[page 537]{BG2}, using a cohomological-dimension argument and the definition in paragraph 3.2. Suppose $\rho(s_i) = \o{(z_i,
x_i)}$. Then $s_i^{a_i}h^{b_i}=1$ implies that $$\o{({a_i}z_i,
x^{a_i})} \o{({b_i} \zeta,1)}=\o{({a_i}z_i+{b_i}\zeta, x^{a_i})}=1$$
Then there is an $n_i\in{\Z}$ such that (see Remark \ref{SL})
\begin{eqnarray}
{a_i}z_i+{b_i}\zeta=n_i \text{ in}\, \R \text{ and }
x_i \text{ is conjugate in $\t{{\rm SL}_2(\R)}$ to } {\rm sh}\left(-\frac{n_i}{a_i}\right)
\end{eqnarray}
Since $[\alpha_1,\beta_1]...[\alpha_g,\beta_g]=s_1....s_p$ and since the
product of commutators in ${\R}\times_{\Z}\t{{\rm SL}_2(\R)}$ must
lie  in $\t{{\rm SL}_2(\R)}$ this implies that
$$\o{(z_1+...+z_p, x_1...x_p)}=\o{\left(0,
\prod_{j=1}^g[\rho(\alpha_j),\rho(\beta_j)]\right)}$$
Then there is an $n\in{\Z}$ such that
\begin{eqnarray}
z_1+...+z_p=n \text{ and } \prod_{j=1}^g[\rho(\alpha_j),\rho(\beta_j)]=x_1...x_p{\rm sh}(n)
\end{eqnarray}
Equalities  (4.6) and (4.5),  imply
 condition (4.1) in Proposition \ref{SvSm} using  Theorem \ref{Eisenbud-Hirsch-Neumann} and
its proof in \cite{EHN}.
By (4.5) and (4.6), we can calculate directly
\begin{eqnarray}
z_i=\frac{n_i}{a_i}-\frac{b_i}{a_i}\zeta,\,\,\,  \zeta= \frac 1e \left(\sum_{i=1}^p\frac{n_i}{a_i}-n\right)
\end{eqnarray}
Plugging (4.5),  (4.6) and (4.7) into $\rho(h)= (\zeta, 1)$ and $\rho(s_i)
= (z_i, x_i)$, we obtain (4.2) and (4.3) in Proposition
\ref{SvSm}.
Then the  "moreover" part of Proposition \ref{SvSm} also follows from Theorem
\ref{Eisenbud-Hirsch-Neumann}.

Let's now compute the volume of such a representation. Let $p_1: \t
N \to N$ be a covering from  a circle bundle $\t N$ over $\t F$ to
$N$ so that the fiber degree is 1.  Then we have $$\t e=e(\t
N)=(\text{deg} p_1) e$$ Let $\t t$ be the fiber of $\t N$ and $\t
\rho =\rho|\pi_1\t N$. Then $\left(\t{t}\right) ^{\t e}=\prod_{j=1}^{\t
g}[\t{\alpha_j},\t{\beta_j}]$ in $\pi_1\t N$, and therefore $\t \rho
(\left(\t{t}\right) ^{\t e})= (\o{\t e\zeta, 1})\in Z(G)\cap \t{{\rm
SL}_2(\R)}$, since the image of the fibre must  lie in the center and
the image of the product of commutators must lie in $\t{{\rm
SL}_2(\R)}$. Hence $\t e \zeta= \t n \in{\Z}$.

Let $p_2: \hat N\to \t N$ be the covering along the fiber direction
of degree $\t e$, and then $\hat e = e(\hat N)=1$. Then $\hat \rho =
\t \rho|$ sends actually $\pi_1\hat N$  into  $\t{{\rm SL}_2(\R)}$
and the fiber $\hat t$ of $\hat N$ is sent to ${\rm sh}(\t{n})$. And
finally there is a covering $p_*: \hat N\to N^*$ along the fiber
direction of degree $\t n$, and where $N^*$ is a circle bundle over
a hyperbolic surface $F$ with $e^*=e(N^*)=\t n$. It is apparent that
$\hat \rho$ descends to $ \rho^*: \pi_1 N^*\to \t{{\rm SL}_2(\R)}$
such that $\rho^*(h^*)=\rm sh(1)$, where $h^*$ denotes the
${\S}^1$-fiber of $N^*$. According to Theorem
\ref{Eisenbud-Hirsch-Neumann}, there is a $({\rm PSL}_2(\R),{\S}^1)$-horizontal foliation on $N^*$, and according to Proposition
\ref{Euler}, ${\rm vol}(N^*,\rho^*)=4\pi^2e^*=4\pi^2\t n$, and then
$${\rm vol}(\hat{N},\hat \rho)=4\pi^2\t n^2=4\pi^2\t e^2\zeta^2.$$
Note that $$\text{deg} p_1 \text{deg} p_2= \frac{\t e}  e\times \t e=
\frac {\t e^2} e$$ By those facts  we reach (4.4) as
below:
$${\rm vol}(N,\rho)=\frac{{\rm vol}(\hat{N},\hat \rho)}{\text{deg} p_1 \text{deg} p_2}=\frac{4\pi^2\t e^2\zeta^2}{\frac {\t e^2} e}= 4\pi^2 {e}{\zeta^2}=4\frac{\pi^2}{|e|}\left(\sum_{i=1}^p\frac{n_i}{a_i}-n\right)^2$$
\end{proof}

\begin{remark} Suppose in Proposition \ref{SvSm} that $n_i=a_ik_i+r_i$, where $0\le r_i<a_i$. If we choose $n=2-2g+\sum_ik_i$ and $n_i=(k_i+1)a_i-1$ then
the corresponding representation $\rho_0$ is faithful, discrete and reaches the maximal volume
giving rise to the well known formula
$${\rm vol}(N,\rho_0)=4\pi^2\frac{1}{|e(N)|}\chi_{O(N)}^2$$
\end{remark}

\section{A brief review on the Chern Simons Theory}

Throughout this section we refer to
\cite{CS} and \cite{KN}. In this part, all the objects we deal with
are smooth. Let $\pi\co P\to M$ denote a principal $G$-bundle over a
closed manifold $M$. Suppose that $G$ is a Lie group acting on the right on $P$ and denote by $R_g$ the right action
$$P\ni x\mapsto x.g\in P$$ where $g$ in an element of $G$.
 Denote by $\mathfrak{g}$ the Lie algebra of $G$. Let $VP$ be the vertical subbundle of $TP$.

\subsection{Differential forms taking values in a Lie algebra}
  We denote by $\Omega^k(P; \mathfrak{g})$ the set of differential $k$-forms taking values in $\mathfrak{g}$.  We define the exterior product of $\omega_1\in\Omega^k(P; \mathfrak{g})$ by   $\omega_2\in\Omega^l(P; \mathfrak{g})$ as an element $\omega_1\wedge\omega_2$ of $\Omega^{k+l}(P; \mathfrak{g}\otimes\mathfrak{g})$ by setting
$$\omega_1\wedge\omega_2(X_1,...,X_{k+l})=$$ $$\frac{1}{(k+l)!}\sum_{\sigma\in\mathfrak{S}_{k+l}}{\rm sign}(\sigma)\omega_1(X_{\sigma(1)},...,X_{\sigma(k)})\otimes\omega_2(X_{\sigma(k+1)},...,X_{\sigma(l)})$$
The Lie bracket $[ ., . ]$ in $\mathfrak{g}$ induces a map $\Omega^{k}(P; \mathfrak{g}\otimes\mathfrak{g})\to\Omega^{k}(P; \mathfrak{g})$ and we denote by $[\omega_1,\omega_2]$ the image of $\omega_1\wedge\omega_2$ under this map. Explicitely we get:
$$[\omega_1,\omega_2](X_1,...,X_{k+l})=$$ $$\frac{1}{(k+l)!}\sum_{\sigma\in\mathfrak{S}_{k+l}}{\rm sign}(\sigma)[\omega_1(X_{\sigma(1)},...,X_{\sigma(k)}),\omega_2(X_{\sigma(k+1)},...,X_{\sigma(l)})]$$
The differrential $d\co\Omega^k(P; \mathfrak{g})\to\Omega^{k+1}(P; \mathfrak{g})$ is defined by the Cartan formula
$$d\omega(X_1,...,X_{k+1})=\frac{1}{k+1}\sum_{i=1}^{k+1}X_i.\omega(X_1,...,\hat{X_i},...,X_{k+1})+$$ $$\frac{1}{k+1}\sum_{i<j}(-1)^{i+j}\omega([X_i,X_j],X_1,...,\hat{X_i},...,\hat{X_j},...,X_{p+1})$$
\subsection{Connections on principal bundles}
The derivative at the identity $1$ of $G$ of the map $$G\ni g\mapsto x.g\in P$$ induces an isomorphism $\nu_x\co\mathfrak{g}\to V_xP\subset T_xP$ and we get the exact sequence
$$0\to\mathfrak{g}\stackrel{\nu_x}{\to} T_xP\stackrel{d\pi_x}{\to}T_{\pi(x)}M\to 0$$
A horizontal subbundle $HP$ of $TP$ is a smooth distribution such that $T_xP=V_xP\oplus H_xP$ for any $x\in P$ that is $G$ equivariant: $H_{x.g}=dR_g(x)H_x$. This is given equivalently by the kernel  of an element $\omega\in\Omega^1(P;\mathfrak{g})$ such that for any $x\in P$

(1) $\omega_x\circ\nu_x={\rm Id}_{\mathfrak{g}}$ and

(2) $R_g^{\ast}\omega={\rm Ad}_{g^{-1}}(\omega)$.

An element of $\Omega^1(P;\mathfrak{g})$ satisfying (1) and (2) is
termed  \emph{a  connection of} $P$. Denote by $\c{A}(P) $ the space
of all conections on $P$. This space is naturally acted by the
\emph{gauge group} denoted by  $\c{G}_P$  consisting of the
$G$-equivariant  bundle automorphisms of $P$.

The basic example is the group $G$ itself, viewed as a trivial
bundle over a point or more generally the trivialized bundle
$M\times G$ with the so-called Maurer-Cartan connection
$\omega_{{\rm M.C.}}=d(L_{g^{-1}}\circ\pi_2)$, where $L_g$ denotes
the left translation in $G$ and $\pi_2$  the projection of $P$ onto
$G$. This connection satisfies the Maurer Cartan equation, namely
$$d\omega_{{\rm M.C.}}=-\frac{1}{2}[\omega_{{\rm M.C.}},\omega_{{\rm
M.C.}}]$$

Let's make a concrete computation for $G$. Let
$X_1,...,X_n$ be a basis of $\mathfrak{g}$.   Since $\mathfrak{g}$ can be thought of as the space of left invariant vector fields in $G$, its dual $\mathfrak{g}^*$ is the space of left invariant differential 1-forms on $G$. Let
$\theta^1,...,\theta^n$ denote the dual basis of $\mathfrak{g}^*$. Then
$$\omega_{{\rm M.C.}}=\theta^1\otimes X_1+...+\theta^n\otimes X_n$$ Let us write
the constants structure of $\mathfrak{g}$ which are given by the
formula $$[X_j,X_k]=\sum_ic^i_{jk}X_i$$ Thus by  the Maurer-Cartan
equation we get the equalities
\begin{eqnarray}
d\theta^i=-\frac{1}{2}\sum_{j,k}c^i_{j,k}\theta^j\wedge\theta^k
\end{eqnarray}
In general,  for a given connection $\omega$ in a bundle $P$, the element
 \begin{eqnarray}
 F^{\omega}=d\omega+\frac{1}{2}[\omega,\omega]
 \end{eqnarray}
 is the \emph{curvature} of $\omega$ lying in $\Omega^2(P;\mathfrak{g})$ and it measures the integrability of the corresponding horizontal distribution.  When $F^{\omega}=0$ we say that the connection is flat. Denote by  $\c{FA}(P)$ the subset of $\c{A}(P)$ which consists of flat connections on $P$. This subspace is preserved by the gauge group action.

We recall the following basic fact that will be used very often in
this paper. To each flat connection $\omega$ one can associate a
representation $\rho\co\pi_1M\to G$ by lifting the loops of $M$ in
the leaves of the horizontal foliation given by integrating the
distribution $\ker\omega$.


On the other hand $\omega$ can be recovered from $\rho$ by the
following construction. The fundamental group of $M$ acts on the
product $\t{M}\times G$ by the formula
$[\sigma].([\gamma],g)=([\sigma.\gamma],\rho([\sigma]^{-1}).g)$ and
the quotient $\t{M}\times_{\rho} G$ under this $\pi_1M$-action is
isomorphic to $P$ and the push forward of the vertical distribution
of $\t{M}\times G$ in $\t{M}\times_{\rho} G$ corresponds to $\omega$
in $P$. We get a natural map
$$I_P\co\c{B}(P)=\c{FA}(P)/\c{G}_P\hookrightarrow\c{R}(\pi_1M,G)/{\rm conjugation}$$
where $\c{R}(\pi_1M,G)$ is the set of representations of $\pi_1M$
into $G$ acted by the conjugation in $G$. Notice that this map is usually non-surjective.

\subsection{The Chern Simons classes}
Given a  Lie group $G$, a polynomial of degree $l$ is a symmetric
linear map $f\co\otimes^l\mathfrak{g}\to {\mathbb K}$, where
${\mathbb K}$ denotes either the real or  the complex numbers field.
The group $G$ acts on $\mathfrak{g}$ by ${\rm Ad}$ and the
polynomials invariant under this action are called the
\emph{invariant polynomials of degree} $l$ and are denoted by
$I^l(G)$ with the convention $I^0(G)={\mathbb K}$. Denote $I(G)$ the
sum $\oplus_{l\in{\N}} I^l(G)$.

The \emph{Chern Weil theory} gives a correspondence $W_P$ from
$I^l(G)$ to $H^{2l}(M;{\mathbb K})$ constructed in the following
way. Choose a connection $\omega$   in $P$ then for any $l\geq 1$ a
polynomial $f\in I^l(G)$ gives rise to a $2l$-form
${f}(\wedge^lF^{\omega})$ in $P$. It follows from the Chern Weil
Theory that ${f}(\wedge^lF^{\omega})$ is closed and  is the
pull-back of a unique form on $M$ under $\pi\co P\to M$ denoted by
$\pi_{\ast}{f}(\wedge^lF^{\omega})$. Then $W_P(f)$ is by definition
the class of $\pi_{\ast}{f}(\wedge^lF^{\omega})$ in $H^{2l}(M)$. The
Chern Weil Theorem claims that $W_P(f)$ does not depend on the
chosen connection $\omega$ and that $W_P$ is actually a
homomorphism.

Let $EG$ denote the \emph{universal principal} $G$-bundle and denote by $BG$ the classifying space of $G$. This means that any principal $G$-bundle $P\to M$ admits a bundle homomorphism $\xi\co P\to EG$ descending to the classifying map, still denoted $\xi\co M\to BG$, that is unique up to homotopy. There exists the universal Chern Weil homomorphism $\t{W}\co I^l(G)\to H^{2l}(BG)$ such that $\xi^{\ast}\t{W}(f)=W_P(f)$.

The Chern Simons invariants were derived from this construction by Chern and Simons who
observed that ${f}(\wedge^lF^{\omega})$, for $l\geq 1$, is actually exact in $P$
and a primitive is given explicitly in \cite{CS} by
\begin{eqnarray}
Tf(\omega)=l\int_0^1f(\omega\wedge(\wedge^{l-1}F^t))dt
\end{eqnarray}
where $F^t=tF^{\omega}+\frac{1}{2}(t^2-t)[\omega,\omega]$. The form
$Tf(\omega)$ is closed when $M$ is of dimension $2l-1$. For instance
when $l=2$ and $M$ is a $3$-manifold, plugging $F^t$ and (5.2) into
(5.3) we get a closed $3$-form on $P$, namely
\begin{eqnarray}
Tf(\omega)=f(F^{\omega}\wedge\omega)-\frac{1}{6}f(\omega\wedge[\omega,\omega])=f(d{\omega}\wedge\omega)+\frac{1}{3}f(\omega\wedge[\omega,\omega])
\end{eqnarray}
Considering $G$ as a principal bundle over the point this yields to $$Tf(\omega_{{\rm M.C.}})=-\frac{1}{6}f(\omega_{{\rm M.C.}}\wedge[\omega_{{\rm M.C.}},\omega_{{\rm M.C.}}])$$ The $(2l-1)$-form $Tf(\omega_{{\rm M.C.}})$ is closed, bi-invariant and defines a class in $H^{2l-1}(G;{\R})$.
Let's denote by
$$I_0(G)=\{f\in I(G), Tf(\omega_{{\rm M.C.}})\in H^{2l-1}(G;{\Z})\}$$
 The elements of $I_0(G)$ are termed \emph{integral polynomials}. If $f\in I_0(G)$ then there is a well defined functional

\begin{eqnarray}
 \mathfrak{cs}^*_{M}\co\c{A}_{M\times G}\to{\mathbb K}/{\Z}
 \end{eqnarray}
 defined as follows: since $P=M\times G$  is a trivial(ized)  we can consider, for any section $\delta$, the Chern Simons invariant
 \begin{eqnarray}
 \mathfrak{cs}_M(\omega,\delta)=\int_M\delta^{\ast}Tf(\omega)
\end{eqnarray}
Since $f$ is an integral polynomial, the element $\mathfrak{cs}_M(\omega,\delta)$ is well defined modulo ${\Z}$ when the section changes. Then define $\mathfrak{cs}^{\ast}_M(\omega)$ to be   the class of $\mathfrak{cs}_M(\omega,\delta)$ in ${\mathbb K}/{\Z}$.

The fundamental classical examples  are $G={\rm SU}(2;{\C})$ and
$G={\rm SO}(3;{\R})$.

The Chern Simons classes for the group ${\rm SU}(2;{\C})$ are based on the second Chern class $f=C_2\in I_0^2({\rm SL}(2;{\C}))$.
We recall that the Chern classes,  denoted by $C_1$, $C_2$ for ${\rm SU}(2;{\C})$, are the complex valued invariant polynomials
such that $${\det}\left(\lambda. I_2-\frac{1}{2i\pi}A\right)=\lambda^2+C_1(A)\lambda+C_2(A\otimes A)$$  when $A\in\mathfrak{sl}_2(\C)$. Thus after developing this equality we get
$$C_2(A\otimes A)=\frac{1}{8\pi^2}{\rm tr}(A^2)$$ so that we get the usual
formula (using (5.4))
\begin{eqnarray}
TC_2(\omega)&=\frac{1}{8\pi^2}{\rm
Tr}\left(F^{\omega}\wedge\omega-\frac{1}{6}\omega\wedge[\omega,\omega]\right)
\end{eqnarray}
\begin{eqnarray*}
=\frac{1}{8\pi^2}{\rm
Tr}\left(d{\omega}\wedge\omega+\frac{1}{3}\omega\wedge[\omega,\omega]\right)
\end{eqnarray*}

The Chern-Simons classes of the special orthogonal group $G={\rm
SO}(3;{\R})$ are based on the first Pontrjagin class $f=P_1\in
I_0^2({\rm SO}(3;{\R}))$ that is a   the real valued invariant
polynomial such that $${\det}\left(\lambda.
I_3-\frac{1}{2\pi}A\right)=\lambda^3+P_1(A\otimes A)\lambda$$  when
$A\in\mathfrak{so}_3(\R)$.
Thus after developing this equality we get

$$  P_1(A\otimes A)=-\frac{1}{8\pi^2}{\rm tr}(A^2)$$

  \begin{example}
  When $M$ is an oriented Riemaniann closed $n$-manifold one can consider its associated ${\rm SO}(n;{\R})$-bundle ${\rm SO}(M)$  which consists of the  positive orthonormal unit frames endowed with the \emph{Levi Civita connection}.   When $M$ is of dimension $3$ it is well known that its is parallelizable so that there exist sections $\delta$ of  ${\rm SO}(M)\to M$. Therefore one can consider the Chern-Simons invariant of the Levi Civita connection on $M$ that will be denoted by $\mathfrak{cs}_{\rm L.C}(M,\delta)$.
  \end{example}

A natural question arises in the following situation. There is an
epimorphism $\pi_2\co{\rm SU}(2;{\C})\to{\rm SO}(3;{\R})$ that is
the $2$-fold universal covering. Thus any connection $\omega$ on the
trivialized ${\rm SU}(2;{\C})$-bundle over $M$ induces a connection
$\omega'$ on the corresponding   $ {\rm SO}(3;{\R})$-bundle over
$M$. How can we compute $TP_1(\omega')$ form $TC_2(\omega)$? The
answer is given in \cite[pp 543, end of Section 3]{KK} by recalling
that $\pi_2$ induces a homomorphism between the corresponding
classifying spaces  $$\pi_2^{\ast}\co H^4(B{\rm SO}(3;{\R}))\to
H^4(B{\rm SU}(2;{\C}))$$ such that
$$\pi_2^{\ast}\t{W}(P_1)=-4\t{W}(C_2)$$ Thus using the definition
and the Chern Weil universal  homomorphism we get the equality
\begin{eqnarray}
\mathfrak{cs}_{M}(\omega',\delta')=-4\mathfrak{cs}_{M}(\omega,\delta)
\end{eqnarray}

where $\delta$ is a fixed section in the ${\rm SU}(2;{\C})$-bundle
over $M$ and $\delta'$ is the corresponding section in the $ {\rm
SO}(3;{\R})$-bundle over $M$. On the other hand since $G={\rm
SO}(3;{\R})$, resp. ${\rm SU}(2;{\C})$, are the maximal compact
subgroup of ${\rm PSL}(2;{\C})$, resp. ${\rm SL}(2;{\C})$, whose
quotients ${\rm PSL}(2;{\C})/{\rm SO}(3;{\R})$, resp. ${\rm
SL}(2;{\C})/{\rm SU}(2;{\C})$ are contractible then it follows from
\cite{Ho}[Chapter 15, Theorem 3.1] and \cite{Du1}[Proposition 7.2,
p. 98] that  the natural inclusion gives rise to isomorphisms
$H^{\ast}(B {\rm PSL}(2;{\C}))\to H^{\ast}(B {\rm SO}(3;{\R}))$ and
$H^{\ast}(B {\rm SL}(2;{\C}))\to H^{\ast}(B {\rm SU}(2;{\C}))$. We
have the following commutative diagram
$$\xymatrix{
H^*(B{\rm PSL}(2;{\C})) \ar[r]^{\simeq} \ar[d] & H^*(B {\rm SO}(3;{\R})) \ar[d]\\
H^*(B{\rm SL}(2;{\C})) \ar[r]^{\simeq} & H^*(B {\rm SU}(2;{\C})) }$$
Hence we also get (fixing a trivialization, using (5.6), (5.7),
(5.8))
\begin{eqnarray}
 \mathfrak{cs}_{M}(\omega',\delta')&=-4\mathfrak{cs}_{M}(\omega,\delta)
 \end{eqnarray}
\begin{eqnarray*} =
-\frac{1}{2\pi^2}\int_M\delta^{\ast}{\rm
Tr}\left(F^{\omega}\wedge\omega-\frac{1}{6}\omega\wedge[\omega,\omega]\right)
\end{eqnarray*}
 where $\delta$ is a fixed section in the ${\rm
SL}(2;{\C})$-bundle over $M$ and $\delta'$ is the corresponding
section in the $ {\rm PSL}(2;{\C})$-bundle over $M$.

\subsection{Volume and Chern Simons classes in Seifert geometry}
In this section we check Proposition \ref{vol} keeping the same notation as in the introduction.
The  proof is inspired from \cite[p. 532]{BG2} and we  we will
follow faithfully their presentation. If $G={\rm Isom}_e(\t{{\rm
SL}_2({\R})})$ then the matrices $$X=\begin{pmatrix}
1&0 \\
0&-1
\end{pmatrix} , Y=\begin{pmatrix}
0&0 \\
1&0
\end{pmatrix} , Z=\begin{pmatrix}
0&1 \\
0&0
\end{pmatrix}$$ together with the generator $T$ of   ${\R}$ form a basis of the Lie algebra $\mathfrak{g}$ of $G={\rm Isom}_e(\t{{\rm SL}_2({\R})})$.
Setting $W=Z-Y-T$ we get a new basis $\{X,Y,Z,W\}$ of $\mathfrak{g}$
with commutators relations
\begin{eqnarray}
[X,Y]=-2Y, [X,Z]=2Z,
\end{eqnarray}
\begin{eqnarray*}
[Y,Z]=[Y,W]=[Z,W]=-X, [X,W]=2Y+2Z
\end{eqnarray*}
 which determine the
coefficients in the Maurer Cartan equations. Denote by
$\varphi_X,\varphi_Y,\varphi_Z,\varphi_W$ the dual basis of
$\mathfrak{g}^{\ast}$. The Maurer Cartan form of $G$ is given by
$$\omega_{\rm M.C.}=\varphi_X\otimes X+\varphi_Y\otimes
Y+\varphi_Z\otimes Z+\varphi_W\otimes W$$ Denote by $A$ a flat
connection on $M\times_{\rho}G$. By section 5.2,  if $\t{M}$ denotes
the universal covering and if  $q\co \t{M}\times G\to G$ denotes the
projection then  $A$ corresponds to the form
$\o{q^{\ast}(\omega_{\rm M.C.})}$, where $-\co\t{M}\times
G\to\t{M}\times_{\rho} G$ denotes the push-forward which makes sense
since $q^{\ast}(\omega_{\rm M.C.})$ is $\pi_1M$-invariant.   The
Chern Simons class of the flat connection  $A$ is $T{\bf
R}(A)=\o{q^{\ast}T{\bf R}(\omega_{\rm M.C.})}$. Using  equations
(5.1) and (5.10), we calculate
\begin{eqnarray}
d\varphi_X= & \varphi_Y\wedge\varphi_Z+\varphi_Y\wedge\varphi_W+\varphi_Z\wedge\varphi_W\\
d\varphi_Y= & 2\varphi_X\wedge\varphi_Y-2\varphi_X\wedge\varphi_W\\
d\varphi_Z= & -2\varphi_X\wedge\varphi_Z-2\varphi_X\wedge\varphi_W\\
d\varphi_W= & 0
\end{eqnarray}
Notice that those equations also imply that
$2(\varphi_X\wedge\varphi_Y+\varphi_X\wedge\varphi_Z)=d(\varphi_Y-\varphi_Z)$
and therefore
$$T{\bf R}(\omega_{\rm M.C.})=\frac{2}{3}\varphi_X\wedge\varphi_Y\wedge\varphi_Z+\frac{1}{3}  d(\varphi_Y\wedge\varphi_W-\varphi_Z\wedge\varphi_W)$$
The end of the proof follows from the  commutativity of the diagram
below and from the Stokes formula, since
$\varphi_X\wedge\varphi_Y\wedge\varphi_Z$ represents the volume form
on $X=\t{{\rm SL}_2({\R})}$.
$$\xymatrix{
G \ar[r] & X \\
 \t{M}\times  G\ar[u]^{q_G} \ar[d]_{-} \ar[r]^{\t{\pi}} & \t{M}\times X \ar[u]_{q_X}  \ar[d]^{-}  \\
  M\times_\rho G \ar[r]^{\pi}  & M\times_\rho X \\
M \ar[u]^{\delta} \ar[ur]_{s} & }$$ This completes the proof of
the proposition.

\subsection{Volume  and Chern Simons classes in Hyperbolic geometry}
We now check Proposition \ref{volh}.  The following construction is largely inspired from  \cite[p.
553-556]{KK}, using a formula established by Yoshida in \cite{Yo}.

Denote by $p\co{\rm PSL}(2;{\C})\simeq{\rm Iso}_+{\Hi}^3\to{\Hi}^3$ the the natural projection.
For short denote ${\rm PSL}(2;{\C})$ by $G$.
For each representation $\rho\co\pi_1M\to G$ admitting a lift into ${\rm SL}(2;{\C})$, we have the (trivial)
principal bundle $M\times _\rho G$ and the associated bundle
$M\times _\rho {\Hi}^3$. Denote by $A$ the flat connection over $M$
corresponding to $\rho$ and $\omega_{{\Hi}^3}$ the $G$-invariant volume form on
${\Hi}^3$ corresponding to  the hyperbolic metric.

The matrices $X=\begin{pmatrix}
1&0 \\
0&-1
\end{pmatrix}$, $Y=\begin{pmatrix}
0&0 \\
1&0
\end{pmatrix}$, $Z=\begin{pmatrix}
0&1 \\
0&0
\end{pmatrix}$ form a basis of the Lie algebra $\mathfrak{sl}(2;{\C})$ with commutators relations $$[X,Y]=-2Y, [X,Z]=2Z, [Y,Z]=-X$$ Denote by $\varphi_X,\varphi_Y,\varphi_Z$ the dual basis of $\mathfrak{sl}^{\ast}(2;{\C})$. The Maurer Cartan form of $G$ is  $$\omega_{\rm M.C.}=\varphi_X\otimes X+\varphi_Y\otimes Y+\varphi_Z\otimes Z$$
and
$$TP_1(\omega_{\rm M.C.})=\frac{1}{\pi^2}\varphi_X\wedge\varphi_Y\wedge\varphi_Z$$
By the formula of Yoshida in \cite{Yo} we know that
$$iTP_1(\omega_{\rm M.C.})=\frac{1}{\pi^2}p^*\omega_{{\Hi}^3}+i\mathfrak{cs}_{\rm L.C.}({\Hi}^3)+d\gamma$$
where $p^*\omega_{{\Hi}^3}$ is the pull-back of $\omega_{{\Hi}^3}$
under the projection $p: {\rm PSL}(2;{\C})\to{\Hi}^3$,
$\mathfrak{cs}_{\rm L.C.}({\Hi}^3)$ is the Chern Simons 3-form of
the Levi Civita connection over ${\Hi}^3$ (see example 5.1) endowed with the
hyperbolic metric   and $d\gamma$ is an exact real form. Let's consider
the following commutative diagram
$$\xymatrix{
G \ar[r]^{{p}} & {\Hi}^3 \\
 \t{M}\times  G\ar[u]^{q_G} \ar[d]_{-} \ar[r]^{{p}} & \t{M}\times {\Hi}^3 \ar[u]_{q_{{\Hi}^3}}  \ar[d]^{-}  \\
  M\times_\rho G \ar[r]^{p}  & M\times_\rho {\Hi}^3 \\
M \ar[u]^{\delta} \ar[ur]_{s} & }$$
Notice that the sections in the bottom triangle are obtained as follows. Since $M$ is a $3$-manifold then it follows from the obstruction theory that any principal bundle with simply connected group is trivial. Since $\rho\co\pi_1M\to G$ admits a lift into ${\rm SL}(2;{\C})$ then  $M\times_\rho G$ is trivial.  So denote by $\delta$ a section of  $M\times_\rho G\to M$. It induces, by $p\circ\delta=s$, a section of  $M\times_\rho {\Hi}^3\to M$.

Since all the maps are clear from the context, in the sequel, we will drop the index in the projections $q_G$ and $q_{{\Hi}^3}$ and we denote them just by $q$.
 Now the 3-form $\omega_{{\Hi}^3}$ induces a 3-form
$\o{q^*\omega_{{\Hi}^3}}$ on $M\times_\rho {\Hi}^3$ and

$$i\o{q^*TP_1(\omega_{\rm M.C.})}=\frac{1}{\pi^2}\o{q^*p^*\omega_{{\Hi}^3}}+i\o{q^*\mathfrak{cs}_{\rm L.C.}({\Hi}^3)}+\o{q^*d\gamma}$$
in $M\times_\rho G$, where the push-forward operation $\o{q^{\ast}(.)}$  indeed makes sense since $TP_1(\omega_{\rm M.C.}), p^*\omega_{{\Hi}^3}$ and $\mathfrak{cs}_{\rm L.C.}({\Hi}^3)$ are left invariant forms in $G$. Then $$i\mathfrak{cs}_M(A,
\delta)=\frac{1}{\pi^2}\int_M\delta^*\o{q^*p^*\omega_{{\Hi}^3}}+i\int_M\delta^*\o{q^*\mathfrak{cs}_{\rm L.C.}({\Hi}^3)}+\int_M\delta^*\o{q^*d\gamma}$$
Since
$\delta^*\o{q^*p^*\omega_{{\Hi}^3}}=\delta^*p^*\o{q^*\omega_{{\Hi}^3}}=s^*\o{q^*\omega_{{\Hi}^3}}$
and $\int_M\delta^*\o{q^*d\gamma}=0$ by the Stokes formula,  we have
$$i\mathfrak{cs}_M(A,
\delta)=\frac{1}{\pi^2}\int_Ms^*\o{q^*\omega_{{\Hi}^3}}+i\int_M\delta^*\o{q^*\mathfrak{cs}_{\rm L.C.}({\Hi}^3)}=\frac{1}{\pi^2}{\rm
vol}(M,\rho)+i\mathfrak{cs}(M_{\rho};\delta),$$
where we denote $\int_M\delta^*\o{q^*\mathfrak{cs}({\Hi}^3)}$ by
 $i\mathfrak{cs}(M_{\rho};\delta)$.
 We get eventually
 \begin{eqnarray}
\mathfrak{cs}_M(A,\delta)=\mathfrak{cs}(M_{\rho};\delta)-\frac{i}{\pi^2}{\rm vol}(M,\rho)
\end{eqnarray}

\subsection{Normal form near toral boundary of  3-Manifolds}
In this part we recall the machinery developed in \cite{KK}.
Let $M$ be a compact oriented $3$-manifold with connected and toral
boundary $T=\b M$ endowed with a  basis $s,h$ of $H_1(\b M;{\Z})$. Notice that we study the case of manifold with connected toral boundary only to simplify the notations, for all the results stated in this section extend naturally to compact $3$-manifolds with non-connected toral boundary.
Let $\rho\co\pi_1M\to G$ be a representation where $G$ is either
${\rm PSL}(2;{\C})$ or $\t{{\rm
SL}(2;{\R})}$. We consider the space of flat connections $\c{FA}({P})$ where $P$ is the trivialized bundle $M\times G$. For representations into  $\t{{\rm
SL}(2;{\R})}$ the corresponding principal bundles  are always trivial whereas the representations $\rho$ into  ${\rm PSL}(2;{\C})$ leading to a trivial bundle are precisely those who admit a lift $\o{\rho}$ into ${\rm SL}(2;{\C})$. Moreover if follows from \cite{KK} and \cite{Kh} that after a conjugation, the representation $\rho|\pi_1T$ can be put in \emph{normal form}, which either \emph{hyperbolic, elliptic or parabolic}. Since the parabolic form won't be used in the explicit way we only recall  the definitions of  those representations which are \emph{elliptic/hyperbolic} in the  ${\rm PSL}(2;{\C})$-case and \emph{elliptic} in the  $\t{{\rm
SL}(2;{\R})}$-case in the boundary of $M$.   By \cite{KK}, when $G={\rm PSL}(2;{\C})$, we
may assume, after conjugation, that there exist $\alpha,\beta\in{\C}$ such that
$$\rho(s)=\begin{pmatrix}
e^{2i\pi\alpha}&0 \\
0&e^{-2i\pi\alpha}
\end{pmatrix}\  {\rm and}\ \rho(h)=\begin{pmatrix}
e^{2i\pi\beta}&0 \\
0&e^{-2i\pi\beta}
\end{pmatrix}$$
When $G=\t{{\rm
SL}(2;{\R})}$ we may assume that  after  projecting into ${\rm PSL}(2;{\R})$ there exist $\alpha,\beta\in{\R}$ such that, up to conjugation,
$$s\mapsto\begin{pmatrix}
\cos(2\pi\alpha)&\sin(2\pi\alpha) \\
-\sin(2\pi\alpha)&\cos(2\pi\alpha)
\end{pmatrix}\ {\rm and}\ h\mapsto\begin{pmatrix}
\cos(2\pi\beta)&\sin(2\pi\beta) \\
-\sin(2\pi\beta)&\cos(2\pi\beta)
\end{pmatrix}$$ In either case, if  $A$ denotes a connection on $P$ corresponding to $\rho$ then after a gauge transformation $g$ the connection $g*A$ is in normal form: $$g*A|T\times[0,1]=(i\alpha dx+i\beta dy)\otimes X$$

We quote the  following result  stated in \cite[Lemma 3.3]{KK} with $G={\rm SL}(2;{\C})$ and in \cite[Theorem 4.2]{Kh} with $G=\t{{\rm SL}(2;{\R})}$, that will be used latter :
\begin{proposition}\label{KKh}
Let $A$ and $B$ denote two flat connections in normal form over an oriented $3$-manifold with toral boundary. If $A$ and $B$ are equal near the boundary and if they are gauge equivalent then

i)  $\mathfrak{cs}_M(A,\delta)=\mathfrak{cs}_M(B,\delta')$ when $G=\t{{\rm SL}(2;{\R})}$   and,

ii)  $\mathfrak{cs}_M(A,\delta)-\mathfrak{cs}_M(B,\delta')\in{\Z}$ when $G={\rm PSL}(2;{\C})$.
\end{proposition}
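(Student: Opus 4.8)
The plan is to reduce the statement to the classical gauge--variation law for the Chern--Simons transgression form, and then to control the resulting bulk and boundary contributions by exploiting the normal form along $\b M$. First I would trivialize $P=M\times G$ and use the canonical section to replace the connections $A,B$ by their base $\mathfrak{g}$-valued connection forms. Writing $\mathbf{a}=\delta^{\ast}A$ and $\mathbf{b}=\delta'^{\ast}B$, naturality of the transgression form gives $\mathfrak{cs}_M(A,\delta)=\int_M Tf(\mathbf{a})$ and $\mathfrak{cs}_M(B,\delta')=\int_M Tf(\mathbf{b})$. Since $A$ and $B$ are gauge equivalent, combining the gauge transformation with the change of section produces a smooth map $\bar g\co M\to G$ with $\mathbf{b}=\mathbf{a}^{\bar g}=\bar g^{-1}\mathbf{a}\bar g+\bar g^{-1}d\bar g$, so everything reduces to evaluating $\int_M\left(Tf(\mathbf{b})-Tf(\mathbf{a})\right)$.

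Next I would invoke the gauge--variation identity of \cite{CS}: for a degree-two invariant polynomial $f$ one has the pointwise equality
\[
Tf(\mathbf{a}^{\bar g})=Tf(\mathbf{a})+\bar g^{\ast}\theta+d\beta(\mathbf{a},\bar g),
\]
where $\theta=Tf(\omega_{\rm M.C.})$ is the bi-invariant Cartan $3$-form on $G$ and $\beta(\mathbf{a},\bar g)$ is the explicit transgression $2$-form built from $\mathbf{a}$ and $\bar g^{-1}d\bar g$. Integrating over $M$ and applying Stokes' formula gives
\[
\mathfrak{cs}_M(B,\delta')-\mathfrak{cs}_M(A,\delta)=\int_M\bar g^{\ast}\theta+\int_{\b M}\beta(\mathbf{a},\bar g).
\]

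The heart of the argument, and the step I expect to be the main obstacle, is the analysis along $\b M$. Because $A=B$ near $\b M$, the map $\bar g$ is a covariantly constant gauge transformation for the flat connection $\mathbf{a}$ there; since $\mathbf{a}$ is in normal form $(i\alpha\,dx+i\beta\,dy)\otimes X$, single-valuedness on the boundary torus should force the value of $\bar g$ into the centralizer $Z_G(X)$, which is abelian, and then $\mathbf{a}^{\bar g}=\mathbf{a}$ forces $\bar g^{-1}d\bar g=0$ on a collar, so $\bar g$ is locally constant there. Consequently $\beta(\mathbf{a},\bar g)$ vanishes along $\b M$, and $\bar g^{\ast}\theta$ vanishes on the whole collar: on the abelian subgroup $Z_G(X)$ the Maurer--Cartan form is closed with vanishing bracket, so by the defining formula for $Tf$ the restriction $\theta|_{Z_G(X)}$ is zero. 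The boundary integral therefore drops out, and $\bar g^{\ast}\theta$ is compactly supported in the interior. Making this confinement of $\bar g$ to the abelian centralizer genuinely rigorous is the delicate point of the whole proof.

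Finally I would interpret the remaining term as a relative pairing: since $\bar g^{\ast}\theta$ is supported away from $\b M$,
\[
\int_M\bar g^{\ast}\theta=\langle\bar g^{\ast}[\theta],[M,\b M]\rangle.
\]
Because $f\in I_0(G)$ the class $[\theta]$ is integral, so this pairing is an integer, which proves (ii) for $G={\rm PSL}(2;{\C})$. For $G={\rm Iso}_e\t{{\rm SL}_2(\R)}$ the group is homotopic to the circle (Remark \ref{SL}), whence $H^3(G;{\R})=0$ and $\theta$ is globally exact, say $\theta=d\eta$; then $\int_M\bar g^{\ast}\theta=\int_{\b M}\bar g^{\ast}\eta=0$ since $\bar g$ is locally constant along $\b M$, giving the exact equality (i).
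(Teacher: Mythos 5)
Your proof is a from-scratch argument for a statement the paper itself never proves: the paper simply quotes part (i) from \cite[Theorem 4.2]{Kh} and deduces part (ii) from the ${\rm SL}(2;{\C})$-statement of \cite[Lemma 3.3]{KK} together with identity (5.9). Your skeleton --- trivialize, write $\mathfrak{cs}_M(B,\delta')-\mathfrak{cs}_M(A,\delta)=\int_M\bar g^{\ast}\theta+\int_{\b M}\beta(\mathbf{a},\bar g)$ via the gauge-variation law, then use integrality of $[\theta]$ (resp.\ $H^3(G;{\R})=0$) --- is indeed the standard mechanism underlying those citations, so the overall route is sound. The problem is the step you yourself flag as delicate.

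The gap is this: single-valuedness of $\bar g$ on the boundary torus forces $\bar g(x_0)$ to commute with the \emph{holonomy group} $\rho(\pi_1T)$, not with the Lie-algebra element $X$. When $\rho|\pi_1T$ is non-central the two coincide (the centralizer is the diagonal torus $Z_G(X)$) and your chain ``centralizer-valued $\Rightarrow$ commutes with $\mathbf{a}$ $\Rightarrow$ $d\bar g=0$'' works. But when the boundary holonomy is central --- $\alpha,\beta\in\frac{1}{2}{\Z}$ for ${\rm PSL}(2;{\C})$, or boundary holonomy of the form ${\rm sh}(n)$ in the Seifert case --- the stabilizer of $\mathbf{a}|_{\rm collar}=d\phi\otimes X$, $\phi=i(\alpha x+\beta y)$, is much larger: every $\bar g=e^{-\phi X}g_0e^{\phi X}$ with \emph{arbitrary} $g_0\in G$ is covariantly constant and single-valued, and such $\bar g$ is neither $Z_G(X)$-valued nor locally constant. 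So your confinement claim is false in exactly the degenerate cases, and these are the cases the paper actually needs: the additivity principle of Section 5.7 requires the meridian class to lie in $\ker\rho$ (trivial holonomy), and in Lemma \ref{computs} the boundary representations take the central value ${\rm sh}(1)$. The proposition itself survives, but for a different reason than the one you give: from $d\bar g=\bar g\mathbf{a}-\mathbf{a}\bar g$ and the normal form one still gets that $\bar g|_{\rm collar}$ is independent of the collar parameter and that $d\bar g\,\bar g^{-1}$ is proportional to the single closed $1$-form $d\phi$; hence $\beta(\mathbf{a},\bar g)|_{\b M}$ is a multiple of $d\phi\wedge d\phi=0$, and $\bar g^{\ast}\theta$ vanishes on the collar because $\bar g$ there factors through a $2$-dimensional quotient. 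Moreover $\bar g|_{\b M}$ factors through a circle, so it admits a null-homotopy that also factors through a $2$-complex, after which your collapsing/integrality pairing (and, for (i), the Stokes argument with $\theta=d\eta$) goes through. None of this is in your write-up, so as it stands the proof breaks precisely on the inputs the paper feeds into this proposition. A minor slip besides: part (i) concerns $G=\t{{\rm SL}}(2;{\R})$, which is contractible; Remark \ref{SL} (circle homotopy type) describes ${\rm Iso}_e\t{{\rm SL}_2({\R})}$ --- harmless here, since $H^3(G;{\R})=0$ either way.
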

The second statement follows from  \cite[Lemma 3.3]{KK} using
identity (5.9).
\begin{remark}\label{auto} As a consequence of  Proposition \ref{KKh}, if   $A$ and $B$ are flat connections on a solid torus that are equal near the boundary then the associated representations are automatically conjugated so that the conclusion of the proposition applies.
\end{remark}
Since the complex function $z\mapsto e^{2i\pi z}$ identifies
${\C}/{\Z}$ with ${\C}^*$,    it will be convenient to use rather the
invariant (see also (5.5))
$$\mathfrak{cs}^*_M(A)=e^{2i\pi\mathfrak{cs}_M(A,\delta)}$$ when
$G={\rm PSL}(2;{\C})$ and $A$ is a connection in normal form, 
than $\mathfrak{cs}_M(A,\delta)$. Denote by $\c{ER}^0_M({\rm
PSL}(2;{\C}))$ the image, up to conjugation, of the
elliptic/hyperbolic representations of $\pi_1M$ into ${\rm
SL}(2;{\C})$ induced by the projection ${\rm SL}(2;{\C})\to{\rm
PSL}(2;{\C})$.

There is a natural map $t\co{\C}^2\to\c{ER}^0_{\b M}({\rm PSL}(2;{\C}))$ which sends $(\alpha,\beta)\in{\C}^2$ to the conjugation class of the  representation inducing the homomorphism ${\Z}\times{\Z}\to{\rm PSL}(2;{\C})$ defined by
$$s\mapsto\begin{pmatrix}
e^{2i\pi\alpha}&0 \\
0&e^{-2i\pi\alpha}
\end{pmatrix}\ \ {\rm and}\ \ h\mapsto\begin{pmatrix}
e^{2i\pi\beta}&0 \\
0&e^{-2i\pi\beta}
\end{pmatrix}$$ on the boundary.
Let $\rho$ be an element of $\c{ER}^0_M({\rm PSL}(2;{\C}))$. Then the map $\rho\mapsto(\alpha,\beta)$ is a lifting $L$ of the restriction map $r\co\c{ER}^0_M({\rm PSL}(2;{\C}))\to\c{ER}^0_{\b M}({\rm PSL}(2;{\C}))$ such that the following diagram commutes
$$\xymatrix{
 & {\C}^2 \ar[d]^t\\
\c{ER}^0_M({\rm PSL}(2;{\C}))  \ar[ur]^L \ar[r]^r & \c{ER}^0_{\b M}({\rm PSL}(2;{\C}))
}$$
To any $\rho$ in $\c{ER}^0_M({\rm PSL}(2;{\C}))$  we associate, after fixing a lift $L$ of $r$, the triple  $$(\alpha,\beta,\mathfrak{cs}^*_M(A))\in{\C}^2\times{\C}^{\ast}$$ where $A$ is the connection over $M$ corresponding to $L(\rho)$ in normal form $$A|T\times[0,1]=(i\alpha dx+i\beta dy)\otimes X$$ near the boundary. Using \cite[Theorem 2.5]{KK} with the group ${\rm PSL}(2;{\C})$ instead of ${\rm SU}(2;{\C})$ it turns out that if we replace  $A$ by $B$ with a lift $(\alpha+1/2,\beta,\mathfrak{cs}^*_M(B))$ or by $C$ with a lift $(\alpha,\beta +1/2,\mathfrak{cs}^*_M(C))$ then we get the following equalities:
\begin{eqnarray}
\mathfrak{cs}^*_M(B)=\mathfrak{cs}^*_M(A)e^{-4i\pi\beta} \ \ {\rm and} \ \ \mathfrak{cs}^*_M(C)=\mathfrak{cs}^*_M(A)e^{4i\pi\alpha}
\end{eqnarray}
whereas $t(\alpha,\beta)=t(\alpha+1/2,\beta)=t(\alpha,\beta +1/2)$.
We end this section by quoting the following result established in \cite[Theorem 2.7]{KK} for the group ${\rm SU}(2;{\C})$.
\begin{proposition}\label{path} Let $M$ be an oriented compact $3$-manifold with toral bondary $\b M=T$.
Let $\rho_t\co\pi_1M\to{\rm PSL}(2;{\C})$ be a path of elliptic/hyperbolic representations in $\c{ER}^0_M({\rm PSL}(2;{\C}))$. Choose a lifting $L(\rho_t)=(\alpha(t),\beta(t))$ of the restriction $r\co\c{ER}^0_M({\rm PSL}(2;{\C}))\to\c{ER}^0_{\b M}({\rm PSL}(2;{\C}))$ and denote by $A_t$ the corresponding path of flat connections.  Then
$$\mathfrak{cs}^*_M(A_1)=\mathfrak{cs}^*_M(A_0)\exp\left(-8i\pi\int_0^1(\alpha(t)\beta'(t)-\beta(t)\alpha'(t))dt\right)$$
\end{proposition}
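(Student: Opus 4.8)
The plan is to differentiate the Chern--Simons invariant along the path $\rho_t$ and to show, using flatness, that the entire variation is localized on $\b M=T$, where it can be read off directly from the normal form.

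Since every $\rho_t$ lies in $\c{ER}^0_M({\rm PSL}(2;{\C}))$, each associated bundle $M\times_{\rho_t}{\rm PSL}(2;{\C})$ is trivial, so I first fix \emph{one} global section $\delta$ valid for all $t$ and set $a_t=\delta^{\ast}A_t$. After the gauge fixing that puts $a_t$ in normal form near $T$ (as recalled above for ${\rm PSL}(2;{\C})$), we may assume $a_t=(i\alpha(t)\,dx+i\beta(t)\,dy)\otimes X$ near $T$, with $t\mapsto a_t$ a smooth family of \emph{flat} $\mathfrak{g}$-valued $1$-forms on $M$. By the normalization (5.9) the invariant is $\mathfrak{cs}_M(A_t,\delta)=-\frac{1}{2\pi^2}\int_M{\rm Tr}\!\left(da_t\wedge a_t+\frac13 a_t\wedge[a_t,a_t]\right)$. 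This real number is defined only modulo ${\Z}$, but its $t$-derivative (for the \emph{fixed} section $\delta$) is unambiguous, and since $\mathfrak{cs}^{\ast}_M(A)=e^{2i\pi\mathfrak{cs}_M(A,\delta)}$ does not depend on the section, it suffices to integrate this derivative.

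The heart of the argument is the first-variation formula for the Chern--Simons functional, which for a path $a_t$ of connections has the shape
$$\frac{d}{dt}{\rm Tr}\!\left(da_t\wedge a_t+\frac13 a_t\wedge[a_t,a_t]\right)=2\,{\rm Tr}\!\left(\dot a_t\wedge F^{a_t}\right)-d\,{\rm Tr}(a_t\wedge\dot a_t),$$
an identity obtained by a one-line manipulation using the cyclicity of ${\rm Tr}$. The decisive point is that each $a_t$ is flat, so $F^{a_t}\equiv 0$ and the bulk term vanishes identically; Stokes' theorem then collapses the variation of $\int_M$ to a boundary integral, $\frac{d}{dt}\mathfrak{cs}_M(A_t,\delta)=\frac{1}{2\pi^2}\int_{T}{\rm Tr}(a_t\wedge\dot a_t)$. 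Substituting the normal form and using that $X$ is diagonal with ${\rm Tr}(X^2)\neq 0$, the integrand becomes a constant multiple of $(\alpha\beta'-\beta\alpha')\,dx\wedge dy$, so $\frac{d}{dt}\mathfrak{cs}_M(A_t,\delta)$ is a constant multiple of $\alpha\beta'-\beta\alpha'$. Integrating over $[0,1]$ and exponentiating therefore gives
$$\mathfrak{cs}^{\ast}_M(A_1)=\mathfrak{cs}^{\ast}_M(A_0)\exp\!\left(\kappa\int_0^1(\alpha\beta'-\beta\alpha')\,dt\right)$$
for a universal constant $\kappa$. The value $\kappa=-8i\pi$ is fixed by the normalization $-\frac{1}{2\pi^2}$ of (5.9) together with $\int_T dx\wedge dy=4\pi^2$ (the $2\pi$ of the holonomy convention $\rho(s)={\rm diag}(e^{2i\pi\alpha},e^{-2i\pi\alpha})$); equivalently it is the constant of the ${\rm SU}(2;{\C})$ computation in \cite[Theorem 2.7]{KK} multiplied by the factor $-4$ coming from $\pi_2^{\ast}\t{W}(P_1)=-4\t{W}(C_2)$.

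The main obstacle is not any single computation but the bookkeeping that legitimizes the boundary reduction: one must choose the gauge so that the \emph{entire} family $a_t$ is simultaneously flat and in normal form near $T$, so that the exact term $-d\,{\rm Tr}(a_t\wedge\dot a_t)$ is the \emph{only} surviving contribution and so that $\alpha(t),\beta(t)$ are exactly the coordinates of the chosen lift $L(\rho_t)$. The second delicate point is pinning down $\kappa$: the area $4\pi^2=\int_T dx\wedge dy$ (equivalently the $2\pi$ of the holonomy normalization), the factor $-\frac{1}{2\pi^2}$ of (5.9), and the orientation of $T=\b M$ induced from $M$ must all be tracked coherently to produce the exact coefficient $-8i\pi$ and its sign.
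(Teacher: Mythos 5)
Your proposal is correct, and it takes a genuinely different route from the paper, because the paper contains no proof of this proposition at all: it is explicitly quoted from \cite[Theorem 2.7]{KK}, where it is established for ${\rm SU}(2;{\C})$, and is transferred to ${\rm PSL}(2;{\C})$ only implicitly, through the factor $-4$ relating the $P_1$-based and $C_2$-based Chern--Simons classes in (5.8)--(5.9). You instead give a self-contained variational argument, and the computation checks out: your identity
\begin{equation*}
\frac{d}{dt}\,{\rm Tr}\Bigl(da_t\wedge a_t+\tfrac{1}{3}a_t\wedge[a_t,a_t]\Bigr)=2\,{\rm Tr}\bigl(\dot a_t\wedge F^{a_t}\bigr)-d\,{\rm Tr}\bigl(a_t\wedge\dot a_t\bigr)
\end{equation*}
is the standard first-variation formula, flatness kills the bulk term, Stokes localizes the variation on $T=\b M$, and plugging in the normal form $(i\alpha\,dx+i\beta\,dy)\otimes X$ with ${\rm Tr}(X^2)=2$ and $\int_T dx\wedge dy=4\pi^2$ gives $\frac{d}{dt}\mathfrak{cs}_M(A_t,\delta)=-4\bigl(\alpha(t)\beta'(t)-\beta(t)\alpha'(t)\bigr)$ with the induced boundary orientation; exponentiating via $\mathfrak{cs}^*_M=e^{2i\pi\mathfrak{cs}_M(\cdot,\delta)}$ then produces exactly the factor $\exp\bigl(-8i\pi\int_0^1(\alpha\beta'-\beta\alpha')\,dt\bigr)$, so the constant actually falls out of your own computation and you need not ``fix it by matching'' against \cite{KK} --- that matching is only a consistency check. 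What your route buys is precisely what the paper leaves implicit: a verification that the Kirk--Klassen statement survives the passage from the compact group ${\rm SU}(2;{\C})$ to the non-compact ${\rm PSL}(2;{\C})$ with the polynomial $P_1$, where the invariant is complex-valued (a small slip in your write-up: for a fixed section $\mathfrak{cs}_M(A_t,\delta)$ is a genuine well-defined complex number, not a real one defined mod ${\Z}$; the mod-${\Z}$ ambiguity enters only when the section changes, so your logic is unaffected). What the paper's citation buys is the one genuinely delicate technical point, which your write-up correctly names but still assumes: the existence of a $t$-smooth family of gauge transformations putting the whole path of flat connections in normal form near $T$, with boundary coordinates equal to the chosen lift $L(\rho_t)=(\alpha(t),\beta(t))$. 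That lemma is established in \cite{KK}; a fully self-contained version of your argument would have to include it.
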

As a corollary we quote the formula stated in \cite[end of p. 555]{KK}:
\begin{corollary}\label{adding} Let $V={\bf D}^2\times{\S}^1$ denote the solid torus and let $\rho\co\pi_1V\to{\rm PSL}(2;{\C})$ denotes a representation. If $L(\rho)=(1/2,\beta)$ with respect to the meridian-longitude basis on $\b V=\b{\bf D}^2\times{\S}^1$ then
$$\mathfrak{cs}_M^*(B)=\exp(-4i\pi\beta)$$
where $B$ is the flat connection over $V$ corresponding to the lifting $L(\rho)$.
\end{corollary}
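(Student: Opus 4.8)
The plan is to realize $B$ as the endpoint of a path of flat elliptic/hyperbolic connections on $V$ and then apply Proposition \ref{path}. First I would record that the lift $L(\rho)=(1/2,\beta)$ sends the meridian $m=\b{\bf D}^2\times\{\mathrm{pt}\}$ to $\mathrm{diag}(e^{i\pi},e^{-i\pi})=-\mathrm{Id}$, which is trivial in ${\rm PSL}(2;{\C})$; hence the boundary representation kills $m$ and factors through $\pi_1V={\Z}$ (generated by the longitude), so $\rho$ genuinely extends over the solid torus. Now consider the path of boundary lifts $L(\rho_t)=(\alpha(t),\beta(t))=(1/2,t\beta)$ for $t\in[0,1]$. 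Each $\rho_t$ again sends the meridian to $-\mathrm{Id}$, hence lies in $\c{ER}^0_V({\rm PSL}(2;{\C}))$, and the path joins the connection $A_0$ with lift $(1/2,0)$ to $A_1=B$ with lift $(1/2,\beta)$.

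Feeding this path into Proposition \ref{path} is then a direct substitution. Since $\alpha(t)\equiv 1/2$ is constant and $\beta(t)=t\beta$, we have $\alpha'(t)=0$ and $\beta'(t)=\beta$, so
$$\int_0^1\bigl(\alpha(t)\beta'(t)-\beta(t)\alpha'(t)\bigr)\,dt=\int_0^1\tfrac{1}{2}\beta\,dt=\tfrac{\beta}{2},$$
and therefore
$$\mathfrak{cs}^*_V(B)=\mathfrak{cs}^*_V(A_0)\exp\!\left(-8i\pi\cdot\tfrac{\beta}{2}\right)=\mathfrak{cs}^*_V(A_0)\,e^{-4i\pi\beta}.$$
It only remains to pin down the base value $\mathfrak{cs}^*_V(A_0)=1$.

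For this last point I would start from the product flat connection underlying the trivial representation, in its natural trivialization with section $\delta$: there the connection form pulls back to $0$, so $\delta^*TP_1$ vanishes identically and $\mathfrak{cs}^*=e^{2i\pi\cdot 0}=1$. This product connection carries the lift $(0,0)$; passing from $(0,0)$ to the lift $(1/2,0)$ of $A_0$ is exactly the move $\alpha\mapsto\alpha+1/2$ governed by the first identity in (5.13), which multiplies $\mathfrak{cs}^*$ by $e^{-4i\pi\beta}=e^{0}=1$ (here $\beta=0$). Hence $\mathfrak{cs}^*_V(A_0)=1$ and $\mathfrak{cs}^*_V(B)=e^{-4i\pi\beta}$, as claimed. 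The one genuinely delicate step is this normalization: one must check that the trivialization used to compute $\mathfrak{cs}^*_V(A_0)$ is compatible with the normal form $A|_{T\times[0,1]}=(i\alpha\,dx+i\beta\,dy)\otimes X$ on which both (5.13) and Proposition \ref{path} are built, so that the lift-change rule may legitimately be invoked; the rest is the routine integral above.
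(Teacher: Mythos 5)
Your proof is correct and uses exactly the paper's machinery---Proposition \ref{path}, the lift-change identity, and normalization at the trivial connection---with the two moves merely performed in the opposite order: the paper runs the path at $\alpha=0$ (so the integral in Proposition \ref{path} vanishes) and then shifts $\alpha$ by $1/2$ to collect the factor $e^{-4i\pi\beta}$ from the lift-change formula, whereas you shift $\alpha$ first (a trivial factor, since $\beta=0$ there) and collect $e^{-4i\pi\beta}$ from the path integral at constant $\alpha=1/2$. One correction: the lift-change identity you invoke is equation (5.16) of the paper, not (5.13); the latter is one of the structure equations for ${\rm Iso}_e\t{{\rm SL}_2(\R)}$ in Section 5.4.
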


\begin{proof}
We begin by computing $\mathfrak{cs}^*_M(A)$ where $A$ is a
connection in normal form  corresponding to the lift
$(0,\beta)\in{\C}^2$. To this purpose we consider the path of flat
connections $\omega_t$ given by $(0,t\beta)$. Since $\omega_0$ is
the trivial connections then $\mathfrak{cs}_M^*(\omega_0)=1$ and
Proposition \ref{path} implies $\mathfrak{cs}_M^*(\omega_1)=1$.
Applying formula (5.16) once gives rise to $(1/2,\beta)$ with the
corresponding connections whose ${cs}_M^*$ is equal to $e^{-4i\pi
\beta}$. This proves the formula.
\end{proof}

 \subsection{Additivity principle}

 Fix a closed oriented  3-manifold $M$ and denote by $[M]$ its
 orientation class. Let $T$ be a separating torus cutting  $M$ into
 $ M_1$ and $M_2$. Denote by $[M_1, \partial M_1]$ and $[M_2, \partial M_2]$
 the induced orientations classes so that
  the induced orientations on $\partial M_1$ and $\partial M_2$
 are opposite on $T$, and we have $[M]=[(M_1,\b
M_1)]+[(M_2,\b M_2)]$.

Fix a regular neighbourhood $W(T)=[0,1]\times T$ such that
$T=\{1/2\}\times T$, $M_1\cap W(T)=[0,1/2]\times T$ and  $M_2\cap
W(T)=[1/2,1]\times T$. Let $A$ denote a flat connection over $M$.
Applying the same arguments as in \cite{KK} we may assume that  $A|W(T)$
is in normal form. Then by linearity of the integration
$$\mathfrak{cs}^*_{M}(A)=\mathfrak{cs}^*_{M_1}(A|M_1)\mathfrak{cs}^*_{M_2}(A|M_2)$$
Denote by $V$ the solid torus ${\bf D}^2\times{\S}^1$ with meridian
$m$.
Denote by $c$ a slope in $T$ and for each $i=1,2$ we perform a Dehn
filling to $M_i$ identifying $c$ with $m$ and denote by
$\hat{M}_i=M_i\cup V$ the resulting closed oriented manifold.
Suppose both $A|M_1$ and $A|M_2$
smoothly extend to  flat connections over $\hat{M}_1$ and
$\hat{M}_2$, respectively denoted by $\hat{A}_1$ and $\hat{A}_2$. This is to say that for any representation $\rho$ corresponding to $A$ then $[c]\in\ker\rho$.  By the linearity we have

$$\mathfrak{cs}^*_{\hat{M}_1}(\hat{A}_1)\mathfrak{cs}^*_{\hat{M}_2}(\hat{A}_2)=\mathfrak{cs}^*_{M_1}(A|M_1)\mathfrak{cs}^*_{V}(\hat{A}_1|V)\mathfrak{cs}^*_{M_2}(A|M_2)\mathfrak{cs}^*_{V}(\hat{A}_2|V).$$

Since the extensions from $M_i$, $i=1,2$,  to $V$, based on the
normal form on $[0,1]\times T$, are the same on the $T$ direction but
opposite on the $[0,1]$ direction, then using Proposition \ref{KKh} and  Remark \ref{auto}
$$\mathfrak{cs}^*_{V}(\hat{A}_1|V)\mathfrak{cs}^*_{V}(\hat{A}_2|V)=1$$
Then applying equalities (1.2) of Proposition \ref{vol} and  (1.3) in Proposition \ref{volh}  to the former
equality we get
\begin{eqnarray}
{\rm
vol}_G(M,\rho)={\rm vol}_G({M}_1(c),\hat{\rho}_1)+ {\rm
vol}_G({M}_2(c),\hat{\rho}_2)
\end{eqnarray}
 for $G=\t{{\rm SL}_2(\R)}$ or ${\rm PSL}(2;{\C})$,  and where $\hat{\rho}_i$ is
the extension of $\rho|\pi_1M_i$ to $\pi_1\hat{M_i}$, $i=1,2$.

\section{Manifolds with virtually positive hyperbolic volumes}

This section is devoted  to the  proof of Proposition \ref{hyper-virt-nonzero}.
To this purpose we
still need to make some technical statements. Consider a  compact
oriented $3$-manifold $Q$ with connected, toral boundary.
Denote by $i\co \b Q\to Q$ the natural inclusion. Applying the homology
exact sequence to the pair $(Q,\b Q)$ with real coefficients, we get
immediately
$${\rm Rank}\left(H_1\left(\b Q;{\R}\right)\stackrel{i_{\sharp}}{\to} H_1\left(Q;{\R}\right)\right)=1$$
Therefore   we may choose a \emph{meridian-longitude} basis
$(\mu,\lambda)$ of $H_1(\b Q;{\Z})$ such that $i_{\sharp}(\lambda)$ has
infinite order  whereas $i_{\sharp}(\mu)$ is a torsion element in
$H_1(Q;{\Z})$.
When $Q$ has non-connected toral boundary we write $\b Q=T_1\cup...\cup T_r$  and we fix a basis $(\lambda_i,\mu_i)$ on each component of $T_i$ as in Theorem \ref{surgered}.

In the sequel we will use the following covering result which was
first  proved in \cite{He} for the hyperbolic case and then extended 
 in \cite{Lu}.
\begin{lemma}\label{cover}
Let $Q$ be a compact, oriented and irreducible $3$-manifold with
toral boundary. Then   there exists a prime number $q_0$, depending
only on $Q$, such that for any  prime number $q\geq q_0$ there
exists a finite covering $p\co\t{Q}\to Q$ inducing the $q\times
q$-characteristic covering over  $\b Q$ such that each Seifert piece
of $\t{Q}$ is a product of a  surface with positive genus by the
circle.
\end{lemma}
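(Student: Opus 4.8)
The plan is to produce $\t Q\to Q$ as a \emph{regular} (normal) finite cover, realized by a single epimorphism $\phi\co\pi_1Q\to G$ onto a finite group $G$ whose restriction to each peripheral torus subgroup $\pi_1T\cong\Z\oplus\Z$ has image $\Z/q\Z\oplus\Z/q\Z$, i.e. kernel exactly $q\Z\oplus q\Z$. Working with a normal cover is what makes the boundary behaviour uniform: since $q\Z\oplus q\Z=q\cdot\pi_1T$ is a \emph{characteristic} subgroup of $\pi_1T$, for a normal $\ker\phi$ one has $\ker\phi\cap g(\pi_1T)g^{-1}=g(\ker\phi\cap\pi_1T)g^{-1}$, so every component of the preimage of $T$ is automatically the $q\times q$-characteristic cover, independently of the chosen basis. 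Thus the whole problem reduces to finding one $\phi$ with the prescribed peripheral images together with the product condition on the Seifert pieces of the geometric (JSJ) decomposition described in Section 2.1; the basis-free nature of $q\cdot\pi_1T$ is also what will later make the local covers match across the internal JSJ tori when such a cover is used to glue geometric pieces.

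For the Seifert part I would first record that a compact orientable Seifert piece $S$ with non-empty toral boundary and base $2$-orbifold $O$ of negative Euler characteristic is virtually a product: the orbifold group $\pi_1^{\mathrm{orb}}(O)$ is Fuchsian, hence by Selberg's lemma contains a finite-index torsion-free subgroup, giving a finite cover of $O$ by a genuine surface $F$; pulling the Seifert fibration back along this surface cover yields a fibre-preserving finite cover $S'\to S$ with no exceptional fibres. Since $F$ has non-empty boundary, $H^2(F;\Z)=0$, so the orientable circle bundle $S'\to F$ is trivial, $S'\cong F\times{\S}^1$; replacing $F$ by a further cover makes its genus positive. The role of the bound $q_0$ becomes visible here: choosing $q_0$ to exceed the orders $a_1,\dots,a_r$ of all exceptional fibres occurring in the finitely many Seifert pieces of $Q$ guarantees that for a prime $q\ge q_0$ the $q\times q$ peripheral datum is coprime to every exceptional order, so that unwrapping the fibre and section directions by $q$ is compatible with killing the exceptional fibres and trivialising the bundle.

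The peripheral control and the assembly rest on separability. Each geometric piece has fundamental group in which the peripheral $\Z\oplus\Z$ subgroups are separable: for hyperbolic pieces this is the cusped case of \cite{He}, and for Seifert pieces it follows from Scott's LERF theorem. Hence for each boundary and internal torus $T$ one can find a finite-index subgroup meeting $\pi_1T$ in exactly $q\Z\oplus q\Z$, since separability of the finite-index subgroup $q\cdot\pi_1T$ lets us pass to a finite-index subgroup that contains it yet avoids the finitely many non-trivial cosets of $q\cdot\pi_1T$ in $\pi_1T$. Intersecting the finitely many such subgroups over all tori and passing to the normal core yields a normal finite-index $\Gamma\trianglelefteq\pi_1Q$, hence the desired $\phi\co\pi_1Q\to G=\pi_1Q/\Gamma$; one then checks that the induced cover restricts over every Seifert piece to a cover of the product type above (positive genus arranged by one final further cover) and over every torus to the $q\times q$-characteristic cover.

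The hard part is the \emph{simultaneity}: producing one cover that at once realises the \emph{exact} index $q^2$, not merely some multiple of $q^2$, on every peripheral and JSJ torus and all their conjugates, and that trivialises \emph{every} Seifert piece with positive-genus base. Keeping the index exactly $q^2$ under passage to the normal core is the crux, because intersections of conjugates tend to shrink $\Gamma\cap\pi_1T$ below $q\Z\oplus q\Z$; it is precisely here that restricting to large primes $q$ and to the characteristic subgroup $q\cdot\pi_1T$ is used to pin the peripheral image at $\Z/q\Z\oplus\Z/q\Z$ (where the field $\Z/q\Z$ linearises the control of images in $\Z/q\Z$-homology). This is the step that extends the purely hyperbolic construction of \cite{He} to the graph and mixed pieces handled in \cite{Lu}.
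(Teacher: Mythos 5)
Your proposal founders exactly at the step you yourself label ``the crux,'' and nothing in it fills that gap. Separability of $q\cdot\pi_1T$ in $\pi_1Q$ does give, for each torus $T$ taken alone, a finite-index subgroup $H_T\le\pi_1Q$ with $H_T\cap\pi_1T=q\Z\oplus q\Z$. But passing to $\Gamma=\bigcap_T \mathrm{Core}(H_T)$ yields only $\Gamma\cap\pi_1T\subseteq q\Z\oplus q\Z$, and in general this inclusion is strict (the intersection need not even be of the form $m\Z\oplus m\Z$); once exactness is lost it cannot be restored, since a finite-index normal subgroup cannot be enlarged back inside the $H_T$'s. Neither of the two devices you invoke can ``pin'' the peripheral image: the fact that $q\cdot\pi_1T$ is characteristic in $\pi_1T$ guarantees only that all components over $T$ give the same covering \emph{once} $\Gamma\cap\pi_1T$ is known to equal $q\cdot\pi_1T$ --- it cannot force that equality; and $\Z/q$-homology cannot force it either, because by ``half lives, half dies'' the image of $\pi_1T$ in $H_1(Q;\Z/q)$ may be cyclic or trivial (for a knot exterior the longitude already dies in integral homology), so abelian quotients never realize peripheral image $(\Z/q)^2$ in such cases. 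As written, your argument therefore reduces the lemma to itself. There is also a misattribution that matters: \cite{He} is not a reference for separability of peripheral subgroups (that is Long--Niblo, or Hamilton's abelian subgroup separability); what \cite{He} proves is precisely the exact-index statement you are missing.

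For comparison: the paper gives no proof of this lemma at all --- it quotes it as proved in \cite{He} for the hyperbolic case and extended in \cite{Lu} --- and the mechanism in those proofs is arithmetic rather than separability. One conjugates the holonomy of a cusped hyperbolic piece into ${\rm PSL}(2;\mathcal{O}_K)$ for a number ring $\mathcal{O}_K$ (after inverting finitely many denominators) and reduces modulo a prime ideal $\mathfrak{p}$ lying over a large rational prime $q$. Peripheral elements are parabolic, hence unipotent, so every nontrivial one maps to an element of order exactly $q$ in ${\rm PSL}(2;\mathcal{O}_K/\mathfrak{p})$, and for all but finitely many $q$ the reductions of the two cusp generators remain $\FF_q$-independent; hence the kernel of this \emph{single global} homomorphism meets every peripheral subgroup in exactly $q\cdot\pi_1T$. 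Simultaneity is automatic because there is one representation, not a family of covers being intersected. The Seifert pieces (after the virtual $F\times{\S}^1$ structure you correctly describe) admit explicit finite quotients with the same peripheral behavior, and \cite{Lu} (see also \cite{DW1}) assembles these along the JSJ tori. So your observations on normality, characteristicness of $q\Z\oplus q\Z$, and virtual triviality of Seifert pieces are correct, but they are the routine part; the heart of the lemma requires the congruence-quotient argument your sketch omits.
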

We next state
\begin{lemma}\label{hvp}
 Let $Q$ be a compact oriented $3$-manifold with toral boundary  whose interior admits a complete (finite volume) hyperbolic metric. Then there exists a prime number $p_0$, depending only on $Q$, such that for any family of slopes $(m_1,...,m_r)$ in $\b Q$ with $m_i\subset T_i$ for $i=1,...,r$ and for any   prime number $q\geq p_0$ there exists a  finite covering   $p\co\t{Q}\to Q$ inducing the $q\times q$-characteristic covering over  $\b Q$ and a representation $\rho\co\pi_1\t{Q}(p^{-1}(m_1\cup...\cup m_r))\to{\rm PSL}(2;{\C})$ of positive volume, where $\t{Q}(p^{-1}(m_1\cup...\cup m_r))$ denotes the closed surgered manifold obtained from $\t{Q}$ after performing a Dehn filling on each component $U_i^j$ of $p^{-1}(T_i)$ identifying the meridian of a solid torus with a component of $p^{-1}(m_i)\cap U^j_i$ for any $i=1,...,r$.
\end{lemma}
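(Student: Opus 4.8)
The plan is to deduce the statement from Thurston's hyperbolic Dehn surgery theorem applied to a suitable characteristic cover; the mechanism is that a $q\times q$-characteristic cover multiplies the lengths of boundary slopes by $q$, so for $q$ large the filling slopes become long enough to guarantee hyperbolicity, uniformly in the chosen $m_i$.

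First I would produce the cover. Applying Lemma \ref{cover} to $Q$ yields a prime $q_0$ so that for every prime $q\ge q_0$ there is a finite covering $p\co\tilde Q\to Q$ inducing the $q\times q$-characteristic covering over $\partial Q$; since $Q$ is hyperbolic so is $\tilde Q$ (the Seifert-piece clause of Lemma \ref{cover} is vacuous here), with $\partial\tilde Q=\bigcup_{i,j}U_i^j=p^{-1}(\partial Q)$. I will fill each $U_i^j$ along a component $\tilde m_i^j$ of $p^{-1}(m_i)\cap U_i^j$.

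Then I would estimate the slope lengths. Fix a maximal system of disjoint embedded horoball cusp neighbourhoods of $Q$; on the horotorus over $T_i$ the primitive slope $m_i$ has length $\ge k$ by the universal norm equivalence recalled in Remark \ref{universal constant} (here $k$ is universal). Because $p$ is a local isometry and the embeddedness condition for a horoball only weakens when passing to the subgroup $\pi_1\tilde Q\subset\pi_1Q$, the preimage of this system is again embedded in $\tilde Q$. Over $T_i$ the covering is $q\times q$-characteristic with $q$ prime, so each $\tilde m_i^j$ is a $q$-fold cover of the primitive curve $m_i$; measured on the lifted (still embedded) horotorus its length is therefore $q\cdot{\rm length}(m_i)\ge qk$.

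Finally I would fill. Taking $p_0$ to be a prime with $p_0\ge q_0$ and $p_0>2\pi/k=C$ --- a quantity depending only on $Q$ --- every filling slope has length $>2\pi$ on a disjoint embedded horotorus, so the $2\pi$-lemma (equivalently Theorem \ref{surgered} together with Remark \ref{universal constant}) makes $\tilde Q(p^{-1}(m_1\cup\dots\cup m_r))$ a closed hyperbolic manifold; the fillings are honest Dehn fillings since the $\tilde m_i^j$ are primitive, so the result is a manifold and not a proper orbifold. By Theorem \ref{basic property of volume of presentation}(2) its positive hyperbolic volume is realized by a discrete faithful representation into ${\rm PSL}(2;{\C})$, which is the desired positive-volume $\rho$. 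The hard part is exactly the length-scaling step: one must verify both the combinatorial fact that a primitive slope lifts to a $q$-fold cover of itself in a $q\times q$-characteristic cover and the geometric fact that the maximal embedded horoball stays embedded upstairs, so that the factor-$q$ gain in length is genuine and uniform over all admissible slopes $m_i$.
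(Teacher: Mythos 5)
Your proof is correct, but it takes a genuinely different route from the paper's. You fill \emph{upstairs}: lift the maximal cusp neighbourhoods to the $q\times q$-characteristic cover (where, as you verify, they remain embedded and pairwise disjoint), note that each component of $p^{-1}(m_i)$ covers the primitive slope $m_i$ with degree exactly $q$ and is again primitive, so its length on the lifted horotorus is $q\,{\rm length}(m_i)\geq qk>2\pi$ once $q>2\pi/k$, and then apply the $2\pi$-lemma of Remark \ref{universal constant} to conclude that $\t{Q}(p^{-1}(m_1\cup\dots\cup m_r))$ is a closed hyperbolic manifold, whose discrete faithful representation has positive volume by Theorem \ref{basic property of volume of presentation}(2). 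The paper fills \emph{downstairs} instead: it surgers $Q$ itself along the non-primitive coefficients $(qa_i,qb_i)$, whose norm is at least $q>C$, so Theorem \ref{surgered} --- whose constant depends only on $Q$ --- yields a complete hyperbolic \emph{orbifold} $Q((qa_1,qb_1),\dots,(qa_r,qb_r))$; the characteristic covering then induces an orbifold covering $\hat{p}$ from the non-singular filled cover onto this orbifold, and the desired representation is the pull-back $\rho\circ\hat{p}_{\ast}$ of the orbifold holonomy. Your route avoids orbifolds entirely, at the price of the two lifting facts you flag (both standard, and both correctly handled); note that this price cannot be dodged by citing Theorem \ref{surgered} on $\t{Q}$ itself, since its constant depends on the manifold being filled and $\t{Q}$ varies with $q$, so the $2\pi$-lemma on the lifted horotori is genuinely needed, exactly as your main argument reflects. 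What the paper's construction buys, and yours does not, is explicit boundary control: the pulled-back representation kills every lift of $m_i$ and sends all lifts of the dual slope $l_i$ to conjugates of the single element $\rho(l_i)^q$, and precisely this control (conditions (1) and (2) in the proof of Proposition \ref{hyper-virt-nonzero}) is what Section 6 later consumes; your representation also kills the lifted meridians, but it sends the lifted longitudes to the holonomies of the various core geodesics of the filling tori, which need not be conjugate to a common element. So your argument proves the lemma exactly as stated, while the paper's proof establishes a stronger, unstated property on which the rest of the paper relies.
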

\begin{proof}
Choose a prime number $p_0$ such that $p_0>\max\{C,q_0\}$, where $C$
is given in Theorem \ref{surgered}  and $q_0$ is the prime number
given in  Lemma \ref{cover},  applied to $Q$. Thus given a slope
$m_i=a_i\lambda_i+b_i\mu_i$ in $T_i$, with $(a_i,b_i)$ co-prime,
then for any prime number $q\geq p_0$ we have $\|(qa_i,qb_i)\|_2> C$
for $i=1,...,r$. Therefore we can apply Theorem \ref{surgered}
implying that $Q((qa_1,qb_1),...,(qa_r,qb_r))$ is a hyperbolic
orbifold. In particular there exists a representation
$\rho\co\pi_1Q((qa_1,qb_1),...,(qa_r,qb_r))\to{\rm PSL}(2;{\C})$
with positive volume and such that for each $i=1,...,r$ the element
$\rho(m_i)$ is conjugated to  $\begin{pmatrix}
e^{i\pi/q}&0 \\
0&e^{-i\pi/q}
\end{pmatrix}$ and $\rho(l_i)$ is conjugated to
$\begin{pmatrix}x_i&0 \\
0&x_i^{-1}
\end{pmatrix}$ for some $x_i\in{\C}^{\ast}$, where $l_i$ is the slope $c_i\lambda_i+d_i\mu_i$ in $T_i$ with $a_id_i-b_ic_i=1$. On the other hand, Lemma \ref{cover} applies to $Q$ for any prime number $q\geq p_0$ which gives rise to a finite covering $p\co\t{Q}\to Q$. Since it induces the $q\times q$-characterstic covering on the boundary then it induces an orbifold finite covering
$\hat{p}\co\t{Q}(p^{-1}(m_1\cup...\cup m_r))\to Q((qa_1,qb_1),...,(qa_r,qb_r))$. Accordingly the composition $\rho\circ\hat{p}_{\ast}$ is a representation of the (non-singular) manifold $\t{Q}(p^{-1}(m_1\cup...\cup m_r))$ whose volume is positive. This completes the proof of the lemma.
\end{proof}

\begin{proof}[Proof of Proposition \ref{hyper-virt-nonzero}]

Suppose that $N$ has a hyperbolic piece  $Q$ such that
each non-separating component of $\partial Q$ in $N$ is shared by a
Seifert piece. Let $N=Q\cup(Q_1\cup...\cup Q_l)$, where each $Q_i$
is a component of $N\setminus Q$.  For each component $Q_i$ with
connected boundary we fix  its meridian-longitude basis
$(\mu_i,\lambda_i)$; and for each component $Q_j$ with non-connected
boundary $T_j^1,...,T_j^{l_j}$, by the condition posed on $Q$  we denote
by $h^l_j$, $l=1,...,l_j$ the regular fiber of the Seifert piece
adacent to $Q$ represented in $T^l_j$.

For each component $Q_i$ denote by $q_i$ the prime number  such that
for any prime number $q\geq q_i$ there exists a $q\times
q$-characteristic finite covering map $\t{Q}_i\to Q_i$ satisfying
the conclusion of Lemma \ref{cover}. Notice that when $\b Q_i$ is
connected, for each component ${T}_i^k\subset\b\t{Q}_i$,
$k=1,...,r_i$,  over $\b Q_i$ then each component ${\mu}^k_i$ of
$p^{-1}(\mu_i)\cap{T_i^k}$ and ${\lambda}^k_i$ of
$p^{-1}(\lambda_i)\cap{T_i^k}$ is a basis of $H_1({T_i^k};{\Z})$.
When $\b Q_j$ is non-connected then we fix a trivialization of the
Seifert pieces adjacent to $\b\t{Q}_j$ providing a section-fiber
basis of $H_1(T_j^l;{\Z})$ for each component of $\b\t{Q}_j$.

We apply Lemma \ref{hvp} to $Q$ with the family of slopes
$\{\mu_i\}_i$ and $\{h^l_j\}_{l,j}$   and we denote by $q$ a prime
number such that $q>\max\{p_0,q_1,...,q_l\}$ with the corresponding
covering $\t{Q}\to Q$.     We denote by $p\co{M}\to N$ a finite
covering such that each component of $p^{-1}(Q_i)$, resp.
$p^{-1}(Q)$  is homeomorphic to $\t{Q}_i$ for $i=1,...,l$, resp. to
$\t{Q}$. Such a covering can be constructed following the arguments
of  \cite{Lu}.

For each component $\t{Q}$ of $p^{-1}({Q})$, let $\hat
Q=\t{Q}\left(\cup_ip^{-1}\left(\mu_i\right)\cup_{l,j}p^{-1}\left(h^l_j\right)\right)$.
By Lemma \ref{hvp}, there exists a representation
$$\rho\co\pi_1\hat
Q\to{\rm PSL}(2;{\C})$$ such that $${\rm vol}\left(\hat
Q,\rho\right)>0$$ and satisfying the following conditions:

(1) when $\b Q_i$ is connected  $\rho({\mu}^k_i)$ is trivial and
$\rho({\lambda}^k_i)$, $k=1,...,r_i$ are all conjugated to the same
element  of type $\begin{pmatrix}
x_i&0 \\
0&x_i^{-1}
\end{pmatrix}$ where $x_i\in{\C}^{\ast}$;

(2) when $\b Q_i$ is non-connected then the fibers of the Seifert
pieces adjacent to $\b\t{Q}$, over $\b Q_i$, are sent to the trivial element under
$\rho$.

\begin{remark} \label{simple}
Notice that even though the representations $\rho$ should be indexed by the components of $p^{-1}({Q})$ we keep the same notations for all of them  for the sake of simplicity.
\end{remark}
When $\b Q_i$ is connected we choose a basis $e_i^1,....e_i^{n_i}$
of the torsion-free submodulus of $H_1(Q_i;{\Z})$ so that
$\lambda_i\in\l e_i^1\r$. Denote by $k_i$ the non-trivial integer
such that $\lambda_i=k_i.e_i^1$ and choose a complex number
$\xi_i=a_ie^{i\theta_i}\in{\C}^{\ast}$ such that $\xi_i^{qk_i}=x_i$
and consider the homomorphism $\eta_i\co\l e_i^1\r\to{\rm
PSL}(2;{\C})$ sending $e_i^1$ to $\begin{pmatrix}
\xi_i&0 \\
0&\xi_i^{-1}
\end{pmatrix}$.
 For each component $\t{Q}_i$  of $p^{-1}({Q}_i)$, consider the representation  given by the composition
 $$\pi_1\t{Q}_i\to H_1(\t{Q}_i;{\Z})\stackrel{p_{\sharp}}{\to} H_1(Q_i;{\Z})\to\l e_i^1\r\stackrel{\eta_i}{\to}{\rm PSL}(2;{\C})$$
where $H_1(Q_i;{\Z})\to\l e_i^1\r$ denotes the natural projection
onto the ${\Z}$-factor spanned by $e_i^1$. Note the composition of
the above homomorphisms sends ${\mu}^k_i$ to the unit of ${\rm
PSL}(2;{\C})$, since ${\mu}^k_i$ was sent to the  torsion part of
$H_1( Q_i, \Z)$ first, and then was sent to the unit under the
natural projection. Therefore the composition of the above
homomorphisms gives rise to a cyclic representation
$$\eta\co\pi_1\hat Q_i\to{\rm PSL}(2;{\C})$$
where $\hat Q_i=\t{Q}_i(\cup_{1}^{r_i}{\mu}^k_i)$, such that
$$\eta(\lambda^k_i)=\begin{pmatrix}
x_i&0 \\
0&x_i^{-1}
\end{pmatrix},$$ and $${\rm vol}(\hat Q_i,\eta)=0$$
Indeed, the former equality can be verified in the following way. The path
$$(\eta_{i})_t(e_i^1)=\begin{pmatrix}
\omega_t e^{it\theta_i}&0 \\
0&\frac{1}{\omega_t}{e^{-it\theta_i}}
\end{pmatrix}$$ of representations $<e_i^1>\to {\rm PSL}(2;{\C})$,
where $\omega_t=ta_i +(1-t)$,  provides   a path of representations
$$\eta_t\co\pi_1\hat Q_i\to{\rm
PSL}(2;{\C})$$ such that $\eta_1=\eta$ and $\eta_0$ is the trivial
representation.

Consider the associated path of flat connections $A_t$. This path
defines a connection $\mathbb{A}$ on the product $\hat
Q_i\times[0,1]$ that is no longer flat but whose curvature
$F^{\mathbb{A}}$ satisfies the equation $F^{\mathbb{A}}\wedge
F^{\mathbb{A}}=0$ (this latter point follows from the fact that
$F^{A_t}=0$ for any $t$). Hence it follows from the construction of
the Chern Simons invariant (paragraph 5.3) combined with the Stokes
formula that $\mathfrak{cs}_{\hat Q_i}^*(A_0)=\mathfrak{cs}^*_{\hat
Q_i}(A_1)=1$. Therefore  ${\rm vol}(\hat Q_i,\eta)=0$. Mind that
Remark \ref{simple} still applies to $\eta$.

Suppose now that $\b Q_j$ isn't connected and denote by $\t{Q}_j$ a component over $Q_j$. Let $\{S_j^l\}_l$ denote the Seifert pieces of  $\t{Q}_j$ adjacent to $\b\t{Q}_j$ with a fixed trivialization $F_j^l\times{\S}^1$. Denote by $\{s_{k,l}\}_{k,l}$ the components of $\b F_j^l\cap\b\t{Q}_j$, by $\{s'_{\kappa,l}\}_{\kappa,l}$ the components of $\b F_j^l\setminus\b F_j^l\cap\b\t{Q}_j$ and by $\t{h_l}$ its ${\S}^1$-fiber.
 Denote by $\hat{F}_j^l$ the surface obtained
from $F_j^l$ after crunching each  boundary component $s'_{\kappa,l}$
into a point and denote by $A$ the set of "singular points" obtained by crunching each components of $\cup_l(\b F_j^l\setminus\b F_j^l\cap\b\t{Q}_j)$ to a point. 
Consider the following relation on $\t{Q}_j$: $x\c{R}y$  iff  either $x$ and $y$ lie on $\o{\t{Q}_j\setminus\cup_l S_j^l}$ or $x$ and $y$ lie on the same ${\S}^1$-fibre of a Seifert piece  $F_j^l\times{\S}^1$ for some $l$. Consider the \emph{crunching} map $\xi\co\t{Q}_j\to\t{Q}_j/\c{R}$. 
  The quotient space $\t{Q}_j/\c{R}$ is homeomorphic to $(\cup_l\hat{F}_j^l)/A$. The fundamental group of $\t{Q}_j/\c{R}$ is an extension of $\ast_l\pi_1\hat{F}_j^l$ by some cycles $\gamma_1,...,\gamma_{\nu}$ obtained after identifying all points of $A$.    Fixing $A/A$ as a base point in $\t{Q}_j/\c{R}$
  we define a representation $\phi\co\pi_1(\t{Q}_j/\c{R})\to{\rm PSL}(2;{\C})$   setting
\begin{eqnarray}
\forall l\ \phi(\t{h_l})=1; &  \forall k,l \ \phi(s_{k,l})=\rho(s_{k,l}); & \forall \kappa,l
\  \phi(s_{\kappa,l}')=1\ {\rm and}\ \forall i\  \phi(\gamma_i)=1.
\end{eqnarray}
Such  a representation can be  defined since  $\hat{F}_j^l$ has a positive genus and
since  each element of ${\rm PSL}(2;{\C})$ is a commutator by
\cite{SW}. Denote by $\hat{Q}_j$ the closed manifold $\t{Q}_j\left(\cup_{l}p^{-1}\left(h^l_j\right)\right)$. By our construction the map $\xi$ induces a homomorphism $\xi_{\ast}\co\pi_1\t{Q}_j\to\pi_1(\t{Q}_j/\c{R})$ that factors through $\pi_1\hat{Q}_j$ giving rise to a representation $\pi_1\hat{Q}_j\to\pi_1(\t{Q}_j/\c{R})\to{\rm PSL}(2;{\C})$, still denoted by $\phi$, and satisfying 
$${\rm vol}\left(\hat
Q_j,\phi\right)=0,$$

 Indeed we first recall the classical long exact sequence of reduced cohomology groups  applied to $\cup_l\hat{F}_j^l\to(\cup_l\hat{F}_j^l)/A$, namely:
$$... \to\t{H}^{n-1}(A)\to\t{H}^n((\cup_l\hat{F}_j^l)/A)\to\t{H}^n(\cup_l\hat{F}_j^l)\to\t{H}^n(A)\to ....$$
This leads to an isomorphism $H^3(\t{Q}_j/\c{R})\simeq H^3(\cup_l\hat{F}_j^l)\simeq\{0\}$. 
  Using the volume defined via the
continuous cohomology (see paragraph 3.2) the induced homomorphism
$$\phi^*\co H^3_{\rm cont}({\rm PSL}(2;{\C}))\to H^3\left(\hat
Q_j\right)$$ factors through $H^3(\t{Q}_j/\c{R})$ that is  is
trivial and
therefore $\phi^*(\omega_{\Hi^3})$ vanishes and

$${\rm vol}_{{\rm
PSL}(2;{\C})}(\hat
Q_j,\phi)=\left|\int_M\phi^{\ast}(\omega_{\Hi^3})\right|=0.$$

To finish the proof we proceed as follows: we choose flat
$\mathfrak{sl}_2({\C})$-connections $A$,  $B_i$ (when $\b Q_i$ is
connected) and $B_j$ (when $\b Q_j$ is non-connected), in normal
form and  corresponding to the representations $\rho|\pi_1\t{Q}$,
$\eta|\pi_1\t{Q}_i$ and $\phi|\pi_1\t{Q}_j$ over each component
$\t{Q}$, $\t{Q}_i$ and $\t{Q}_j$ of $p^{-1}(Q)$, $p^{-1}(Q_i)$ and
$p^{-1}(Q_j)$. By our construction they can be glued together in a
smooth and flat way giving rise to a global representation
$\psi\co\pi_1M\to{\rm PSL}(2;{\C})$. By the additivity principle (a
general for m of (5.17)) we know that $${\rm vol}(M,\psi)=d{\rm
vol}\left(\hat Q,\rho\right)>0$$ where $d$ denotes  the number of
components of $p^{-1}(Q)$. This completes the proof of Proposition
\ref{hyper-virt-nonzero}.
\end{proof}

\section{Manifolds with  positive Gromov simplical volume but vanishing hyperbolic volumes}

\begin{proof}[Proof of Proposition \ref{hyper-zero}] We first
begin by constructing an $1$-edged manifold with a hyperbolic piece
adjacent to a Seifert piece whose hyperbolic volume vanishes.  Let
$M_1$ denote $F\times{\S}^1$ where $F$ is a surface with positive
genus and connected boundary. There is a natural section-fiber basis
$(s,h)\subset \partial M_1$. On the other hand, it follows  from
\cite{HM} that  there are infinitely many one cusped, complete,
finite volume hyperbolic manifolds $M_2$ endowed with a basis
$(\mu,\lambda)\subset \partial M_2$ such that both $M_2(\lambda)$
and $M_2(\mu)$ have zero simplicial volume (because they are
actually connected sums of lens spaces). Denote by $\varphi\co\b
M_1\to \b M_2$ the homeomorphism defined by $\varphi(s)=\mu$ and
$\varphi(h)=\lambda^{-1}$. Let $M_{\varphi}=M_1\cup_{\varphi}M_2$.
Then $M_{\varphi}$ is an one-edged Haken manifold  with positive
simplicial volume. Denote $\c{T}_{M_{\varphi}}$ by $T$.

Let  $\rho\co\pi_1M_{\varphi}\to{\rm PSL}(2;{\C})$ be any
representation and denote by $A$ the resulting connection over
$M_{\varphi}$. Notice that either $\rho(s)$ or $\rho(h)$ is trivial.
Indeed  if $\rho(h)\not=1$, its centralizer $Z(\rho(h))$ must be
abelian in ${\rm PSL}(2;{\C})$. Since $h$ is central in $\pi_1M_1$,
this means that  $\rho(\pi_1M_1)$ is abelian. Since $s$ is
homologically   zero in $M_1$,   then $\rho(s)=1$.

Let $\zeta$ be either $s$ or $h$ so that  $\rho(\zeta)=1$. After
putting $A$ in normal form with respect to $T$, denote by $A_1$ and
$A_2$ the flat connections over $M_1$ and $M_2$ respectively. Since
$\rho(\zeta)$  is trivial then $A_1$ and $A_2$ do extend over
$M_1(\zeta)$ and $M_2(\zeta)$ to flat connections $\hat{A}_1$ and
$\hat{A}_2$,  and thus
$$\mathfrak{cs}^*_{M_{\varphi}}(A)=\mathfrak{cs}^*_{{M}_1(\zeta)}(\hat{A}_1)\times\mathfrak{cs}^*_{{M}_2(\zeta)}(\hat{A}_2)$$
Eventually taking the imaginary part we get
\begin{eqnarray}
{\rm vol}(M_\varphi,\rho)={\rm vol}({M}_1(\zeta),\hat{\rho}_1)+ {\rm
vol}({M}_2(\zeta),\hat{\rho}_2)
\end{eqnarray}
 where $\hat{\rho}_i$
denotes the extension of $\rho|\pi_1M_i$ to $\pi_1M_i(\zeta)$. Since
both ${\rm vol}({M}_1(\zeta),\hat{\rho}_1)$ and  ${\rm
vol}({M}_2(\zeta),\hat{\rho}_2)$ do vanish,  the proof of
Proposition \ref{hyper-virt-nonzero} is complete.
\end{proof}
\begin{remark}
It is worth mentioning that inequality $(7.1)$ still holds replacing
$M_2$ by any one cusped oriented complete hyperbolic manifold. In
this more general case we have ${\rm
vol}({M}_2(\zeta),\hat{\rho}_2)<{\rm vol}M_2$ and therefore, the
volume of the hyperbolic piece of $M_{\varphi}$ is never reached by
the representations of $\pi_1M_{\varphi}$ into ${\rm PSL}(2;{\C})$.
\end{remark}

\section{Volumes of representations of 1-edged 3-manifolds}
In this section we verify Propositions \ref{Seifert-Seifert} and \ref{Seifert-hyperbolic}.
Let $N=Q_-\cup_\tau Q_+$ be a one-edged 3-manifold, where the gluing
map $\tau:
\partial Q_-=T_-\to \partial Q_+=T_+$ is an orientation reversing homeomorphism.
Recall that on each $T_\e$ we fix a basis $T_{\e}(s_\e,h_\e)$ as in paragraph 1.4, (A) and (B). Denote by $A=\begin{pmatrix}
a&b \\
c&d
\end{pmatrix}$ the integral matrix of $\tau$ under the basis $(s_-,h_-)$ and $(s_+,h_+)$,
such that ${\rm det}A=-1$,   and
\begin{eqnarray}
\tau(s_-)=as_++ch_+,\,\,\, \tau(h_-)=bs_++dh_+
\end{eqnarray}
 It follows from
Lemma \ref{cover} and \cite{DW1}
 that $N$ admits a $n$-sheeted covering $\t{N}$, where $n$
depends only on the pieces $Q_-$ and $Q_+$, that induces the (say) $q\times q$-characteristic covering over  $T=\c{T}_N$,  such that:

\begin{enumerate}
{\item $\t{N}=\t Q_-\cup_{\t \tau} \t Q_+$, where $\t Q_\e$ covers $Q_\e$ for $\e=\pm$ and $\t{Q_\e}$  is a product of a surface of genus $\geq p+2$ with
${\S}^1$ if $Q_\e$ is a Seifert piece,}

{\item when $Q_-$ and $Q_+$ are both Seifert manifolds then $\t Q_-$
and $\t Q_+$ can be chosen  connected so that $\t{N}$ is a $p$-edged
manifold with $p\geq 2$} (A $p$-edged 3-manifold is a manifold whose dual graph
consists of 2 vertices and $p$ edges and each edge is shared by the two
vertices),

{\item The basis $T_{\e}(s_\e,h_\e)$ can be lifted, and denoted by
$T_{\e}^j(s_\e^j,h_\e^j)$ for  $j=1,...,p$, and the matrix of the gluing
$\t\tau: T_{-}^j\to T_{+}^j$ is the same than  $A$ for all $j$ under  the
lifted basis, where the $T_{\e}^j$'s denote the components of $\b\t{Q}_{\e}$.}
\end{enumerate}

\subsection{Both pieces are Seifert} This subsection is devoted to the proof of Proposition
\ref{Seifert-Seifert}. It follows from the paragraph above that  it is sufficient to check
the following (simply remember $p=\frac n{q^2}$).

\begin{lemma}\label{computs}
Let $N$ denote a closed  oriented graph manifold satisfying  conditions
(1)-(3). Denote by $G$ the group ${\R}\times_{\Z}\t{{\rm
SL}_2(\R)}$.

(i) if $a=d=0$  then  $8\pi^2p\in{\rm vol}\left(N,G\right)$,

 (ii)  if $ac\not=0$ then $4\pi^2p/|ac|\in{\rm vol}\left(N,G\right)$,

 (iii) if $cd\not=0$ then $4\pi^2p/|cd|\in{\rm vol}\left(N,G\right)$,

 (iv) if  $c=0$  then $4\pi^2p/|b|\in{\rm vol}\left(N,G\right)$.
 \end{lemma}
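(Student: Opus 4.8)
The plan is to construct the representation $\varphi\co\pi_1\t N\to G$ one geometric piece at a time and then extract its volume from the additivity principle (the general form of (5.17)) together with the explicit Seifert computation of Proposition \ref{SvSm}. Using conditions (1)--(3) we may assume $\t N=\t Q_-\cup_{\t\tau}\t Q_+$ with each $\t Q_\e=F_\e\times{\S}^1$ a trivial circle bundle over a surface of genus $\geq p+2$ carrying $p$ boundary tori $T^1,\dots,T^p$, fiber $h_\e$ and sections $s^j_\e$, and with every gluing matrix equal to the fixed $A=\begin{pmatrix}a&b\\c&d\end{pmatrix}$, $\det A=-1$.

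First I would choose, on each torus $T^j$, a \emph{kernel slope} $c_j$, i.e. a simple closed curve that $\varphi$ will send to the identity and along which both sides are to be Dehn filled. Writing $c_j=a^-_js^j_-+b^-_jh^j_-$ in the minus basis, the gluing forces its plus coordinates to be $(a^+_j,b^+_j)^{t}=A\,(a^-_j,b^-_j)^{t}$. The point is that filling $\t Q_\e$ along $c_j$ yields a \emph{closed} Seifert manifold $\t Q_\e(\{c_j\})$ with $p$ singular fibers of type $(a^\e_j,b^\e_j)$ and Euler number $e_\e=\sum_{j=1}^p b^\e_j/a^\e_j$, and that on such a filling the requirement $\varphi(c_j)=1$ is exactly the singular-fiber relation $s^{a}h^{b}=1$ honoured by the representations of Proposition \ref{SvSm} (relation (4.5)). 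Thus I would take $\varphi$ restricted to each piece to be a Seifert representation as in (4.2)--(4.3), sending the fiber to the central element $\o{(\zeta_\e,1)}$ with $\zeta_\e=\tfrac1{e_\e}\bigl(\sum_in^\e_i/a^\e_i-n^\e\bigr)$; such representations exist because, the genus being $\geq p+2$, the inequalities (4.1) leave ample room to prescribe the integers $n^\e_i,n^\e$, while the conjugators $g_i$ in (4.2) provide the freedom needed to match the elliptic parts on the tori.

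Once the two connections are put in normal form near $T=\cup_jT^j$ and are seen to agree there, the additivity principle applied over the $p$ tori yields
\[
{\rm vol}_G(\t N,\varphi)={\rm vol}_G(\t Q_-(\{c_j\}),\hat\varphi_-)+{\rm vol}_G(\t Q_+(\{c_j\}),\hat\varphi_+),
\]
and each summand equals $4\pi^2\tfrac1{|e_\e|}\bigl(\sum_in^\e_i/a^\e_i-n^\e\bigr)^2$ by Proposition \ref{SvSm}. The four cases then reduce to choosing $c_j$ according to the vanishing entry of $A$ and reading off the Euler numbers. In case (i) one has $b=c=\pm1$, the gluing interchanges fiber and section, the diagonal slope $c_j=s^j_-+h^j_-$ keeps \emph{both} fillings genuinely Seifert with invariants $(1,1)$ and $|e_\pm|=p$, and taking $\sum_in^\e_i-n^\e=\pm p$ makes each piece contribute $4\pi^2p$, whence $8\pi^2p$. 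In case (ii), $ac\neq0$, I would fill the minus piece along its section $s_-$, so that piece has $e_-=0$ and contributes $0$, whereas the plus filling acquires invariants $(a,c)$ and Euler number $pc/a$; choosing $\sum_in^+_i-an^+=\pm p$ gives $4\pi^2p/|ac|$. In case (iii), $cd\neq0$, I would symmetrically fill the plus piece along $s_+$, whereupon the minus filling has invariants $(-d,c)$ and Euler number $-pc/d$, giving $4\pi^2p/|cd|$. In case (iv), $c=0$, filling the plus piece along its fiber $h_+$ kills $\zeta_+$ so that piece contributes $0$, while the minus filling has invariants $(b,-a)$ and Euler number $-pa/b$, giving $4\pi^2p/|b|$. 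In all cases $p=n/q^2$, the factor $q^{-2}$ entering only through the $q\times q$-characteristic covering, which rescales the filling slopes.

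The hard part will be the simultaneous boundary bookkeeping. One must choose the kernel slopes, the central fiber values $\zeta_\pm$ and the integers $n^\e_i,n^\e$ on all $p$ tori at once so that three requirements hold together: the slope equation $(a^+_j,b^+_j)^{t}=A\,(a^-_j,b^-_j)^{t}$; the inequalities (4.1) on each filled piece; and the equality of the \emph{full} boundary holonomies (not merely their rotation numbers) of $\varphi_-$ and $\varphi_+$ on each $T^j$, which is what permits a single normal form near $T$ and hence the use of Proposition \ref{KKh} inside the additivity principle. The organising principle behind the case split (i)--(iv) is precisely the combinatorics of which entry of $A$ vanishes: this determines which boundary curve can be sent into $\ker\varphi$ and therefore which of the two Seifert pieces carries the volume.
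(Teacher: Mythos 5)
You follow the paper's proof in outline---piecewise representations supplied by Proposition \ref{SvSm}, a killed slope on each torus, additivity, and the case split governed by the vanishing entries of $A$---and cases (i) and (iv) can indeed be completed exactly as you describe, since there the forced fiber translations are integral (case (i): $\zeta_\e=\pm 1$; case (iv): the plus fiber is killed and $\zeta_-=\mp a=\pm1$), so the representations take values in $\t{{\rm SL}_2(\R)}$, where (5.17) is available. The genuine gap is in cases (ii) and (iii). By Proposition \ref{SvSm} the filled piece $Q_+((a,c),\dots,(a,c))$ has ${\rm vol}=4\pi^2|e_+|\zeta_+^2$ with $|e_+|=p|c|/|a|$, so hitting the value $4\pi^2p/|ac|$ \emph{forces} $\zeta_+=\pm1/c$ (and the same value $\pm1/c$ is forced in case (iii)); hence for $|c|\geq2$ your representation $\varphi$ has non-integral translation part and cannot be conjugated into $\t{{\rm SL}_2(\R)}$. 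But the additivity principle is established in the paper only for flat connections on trivialized bundles: since $G={\R}\times_{\Z}\t{{\rm SL}_2(\R)}$ is homotopy equivalent to a circle, Proposition \ref{vol}, from which (5.17) is derived, explicitly assumes that $N\times_\varphi G$ admits a section, equivalently that $\varphi$ lifts to ${\R}\times\t{{\rm SL}_2(\R)}$. Your $\varphi$ admits no such lift: any lift sends each $s^j_-$ to a central element $(M_j,{\rm sh}(-M_j))$, and the relation ``product of boundary curves $=$ product of commutators'', used once in $F_+$ and once in $F_-$, shows a lift can exist only if $\sum_jn_j-an\in pc{\Z}$, whereas your volume normalization requires $\sum_jn_j-an=\pm p$; these are incompatible once $|c|\geq2$. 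So the step ``the additivity principle applied over the $p$ tori yields\dots'' breaks down precisely in the two cases that carry the content of the lemma.

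The move you are missing is the paper's covering trick: build the $|c|$-fold cyclic covering $\t{N}_c\to N$ induced by $\pi_1Q_+\to\z{c}$, $s^j_+\mapsto\o{0}$, $h^j_+\mapsto\o{1}$, and $\pi_1Q_-\to\z{c}$, $s^j_-\mapsto\o{0}$, $h^j_-\mapsto\o{d}$ (these agree across the gluing because $\det A=-1$). Upstairs the pulled-back representation unwraps the fiber direction and lands in $\t{{\rm SL}_2(\R)}$; additivity now legitimately gives ${\rm vol}(\t{N}_c,\varphi\circ q_*)=4\pi^2p/|a|$, and the degree formula of Theorem \ref{basic property of volume of presentation} (4) converts this into ${\rm vol}(N,\varphi)=4\pi^2p/|ac|$; case (iii) is the same with $A$ replaced by $A^{-1}$. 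Two smaller repairs. First, the vanishing of the $e=0$ filled piece cannot be quoted from Proposition \ref{SvSm}, whose formula presupposes $e\neq0$; it holds because the representation there kills $\pi_1F_-$, hence factors through ${\Z}$, so the pulled-back volume class dies in degree $3$. Second, the boundary bookkeeping you defer must be carried out with choices \emph{uniform in} $j$ (same $n_j$, same conjugator $g_j$ on every torus): the boundary fibers $h^j_-$ of $F_-\times{\S}^1$ are all literally the same central element of $\pi_1Q_-$, so the plus side must send every curve $\tau(h^j_-)$ to one and the same element of $G$, which fails for non-uniform choices.
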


\begin{proof}
Recall first that in this case $b\not=0$   since $N$ is not a Seifert
manifold.

Denote by $Q_-=F_-\times{\S}^1$  and by $Q_+=F_+\times{\S}^1$ the
two (connected)  Seifert pieces, and recall that $(s_-^i, h^i_-)$ and $(s^i_+,
h^i_+)$ are section-fiber basis of $\partial Q_-$ and $\partial Q_+$
respectively.

Pick some base points $x_-\in{\rm int}Q_-$, $x_+\in{\rm int}Q_+$ and
choose $p$ arcs connecting $x_-$ with $h^i_-\cap s^i_-$, resp. $x_+$
with $h^i_+\cap s^i_+$, to see these elements in $\pi_1Q_-$, resp.
in $\pi_1Q_+$.

(i) Suppose first $a=d=0$. Then $b=c=\pm 1$. We get directly a
representation $\rho\co\pi_1N\to\t{{\rm SL}_2(\R)}$ defined by
$\rho(s^i_{\e})={\rm sh}(1)$ and $\rho(h^i_{\e})={\rm sh}(1)$. Such
a representation does exist since the genus of $F_+$ and $F_-$ $\geq
p+2$. The additivity principle gives
$${\rm vol}(N,\rho)={\rm vol}(Q_+(-1,1),\rho|)+{\rm vol}(Q_-(1,-1),\rho|)$$
By Proposition \ref{SvSm} we know that ${\rm vol}(Q_{\e}(-1,1),\rho|)={\rm vol}(Q_{\e}(1,-1),\rho|)=4\pi^2p$ hence the proof of point (i) follows.

(ii) Let's now assume $ac\not=0$.  Then the closed manifold
$Q_+((a,c),....,(a,c))$ is still a Seifert manifold with Euler
number $\pm pc/a$.

In Proposition \ref{SvSm} by choosing
$z_1=...=z_p=n=0$ and $n_1=...=n_p=1$  we have a representation
$\rho_+ \co\pi_1Q_+\to{\R}\times_{\Z}\t{{\rm SL}_2(\R)}$ such that
\begin{eqnarray}
\rho_+(s^i_+)=\o{\left(0, {\rm sh}\left(-\frac{1}{a}\right)\right)},
\rho_+(h^i_+)=\o{\left(\frac{1}{c},1\right)}
\end{eqnarray} since
$g(F_+)
>p$, the condition (4.1) is clearly satisfied.
According to (4.2) and (4.3), we have
\begin{eqnarray}
 \rho_+(s^i_-)=\o{(0, {\rm sh}(0))}, \,\,\,\rho_+(h^i_-)=\o{\left(\frac{d}{c},{\rm
sh}\left(-\frac{b}{a}\right)\right)}
\end{eqnarray}
Now we can extend $\rho_+$  to
 ${\rho}\co\pi_1N \to{\R}\times_{\Z}\t{{\rm SL}_2(\R)}$ merely by sending the whole subgroup  $\pi_1F_-$ of  $\pi_1Q_-$ to the unit of $G$.
To apply the additive principle, we need further to construct a
$c$-fold cyclic covering  $q\co \t{N}_c\to N$ so that  the induced
representation $\t{\rho}={\rho}\circ q_*\co\pi_1\t{N}_c\to
{\R}\times_{\Z}\t{{\rm SL}_2(\R)}$ has image in $\t{{\rm
SL}_2(\R)}$.  This covering $p\co \t{N}_c\to N$ can be obtained  by
combining  the covering $p_+: \t Q_+ \to Q_+ $ defined by
$\varphi_+\co\pi_1Q_+\to\z{c}$ with $\varphi_+(s^i_+)=\o{0},
\varphi_+(h^i_+)=\o{1}$; and  the covering $p_-: \t Q_- \to Q_-$
defined by $\varphi_-\co\pi_1Q_-\to\z{c}$ with
$\varphi_-(s^i_-)=\o{0}, \varphi_-(h^i_-)=\o{d}$.
 It is easy to verify that

$$\t{\rho}(\t{s}^i_+)=\left(0, {\rm sh}\left(-\frac{1}{a}\right)\right), \t{\rho}\left(\t{h}^i_+\right)=(0, {\rm sh}(1))$$
$$\t{\rho}(\t{s}^i_-)={(0, {\rm sh}(0))}, \t{\rho}(\t{h}^i_-)=\left(0, {\rm sh}\left(-\frac{1}{a}\right)\right)$$
Where the $(\t{s}^i_{\e},\t{h}^i_{\e})$'s are the lifts of the $({s}^i_{\e},{h}^i_{\e})$'s into $\t{N}_c$. Hence indeed  $\t{\rho}$  takes its values in $\t{{\rm
SL}_2(\R)}$. Since $s^i_-=as^i_++ch^i_+$ then we get
$\t{s}^i_-=a\t{s}^i_++\t{h}^i_+$. So the Euler number $e(\t{Q}_+((a,1),...,(a,1))=
p/a$. Now according to (4.4) and $|e|=|p/a|$, we have
\begin{eqnarray}
{\rm vol}(\t{Q}_+((a,1),...,(a,1),\t{\rho})=4\pi^2\frac{1}{|e|}e^2=4\pi^2
|e|=4\pi^2|p/a|
\end{eqnarray}
On the other hand it is clear that ${\rm vol}\left(\t{Q}_-\left(\left(1,0\right),...,\left(1,0\right)\right),\t{\rho}\right)=0$.

Since $\t{{\rm SL}_2(\R)}$ is contractible, as we did in \cite{DW2}, we can
apply the additive principle to compute ${\rm
vol}(\t{N}_c,\t{\rho})$. Precisely by (5.17) we have
$${\rm vol}(\t{N}_c,\t{\rho})={\rm vol}(\t{Q}_+((a,1),...,(a,1)),\t{\rho})+{\rm vol}(\t{Q}_+((1,0),...,(1,0)),\t{\rho})=4\pi^2|p/a|$$
Since ${\rm vol}(\t{N}_c,\t{\rho})=|c|{\rm vol}(N,{\rho})$, we
have ${\rm vol}({N},{\rho})=4\pi^2p/|ac|$.
 This proves point (ii).

(iii) The proof  is the same as  that of (ii)  just  by replacing
$A$ by $A^{-1}$.

(iv) We will get directly a representation $\rho\co\pi_1N\to\t{{\rm
SL}_2(\R)}$ by first setting $\rho(h^i_-)={\rm sh}(1)$,
$\rho(s^i_-)={\rm sh}(\e_a/b)$ for $i=1,...,p$, where $\e_a$ denotes
the sign of $a$. Since $|p/b|\leq p<2g-2$  such a representation
exists. On the other hand, we get a representation
$\psi\co\pi_1Q_+\to\t{{\rm SL}_2(\R)}$ by setting $\psi(h^i_+)=1$
and $\psi(s^i_+)={\rm sh}(1/b)$. Again such a representation does
exist. Then we can use the additive principle to get ${\rm
vol}\rho=4\pi^2|p/b|$. This completes the proof of Lemma
\ref{computs} and therefore the proof of Proposition
\ref{Seifert-Seifert}.
\end{proof}

\subsection{Both Seifert and hyperbolic pieces do appear}
This subsection is devoted to the proof of Proposition
\ref{Seifert-hyperbolic}. Let's say that $Q_+$ is hyperbolic and
$Q_-$ is Seifert. The proof below is divided in two cases, which
involve quite different arguments in certain stages.

As in \cite{KK}   denote  by $D$ the  space of deformations
of hyperbolic structure on ${\rm int}Q_+$ near the complete one $d_0\in D$. Since $Q_+$ has only one cusp  there are functions
$$(\alpha, \beta)\co D^{\ast}=D\setminus\{d_0\}\to{\C}^{2}$$
such that  for each $d\in D^{\ast}$ there exists a representation
$\rho^+_d\co\pi_1Q_+\to{\rm PSL}(2;{\C})$ induced on the boundary by
the representation
 \begin{eqnarray}
 s_+\mapsto\begin{pmatrix}
e^{2i\pi\alpha}&0 \\
0&e^{-2i\pi\alpha}
\end{pmatrix}\ {\rm and}\ h_+\mapsto\begin{pmatrix}
e^{2i\pi\beta}&0 \\
0&e^{-2i\pi\beta}
\end{pmatrix}
\end{eqnarray}
In this situation the map $D^{\ast}\ni
d\mapsto(\alpha,\beta)\in{\C}^2$ is a lifting of the composition map
$$ D^*\to\c{ER}^0_M({\rm PSL}(2;{\C}))\to\c{ER}^0_{\b M}({\rm
PSL}(2;{\C}))$$ By the Thurston Hyperbolic Dehn filling Theorem
there is a constant $C>0$ such that if (say) $\|(a,c)\|_2>C$ then
there exists $d\in D^{\ast}$ such that
 \begin{eqnarray}
a\alpha+c\beta=1/2
\end{eqnarray}
Let $V={\bf D}^2\times{\S}^1$ be a solid torus endowed with the standard meridian-parallel basis $(m,l)$. The representation $\rho^+_d$ extends to a complete
and faithful representation
$$\hat{\rho}^+_d\co\pi_1Q_+(a,c)\to{\rm PSL}(2;{\C})$$
where $Q_+(a,c)$ is obtained by gluing $\b V$ to $\b Q_+$ identifying
the meridian of $V$ with the curve $as_++ch_+$. Let $\hat{A}^+_d$
denote the connection over ${Q}_+(a,c)$ in normal hyperbolic form
over $\b{Q}_+$ which decomposes into $A^+_d\cup A^0_d$ over $Q_+\cup
V$. We choose a lifting $L(\rho^+_d)$  such that $$A^+_d|\b Q_+=(i\alpha
dx+i\beta dy)\otimes X$$ in the basis $(s_+,h_+)$. By (8.5) and (8.6) this
means that $$A^0_d|\b V=\left(i\frac{1}{2} dx+i(b\alpha+d\beta)
dy\right)\otimes X$$ in the basis $(m,l)$. The similar construction
can be done replacing $(a,c)$ by $(b,d)$.

\vskip 0.3 true cm{\bf 1. Pinching the section $s_-=\partial F_-$.}
Denote by $\rho_+\co\pi_1\t{Q}_+\to{\rm PSL}(2;{\C})$ the
representation defined by the composition  $\rho^+_d\circ p_{\ast}$,
where $p\co\t{Q}_+\to Q_+$ is the $q\times q$-characteristic
covering map defined above. This representation induces the
following relations by (8.1) and (8.5) : $\rho_+(s^j_-)$ is the
trivial element   and $\rho_+(h^j_-)$ is sent
to
$$\begin{pmatrix}
e^{2i\pi q(b\alpha+d\beta)}&0 \\
0&e^{-2i\pi q(b\alpha+d\beta)}
\end{pmatrix}$$ in ${\rm PSL}(2;{\C})$. Thus there exists a global representation $\rho\co\pi_1\t{N}\to{\rm PSL}(2;{\C})$ such that $\rho|\pi_1\t{Q}_+=\rho_+$.  Denote $\rho_-=\rho|\pi_1\t{Q}_-$.  Let $A$ be a flat connection in hyperbolic normal form with respect to $\c{T}_{\t{N}}$ such that $A=A_-\cup A_+$ where $A_-$, resp. $A_+$, is the restriction of $A$ over $\t{Q}_-$, resp. $\t{Q}_+$.  Notice that $A_+=(p|\t{Q}_+)^{\ast}(A^+_d)$.

For each $j=1,...,p$ we identify the meridian of a solid torus
$V^j_{\pm}={\bf D}^2\times{\S}^1$ with $s^j_-$ and with
$as^j_++ch^j_+$ and then we get  closed manifolds $\t{Q}_-(1,0)$ and
$\t{Q}_+(a,c)$ where $A_-$ and $A_+$, extend to  flat connections
$\hat{A}_-$ and $\hat{A}_+$ such that
$$\mathfrak{cs}^*_{\t{N}}(A)=\mathfrak{cs}^*_{\t{Q}_+(a,c)}(\hat{A}_+)\times\mathfrak{cs}^*_{\t{Q}_-(1,0)}(\hat{A}_-)$$
by the additivity principle.
\begin{remark}\label{fondamental}
Again $\hat{A}_+|V^j_+=(p|{V}^j_+)^{\ast}(A^0_d)$ but mind that $p|{V}^j_+\co{V}^j_+\to V$ is a $q\times q$-characteristic covering branched along the core of the solid torus. Therefore the lifting of the representations induced on the $V_j^{+}$'s is $(1/2,q(b\alpha+d\beta))$.
\end{remark}

Denote by $\hat{\rho}_+$, $\hat{\rho}_-$ the extension of $\rho_+$ and $\rho_-$ to $\pi_1\t{Q}_+(a,c)$ and $\pi_1\t{Q}_-(1,0)$. Splitting the former equality into real and imaginary parts according to (5.17) we get then
$${\rm vol}(\t{N},\rho)={\rm vol}(\t{Q}_+(a,c),\hat{\rho}_+)+{\rm vol}(\t{Q}_-(1,0),\hat{\rho}_-)$$
Since $\t{Q}_-$ is by construction a product of a surface with the circle then $${\rm vol}(\t{Q}_-(1,0),\hat{\rho}_-)=0$$
On the other hand, notice that, a priori, the representation ${\rho}_+$ lies by no means in the deformation space of hyperbolic structures of $\t{Q}_+$. Nevertheless we need a geometric interpretation of ${\rm vol}(\t{N},\rho)={\rm vol}(\t{Q}_+(a,c),\hat{\rho}_+)$. It follows from Remark \ref{fondamental} and from Corollary \ref{adding} applied $p$ times that
$$\mathfrak{cs}^*_{\t{Q}_+(a,c)}(\hat{A}_+)=\mathfrak{cs}^*_{\t{Q}_+}(A_+)\exp(-4i\pi pq(b\alpha+d\beta))$$
Since $\rho_+$ is induced by restriction from  $\rho^+_d$ then
$\mathfrak{cs}^*_{\t{Q}_+}(A_+)= (\mathfrak{cs}^*_{{Q}_+}(A^+_d))^{pq^2}$ therefore
$$\mathfrak{cs}^*_{\t{Q}_+(a,c)}(\hat{A}_+)=(\mathfrak{cs}^*_{{Q}_+}(A^+_d))^{pq^2}\exp(-4i\pi pq(b\alpha+d\beta))$$
Again by Corollary \ref{adding}
$$\mathfrak{cs}^*_{Q_+(a,c)}(\hat{A}_d^+)=\mathfrak{cs}^*_{{Q}_+}(A^+_d)\exp(-4i\pi(b\alpha+d\beta))$$ This leads to
$$\mathfrak{cs}^*_{\t{Q}_+(a,c)}(\hat{A}_+)=(\mathfrak{cs}^*_{Q_+(a,c)}(\hat{A}^+_d))^{pq^2}\exp(4i\pi pq(q-1)(b\alpha+d\beta))$$
and accordingly by equality (5.15) and Remark \ref{volhh} that can be applied to $\mathfrak{cs}_{Q_+(a,c)}(\hat{A}^+_d)$ for $\hat{\rho}^+_d$ is a faithful and discrete representation
\begin{eqnarray*}
\mathfrak{cs}^*_{\t{Q}_+(a,c)}(\hat{A}_+)& =\exp(2i\pi\mathfrak{cs}_{\rm L.C.}{Q}_+(a,c))^{pq^2}\times\\
&\exp\left(\frac{2pq^2}{\pi}{\rm vol}Q_+(a,c)+4i\pi pq(q-1)(b\alpha+d\beta)\right)
 \end{eqnarray*}

Again using (5.17) and splitting the former equality into real and imaginary parts we get eventually
$${\rm vol}(\t{N},\rho)={\rm vol}(\t{Q}_+(a,c),\hat{\rho}_+)=pq^2{\rm vol}Q_+(a,c)+\frac{\pi pq(q-1)}{2}{\rm length}(\gamma)$$
where $\gamma$ is the geodesic added to $Q_+$ to complete the cusp with respect to the $(a,c)$-Dehn filling.

\vskip 0.3 true cm {\bf 2. Pinching the fiber $h_-$.} By the
Thurston Hyperbolic Dehn filling Theorem   there is a constant $C>0$
such that if  $\|(b,d)\|_2>C$ then there exists $d\in D^{\ast}$ such
that (i)
\begin{eqnarray}
b\alpha+d\beta=1/2
\end{eqnarray}
 Let $V={\bf D}^2\times{\S}^1$ be a solid torus endowed with the standard meridian-parallel basis $(m,l)$. The representation $\rho_d$ extends to a complete and faithful representation $\hat{\rho}_d\co\pi_1Q_+(b,d)\to{\rm PSL}(2;{\C})$, where $Q_+(b,d)$ is obtained by gluing $\b V$ to $\b Q_+$ identifying the meridian of $V$ with the curve $bs_++dh_+$. Let $\hat{A}^+_d$ denote the connection over ${Q}_+(b,d)$ in normal hyperbolic form over $\b{Q}_+$ which decomposes into $A^+_d\cup A^0_d$ over $Q_+\cup V$. We choose a lifting $L(\rho^+_d)$ such that $$A^+_d|\b Q_+=(i\alpha dx+i\beta dy)\otimes X$$ in the basis $(s_+,h_+)$. By equation (8.7) this means that $$A^0_d|\b V=\left(i\frac{1}{2} dx+i(a\alpha+c\beta) dy\right)\otimes X$$ in the $(m,l)$ basis.

Denote by $\rho_+\co\pi_1\t{Q}_+\to{\rm PSL}(2;{\C})$ the representation defined by the composition  $\rho^+_d\circ p_{\ast}$, where  $p\co\t{Q}_+\to Q_+$ is the $q\times q$-characteristic covering map defined above. This representation induces the following relations: $\rho_+(h^j_-)$ is the trivial element and $\rho_+(s^j_-)$ is sent to    $$\begin{pmatrix}
e^{2i\pi q(a\alpha+c\beta)}&0 \\
0&e^{-2i\pi q(a\alpha+c\beta)}
\end{pmatrix}$$ in ${\rm PSL}(2;{\C})$. Again since by   \cite{SW} each element of ${\rm PSL}(2;{\C})$ is a commutator,  there exists a global representation $\rho\co\pi_1\t{N}\to{\rm PSL}(2;{\C})$ such that $\rho|\pi_1\t{Q}_+=\rho_+$.  Denote $\rho_-=\rho|\pi_1\t{Q}_-$.  Let $A$ be a flat connection in hyperbolic normal form with respect to $\c{T}_{\t{N}}$ such that $A=A_-\cup A_+$ where $A_-$, resp. $A_+$, is the restriction of $A$ over $\t{Q}_-$, resp. $\t{Q}_+$.
For each $j=1,...,p$ we identify the meridian of a solid torus ${\bf D}^2\times{\S}^1$ with $h^j_-$ and with $bs^j_++dh^j_+$  then we get  closed manifolds $\t{Q}_-(0,1)$ and $\t{Q}_+(b,d)$ where $A_+$ and $A_-$ extend to  flat connections  $\hat{A}_+$ and $\hat{A}_-$ such that
$$\mathfrak{cs}^*_{\t{N}}(A)=\mathfrak{cs}^*_{\t{Q}_+(b,d)}(\hat{A}_+)\times\mathfrak{cs}^*_{\t{Q}_-(0,1)}(\hat{A}_-)$$
Keeping the same notation as in the previous section, by splitting the former equality into real and imaginary parts according to (5.17), we get
 $${\rm vol}(\t{N},\rho)={\rm vol}(\t{Q}_+(b,d),\hat{\rho}_+)+{\rm vol}(\t{Q}_-(0,1),\hat{\rho}_-)$$
Since $\t{Q}_-$ is by construction  a connected sum of ${\S}^2\times{\S}^1$-factors, then  ${\rm vol}(\t{Q}_-(0,1),\hat{\rho}_-)=0$.
Applying the same arguments as in the former section we get
$${\rm vol}(\t{N},\rho)=pq^2{\rm vol}Q_+(b,d)+\frac{\pi pq(q-1)}{2}{\rm length}(\gamma)$$
where $\gamma$ is the geodesic added to $Q_+$ to complete the cusp with respect to the $(b,d)$-Dehn filling.
This proves Proposition \ref{Seifert-hyperbolic}.


\begin{thebibliography}{ZZZZ}





\bibitem{BCG} {\sc G. Besson, G. Courtois, S. Gallot}, {\it In\'egalit\'es de Milnor Wood g\'eom\'etriques}, Comment. Math. Helv. 82 (2007),  753--803.


\bibitem{BH} {\sc S. Bleiler, C. Hodgson},  {\it Spherical space forms and Dehn filling}.
Topology 35 (1996), no. 3, 809C833.



\bibitem{BMP}{\sc M. Boileau; S.  Maillot; J. Porti}, {\it Three-dimensional orbifolds and their geometric structures}, Panoramas et Synth\`eses, 15. Soci\'et\'e Math\'ematique de France, Paris, 2003. viii+167 pp.



\bibitem{BG1} {\sc R. Brooks, W. Goldman}, {\it The Godbillon-Vey invariant of a transversely homogeneous foliation}, Trans. Amer. Math. Soc. 286 (1984), no. 2, 651--664.

\bibitem{BG2} {\sc R. Brooks, W. Goldman}, {\it Volumes in Seifert space},  Duke Math. J.  51  (1984),  no. 3, 529--545.

\bibitem{CS}{\sc S. Chern, J. Simons}, {\it Characteristic forms and geometric invariants}, Ann. of Math. (2) 99 (1974), 48–69.



\bibitem{DW1} {\sc  P. Derbez, S. Wang}, {\it Finiteness of  mapping degrees  and ${\rm PSL}(2,{\R})$-volume on graph manifolds}, Algebraic and Geometric Topology 9 (2009) 1727--1749.

\bibitem{DW2} {\sc  P. Derbez, S. Wang}, {\it Graph manifolds have virtualy positive Seifert volume}, to appear in J. London Math. Soc.

\bibitem{DuM} {\sc W. Dunbar; G. Robert Meyerhoff}, {\it  Volumes of hyperbolic $3$-orbifolds}, Indiana Univ. Math. J. 43 (1994), no. 2, 611-637.

\bibitem{Du} {\sc J.L. Dupont},  {\it Simplicial de Rham cohomology and characteristic classes of flat bundles}, Topology 15 (1976), no. 3, 233245.

\bibitem{Du1} {\sc J.L. Dupont},  {\it Curvature and Characteristic classes}, Lecture Notes in Mathematics, Vol. 640. Springer-Verlag, Berlin-New York, 1978. viii+175 pp.

\bibitem{EHN} {\sc  D. Eisenbud, U. Hirsch, W. Neumann}, {\it Transverse foliations of Seifert bundles and self homeomorphism of the circle},  Comment. Math. Helv.  56  (1981), no. 4, 638--660.


\bibitem{GV} {\sc C. Godbillon, J. Vey},  {\it Un invariant des feuilletages de
codimension $1$.} (French) C. R. Acad. Sci. Paris S\'er. A-B 273
1971 A92-A95.

\bibitem{G} {\sc M. Gromov}, {\it Volume and bounded cohomology},  Inst. Hautes \'{E}tudes Sci. Publ. Math. No. 56, (1982), 5--99.



\bibitem{HRW} {\sc J. Hass, H.J. Rubinstein, S.C. Wang}, {\it
Boundary slopes of immersed surfaces in 3-manifolds.} J.
Differential Geom. 52 (1999), no. 2, 303C325.





\bibitem{He} {\sc J. Hempel}, {\it Residual finiteness for $3$3-manifolds. Combinatorial
group theory and topology} (Alta, Utah, 1984), 379C396, Ann. of
Math. Stud., 111, Princeton Univ. 1987.

\bibitem{Ho}{\sc G. Hochschild}, {\it The structure of Lie groups}, Holden-Day, Inc., San Francisco-London-Amsterdam 1965 ix+230 pp.



\bibitem{HM}{\sc J. A. Hoffman; D. Matignon}, {\it Examples of bireducible Dehn fillings}, Pacific Journal of Math., Vol. 209, No. 1 (2003),  67-83.

\bibitem{Ja} {\sc Jaco, W. H.}
    {\it Lectures on three-manifold topology,}
    Regional Conference Series in Mathematics 43, Amer. Math. Soc., Providence, RI, 1980.


\bibitem{JS} {\sc W. Jaco, P.B. Shalen},  {\it Seifert fibered space in
 3-manifolds},  Mem. Amer. Math. Soc.  21  (1979).



    \bibitem{Joh}  {\sc Johannson, K.} \emph{Homotopy equivalence of 3-manifolds with
boundary}, Lecture Notes in Math. 761, Springer-Verlag, Berlin,
1979.


\bibitem{Ki} {\sc R. Kirby}, \emph{Problems in low-dimensional topology},
Geometric topology, Edited by H.Kazez, AMS/IP, Vol. 2, International
Press, 1997.

\bibitem{Kh} {\sc Vu The Khoi}, {\it A cut-and-paste method for computing the Seifert volumes},  Math. Ann.  326  (2003),  no. 4, 759--801.

\bibitem{KK} {\sc P. Kirk, E. Klassen}, {\it Chern-Simons invariants of $3$-manifolds decomposed along tori and the circle bundle over the representation space of $T^2$},  Comm. Math. Phys.  153  (1993),  no. 3, 521--557.



\bibitem{KN} {\sc S. Kobayashi; K. Nomizu}, {\it Foundations of differential geometry}, Vol. I. and II.  Reprint of the 1963 original. Wiley Classics Library. A Wiley-Interscience Publication. John Wiley \& Sons, Inc., New York, 1996. xii+329 pp.


\bibitem{Lu} {\sc J. Luecke}, {\it Finite covers of $3$-manifolds containing essential tori}, Trans. Amer. Math. Soc. 310 (1988), no. 1, 381--391.

\bibitem{Mi} {\sc J. Milnor} {\it On the existence of a connection with curvature
zero.} Comment. Math. Helv. 32 1958 215--223.


\bibitem{NZ} {\sc W. Neumann; D. Zagier } {\it Volume of hyperbolic three-manifolds} Topology Vol.24, no. 3, 1985, 307-332.




\bibitem{Re1} {\sc A. Reznikov},
{\it Volumes of discrete groups and topological complexity of
homology spheres}, Math. Ann. 306 (1996), no. 3, 547--554.




\bibitem{Re2} {\sc A. Reznikov},
{\it Rationality of secondary classes},
J. Differential Geom. 43 (1996), no. 3, 674--692.

\bibitem{Re3} {\sc A. Resnikov}, \emph{Analytic topology}, Progress in Math.
Vol. 201, Birkhauser, 2002, 519-532.



\bibitem{SW} {\sc P, Samuel; H-c. Wang},  {\it Commutators in a semi-simple Lie group.} Proc.
Amer. Math. Soc. \text{13},  1962, 907-913.


\bibitem{Th1} {\sc W.P. Thurston},
{\it The geometry and topology of $3$-manifolds}, Lecture Notes, Princeton 1977.

\bibitem{Th2} {\sc W.P. Thurston}, \emph{Three dimensional manifolds,
Kleinian groups and hyperbolic
 geometry}, Bull. Amer. Math. Soc. \textbf{316}, 1982, 357--381.

\bibitem{V} {\sc W. Van Est}, {\it Une application d'une m\'ethode de
Cartan-Leray.} (French) Indag. Math. 17 (1955), 542--544.



\bibitem{WW} {\sc S.C. Wang, Y.Q Wu,} {\it Covering invariants and co-Hopficity
of $3$3-manifold groups.} Proc. London Math. Soc. (3) 68 (1994), no.
1, 203C224.



\bibitem{Wo} {\sc J. Wood,} Bundles with totally disconnected structure group.
Comment. Math. Helv. 46 (1971), 257--273.

\bibitem{Yo} {\sc T. Yoshida}, {\it The $\eta$-invariant of hyperbolic $3$-manifolds},  Invent. Math.  81  (1985),  no. 3, 473--514.















\end{thebibliography}
\end{document}